\crefname{equation}{}{}
\crefname{defn}{Definition}{Definitions}
\crefname{thm}{Theorem}{Theorems}
\crefname{lem}{Lemma}{Lemmas}
\definecolor{rewritecolor}{rgb}{0,.9,1}
\tikzset{rewritenode/.style={shape=circle,fill=rewritecolor,scale=0.25,font=\Huge}}
\tikzset{RWopen/.style={shape=circle,draw=black,fill=white,scale=0.5,font=\Huge}}
\tikzset{RWclosed/.style={shape=circle,fill=black,scale=0.5,font=\Huge}}
\tikzset{CDnode/.style={shape=circle,fill=white,scale=.5}}
\let\ea\expandafter
\definecolor{lblue}{rgb}{0,250,255}
\tikzstyle{species}=[circle,fill=yellow,draw=black,scale=1.15]
\tikzstyle{transition}=[rectangle,fill=lblue,draw=black,scale=1.15]
\tikzstyle{inarrow}=[->, >=stealth, shorten >=.03cm,line width=1.5]
\tikzstyle{empty}=[circle,fill=none, draw=none]
\tikzstyle{inputdot}=[circle,fill=purple,draw=purple, scale=.25]
\tikzstyle{inputarrow}=[->,draw=purple, shorten >=.05cm]
\tikzstyle{simple}=[-,draw=purple,line width=1.000]
\tikzstyle{none}=[inner sep=0pt]
\tikzset{->-/.style={decoration={
  markings,
  mark=at position .5 with {\arrow{>}}},postaction={decorate}}}
\def\mdef#1#2{\ea\ea\ea\gdef\ea\ea\noexpand#1\ea{\ea\ensuremath\ea{#2}}}
\def\alwaysmath#1{\ea\ea\ea\global\ea\ea\ea\let\ea\ea\csname your@#1\endcsname\csname #1\endcsname
  \ea\def\csname #1\endcsname{\ensuremath{\csname your@#1\endcsname}}}
\mdef\fahat{\hat{\fa}}
\newcommand{\op}{^{\mathrm{op}}}
\def\frc#1/#2.{\frac{#1}{#2}}   
\mdef\ten{\mathrel{\otimes}}
\newcommand{\simrightarrow}{\xrightarrow{\raisebox{-3pt}[0pt][0pt]{\ensuremath{\sim}}}}
\newcommand{\N}{\mathbb{N}}
\newcommand{\R}{\mathbb{R}}
\newcommand*{\graysquare}{\textcolor{lightgray}{\blacksquare}}
\newcommand{\To}{\Rightarrow}
\let\maps\colon
\def\defthm#1#2{%
  \newtheorem{#1}{#2}[section]%
  \expandafter\def\csname #1autorefname\endcsname{#2}%
  \expandafter\let\csname c@#1\endcsname\c@thm}
\newtheorem{thm}{Theorem}[section]
\newcommand{\fhat}{\ensuremath{\hat{f}}}
\mdef\fchk{\check{f}}
\newcommand{\upback}{\mathbin{\rotatebox[origin=c]{135}{$\ulcorner$}}}
\newcommand{\La}{\mathcal{L}}
\newcommand{\Set}{\mathsf{Set}}
\newcommand{\Graph}{\mathsf{Graph}}
\newcommand{\Petri}{\mathsf{Petri}}
\newcommand{\A}{\mathsf{A}}
\newcommand{\C}{\mathsf{C}}
\newcommand{\J}{\mathsf{J}}
\newcommand{\D}{\mathsf{D}}
\newcommand{\X}{\mathsf{X}}
\newcommand{\Fin}{\mathsf{Fin}}
\newcommand{\Csp}{\mathsf{Csp}}
\newcommand{\one}{\mathsf{1}}
\newcommand{\bicat}{\mathbf}
\newcommand{\Dbl}{\bicat{Dbl}}
\newcommand{\bCsp}{\bicat{Csp}}
\newcommand{\bA}{\bicat{A}}
\newcommand{\bB}{\bicat{B}}
\newcommand{\bD}{\bicat{D}}
\newcommand{\Cat}{\bicat{Cat}}
\newcommand{\MonCat}{\bicat{MonCat}}
\newcommand{\Rex}{\bicat{Rex}}
\newcommand{\SMC}{\bicat{SymMonCat}}
\newcommand{\OpFib}{\bicat{OpFib}}
\newcommand{\double}[1]{\mathbf{\mathbb #1}}
\newcommand{\lCsp}{\double{Csp}}
\newcommand{\Fr}{\double{Fr}}
\newcommand{\lD}{\double{D}}
\newcommand{\lE}{\double{E}}
\newcommand{\lF}{\double{F}}
\newcommand{\lG}{\double{G}}
\newcommand{\lH}{\double{H}}
\newcommand{\define}[1]{{\rm \textbf{#1}}}
\tikzset{tick/.style={postaction={decorate,decoration={markings,
mark=at position 0.4 with {\draw[-] (0,.4ex) -- (0,-.4ex);}}}}}
\newcommand{\inta}{\raisebox{.3\depth}{$\smallint\hspace{-.01in}$}}
\newcommand{\pse}{\mathrm{ps}}
\newcommand{\ot}{\otimes}
\newenvironment{adju}[1][0.925]{%
\begin{center}\begin{adjustbox}{max height=0.5\textheight, max width=#1\textwidth}}{\end{adjustbox}\end{center}}
\title{Structured Versus Decorated Cospans}
\author{John C. Baez}
\affiliation{Department of Mathematics, University of California, Riverside CA, USA 92521}
\affiliation{Centre for Quantum Technologies, National University of Singapore, Singapore 117543}
\email{baez@math.ucr.edu}
\author{Kenny Courser}
\affiliation{Department of Mathematics, University of California, Riverside CA, USA 92521}
\email{kcour001@ucr.edu}
\author{Christina Vasilakopoulou}
\affiliation{Department of Mathematics, University of Patras, Greece 265 04}
\email{cvasilak@math.upatras.gr}
\begin{document}

\begin{abstract}
One goal of applied category theory is to understand open systems.  We compare two ways of describing open systems as cospans equipped with extra data.    First, given a functor $L \maps \A \to \X$, a `structured cospan' is a diagram in $\X$ of the form $L(a) \rightarrow x \leftarrow L(b)$.  We give a new proof that if $\A$ and $\X$ have finite colimits and $L$ preserves them, there is a symmetric monoidal double category whose objects are those of $\A$ and whose horizontal 1-cells are structured cospans. Second, given a pseudofunctor $F \maps \A \to \Cat$, a `decorated cospan' is a diagram in $\A$ of the form $a \rightarrow m \leftarrow b$ together with an object of $F(m)$. Generalizing the work of Fong, we show that if $\A$ has finite colimits and $F \maps (\A,+) \to (\Cat,\times)$ is symmetric lax monoidal, there is a symmetric monoidal double category whose objects are those of $\A$ and whose horizontal 1-cells are decorated cospans.  We prove that under certain conditions, these two constructions become isomorphic when we take $\X = \inta F$ to be the Grothendieck category of $F$.  We illustrate these ideas with applications to electrical circuits, Petri nets, dynamical systems and epidemiological modeling.
\end{abstract}

\maketitle

\setcounter{tocdepth}{1}
\tableofcontents

\section{Introduction}

An `open system' is any sort of system that can interact with the outside world.  Experience has shown that open systems are nicely modeled using cospans \cite{CourserThesis, FongThesis, PollardThesis}. A cospan in some category $\A$ is a diagram of this form:
\[
\begin{tikzpicture}[scale=1.5]
\node (A) at (0,0) {$a$};
\node (B) at (1,1) {$m$};
\node (C) at (2,0) {$b$};
\path[->,font=\scriptsize,>=angle 90]
(A) edge node[above]{$i$} (B)
(C) edge node[above]{$o$} (B);
\end{tikzpicture}
\]
We call $m$ the \define{apex}, $a$ and $b$ the \define{feet}, and $i$ and $o$ the \define{legs} of the cospan.   The apex describes the system itself.  The feet describe `interfaces'  through which the system can interact with the outside world.  The legs describe how the interfaces are included in the system.   If the category $\A$ has finite colimits, we can compose cospans using pushouts: this describes the operation of attaching two open systems together in series by identifying one interface of the first with one of the second.  We can also `tensor' cospans using coproducts: this describes setting open systems side by side, in parallel.  Via these operations we obtain a symmetric monoidal double category
with cospans in $\A$ as its horizontal 1-cells \cite{Courser,Niefield}.

However, we often want the system itself to have more structure than its interfaces.   This led Fong to develop a theory of `decorated' cospans \cite{Fong}.  Given a category $\A$ with finite colimits, a symmetric lax monoidal functor $F \maps (\A,+) \to (\textsf{Set},\times)$ can be used to equip the apex $m$ of a cospan in $\A$ with some extra data: an element $s \in F(m)$, which we call a \textbf{decoration}.  Thus a \define{decorated cospan} is a pair:
\[
\begin{tikzpicture}[scale=1.5]
\node (A) at (0,0) {$a$};
\node (B) at (1,0) {$m$};
\node (C) at (2,0) {$b$,};
\node (E) at (4,0) {$s \in F(m)$.};
\path[->,font=\scriptsize,>=angle 90]
(A) edge node[above]{$i$} (B)
(C) edge node[above]{$o$} (B);
\end{tikzpicture}
\]
Fong proved that there is a symmetric monoidal category with objects
of $\A$ as its objects and equivalence classes of decorated cospans as its morphisms.  Such categories were used to describe a variety of open systems: electrical circuits, Markov processes, chemical reaction networks and dynamical systems \cite{BF,BFP,BP}.

Unfortunately, many applications of decorated cospans were flawed.  The problem is that while Fong's decorated cospans are good for decorating the apex $m$ with an element of a set $F(m)$, they are unable to decorate it with an object of a category.   An example would be equipping a finite set $m$ with edges making its elements into the nodes of a graph.    We would like the following `open graph' to be a decorated cospan where the apex is the finite set $m = \{n_1, n_2, n_3, n_4\}$:
\[
\scalebox{0.8}{
\begin{tikzpicture}
	\begin{pgfonlayer}{nodelayer}
		\node [contact] (n1) at (-2,0) {$\bullet$};
		\node [style = none] at (-2.1,0.3) {$n_1$};
		\node [contact] (n2) at (0,1) {$\bullet$};
		\node [style = none] at (0,1.3) {$n_2$};
		\node [contact] (n3) at (0,-1) {$\bullet$};
		\node [style = none] at (0,-1.3) {$n_3$};
		\node [contact] (n4) at (2,0) {$\bullet$};
		\node [style = none] at (2.1,0.3) {$n_4$};

		\node [style = none] at (-1,1) {$e_1$};
		\node [style = none] at (-1,-1) {$e_2$};
		\node [style = none] at (1,1) {$e_3$};
		\node [style = none] at (1,-1) {$e_4$};
	    \node [style = none] at (0.3,0) {$e_5$};

		\node [style=none] (1) at (-3,0) {1};
		\node [style=none] (4) at (3,0) {2};

		\node [style=none] (ATL) at (-3.4,1.4) {};
		\node [style=none] (ATR) at (-2.6,1.4) {};
		\node [style=none] (ABR) at (-2.6,-1.4) {};
		\node [style=none] (ABL) at (-3.4,-1.4) {};

		\node [style=none] (X) at (-3,1.8) {$a$};
		\node [style=inputdot] (inI) at (-2.8,0) {};

		\node [style=none] (Z) at (3,1.8) {$b$};
	 \node [style=inputdot] (outI') at (2.8,0) {};

		\node [style=none] (MTL) at (2.6,1.4) {};
		\node [style=none] (MTR) at (3.4,1.4) {};
		\node [style=none] (MBR) at (3.4,-1.4) {};
		\node [style=none] (MBL) at (2.6,-1.4) {};

	\end{pgfonlayer}
	\begin{pgfonlayer}{edgelayer}
		\draw [style=inarrow, bend left=20, looseness=1.00] (n1) to (n2);
		\draw [style=inarrow, bend right=20, looseness=1.00] (n1) to (n3);
		\draw [style=inarrow, bend left=20, looseness=1.00] (n2) to (n4);
		\draw [style=inarrow, bend right=20, looseness=1.00] (n3) to (n4);
		\draw [style=inarrow] (n2) to (n3);
		\draw [style=simple] (ATL.center) to (ATR.center);
		\draw [style=simple] (ATR.center) to (ABR.center);
		\draw [style=simple] (ABR.center) to (ABL.center);
		\draw [style=simple] (ABL.center) to (ATL.center);
		\draw [style=simple] (MTL.center) to (MTR.center);
		\draw [style=simple] (MTR.center) to (MBR.center);
		\draw [style=simple] (MBR.center) to (MBL.center);
		\draw [style=simple] (MBL.center) to (MTL.center);
		\draw [style=inputarrow] (inI) to (n1);
		\draw [style=inputarrow] (outI') to (n4);
	\end{pgfonlayer}
\end{tikzpicture}
}
\]
We might hope to do this using a symmetric lax monoidal functor $F \maps (\Fin\Set, +) \to (\Set, \times)$ assigning to each finite set $m$ the set of all graphs with $m$ as their set of nodes. But this hope is doomed, for reasons painstakingly explained in \cite[Section 5]{BC}.   The key problem is that two graphs with $m$ as their set of nodes but different sets of edges give distinct elements of $F(m)$, and this prevents $F$ from being symmetric lax monoidal.   To solve this problem, we need to bring isomorphisms of graphs into the framework---so we need $F(m)$ to be a \emph{category} of graphs with $m$ as their set of nodes.

Here we implement this solution.  Instead of basing the theory of decorated cospans on a symmetric lax monoidal functor $F \maps (\A, +) \to (\Set, \times)$, we use a symmetric lax monoidal pseudofunctor $F \maps (\A, +) \to (\Cat, \times)$.  In \cref{thm:decorated_cospans_1,thm:decorated_cospans_2}, we use this
data to construct a symmetric monoidal double category $F\lCsp$ in which:
\begin{itemize}
\item an object is an object of $\A$,
\item a vertical 1-morphism is a morphism of $\A$,
\item a horizontal 1-cell from $a$ to $b$ is a decorated cospan:
\[
\begin{tikzpicture}[scale=1.5]
\node (A) at (0,0) {$a$};
\node (B) at (1,0) {$m$};
\node (C) at (2,0) {$b,$};
\node (D) at (3,0) {$s \in F(m)$,};
\path[->,font=\scriptsize,>=angle 90]
(A) edge node[above]{$i$} (B)
(C) edge node[above]{$o$} (B);
\end{tikzpicture}
\]
\item a 2-morphism is a \define{map of decorated cospans}: that is, a commutative
diagram
\[
\begin{tikzpicture}[scale=1.5]
\node (A) at (0,0.5) {$a$};
\node (A') at (0,-0.5) {$a'$};
\node (B) at (1,0.5) {$m$};
\node (C) at (2,0.5) {$b$};
\node (C') at (2,-0.5) {$b'$};
\node (D) at (1,-0.5) {$m'$};
\node (E) at (3,0.5) {$s \in F(m)$};
\node (F) at (3,-0.5) {$s' \in F(m')$};
\path[->,font=\scriptsize,>=angle 90]
(A) edge node[above]{$i$} (B)
(C) edge node[above]{$o$} (B)
(A) edge node[left]{$f$} (A')
(C) edge node[right]{$g$} (C')
(A') edge node[above] {$i'$} (D)
(C') edge node[above] {$o'$} (D)
(B) edge node [left] {$h$} (D);
\end{tikzpicture}
\]
together with a morphism $\tau \maps F(h)(s) \to s'$ in $F(m')$.
\end{itemize}

In fact another solution to the problem is already known: the theory of structured cospans \cite{BC,CourserThesis}.  Given a functor $L \maps \A \to \X$, a \define{structured cospan} is a cospan in $\X$ whose feet come from a pair of objects in $\A$:
\[
\begin{tikzpicture}[scale=1.2]
\node (A) at (0,0) {$L(a)$};
\node (B) at (1,1) {$x$};
\node (C) at (2,0) {$L(b).$};
\path[->,font=\scriptsize,>=angle 90]
(A) edge node[above]{$$} (B)
(C)edge node[above]{$$}(B);
\end{tikzpicture}
\]
This is another way of letting the apex have more structure than the feet.   When $\A$ and $\X$ have finite colimits and $L$ preserves them, there is a symmetric monoidal double category ${}_L \lCsp(\X)$ where:
\begin{itemize}
\item an object is an object of $\A$,
\item a vertical 1-morphism is a morphism of $\A$,
\item a horizontal 1-cell from $a$ to $b$ is a diagram in $\X$ of this form:
\[
\begin{tikzpicture}[scale=1.5]
\node (A) at (0,0) {$L(a)$};
\node (B) at (1,0) {$x$};
\node (C) at (2,0) {$L(b)$};
\path[->,font=\scriptsize,>=angle 90]
(A) edge node[above]{$i$} (B)
(C)edge node[above]{$o$}(B);
\end{tikzpicture}
\]
\item a 2-morphism is a commutative diagram in $\X$ of this form:
\[
\begin{tikzpicture}[scale=1.5]
\node (E) at (3,0) {$L(a)$};
\node (F) at (5,0) {$L(b)$};
\node (G) at (4,0) {$x$};
\node (E') at (3,-1) {$L(a')$};
\node (F') at (5,-1) {$L(b')$};
\node (G') at (4,-1) {$x'$};
\path[->,font=\scriptsize,>=angle 90]
(F) edge node[above]{$o$} (G)
(E) edge node[left]{$L(f)$} (E')
(F) edge node[right]{$L(g)$} (F')
(G) edge node[left]{$\alpha$} (G')
(E) edge node[above]{$i$} (G)
(E') edge node[below]{$i'$} (G')
(F') edge node[below]{$o'$} (G');
\end{tikzpicture}
\]
\end{itemize}

Many of the flawed applications of decorated cospans have been fixed using structured cospans \cite[Section 6]{BC}, but not every decorated cospan double category is equivalent to a structured cospan double category.   Here we give sufficient conditions for a decorated cospan double category to be equivalent---and in fact, isomorphic---to a structured cospan double category.

Suppose $\A$ has finite colimits and $F \maps (\A , +) \to (\Cat, \times)$ is a symmetric lax monoidal pseudofunctor.  Then each category $F(a)$ for $a \in A$ becomes symmetric monoidal, and $F$ becomes a pseudofunctor $F \maps \A \to \SMC$.    Using the Grothendieck construction, $F$ also gives an opfibration $U \maps \X \to \A$ where $\X = \inta F$.   Let $\Rex$ be the 2-category of categories with finite colimits, functors preserving finite colimits, and natural transformations.  We show that if $F \maps \A \to \SMC$ factors through $\Rex$ as a pseudofunctor, the opfibration $U \maps \X \to \A$ is also a right adjoint.   From the accompanying left adjoint $L \maps \A \to \X$, we  construct a symmetric monoidal double category ${}_L \lCsp(\X)$ of structured cospans.  In \cref{thm:equiv} we prove that this structured cospan double category ${}_L \lCsp(\X)$ is isomorphic to the decorated cospan double category $F \lCsp$.   In fact, they are isomorphic as symmetric monoidal double categories.

This result shows that under certain conditions, structured and decorated cospans provide equivalent ways of describing open systems.  We illustrate this in Section \ref{Applications} with applications to graphs, electrical circuits, Markov processes, Petri nets, Petri nets with rates, and dynamical systems.   This is meant to be a fairly thorough review of the existing literature.   It becomes clear that when either structured or decorated cospans can be used, structured cospans are simpler.  However, in some cases we need decorated cospans, for reasons we explain.

\subsection*{Outline}

In \cref{DecCospansDoubleCat} we construct the double category of decorated cospans, $F\lCsp$, and show how to construct maps between decorated cospan double categories. In \cref{Structured} we give a new construction of the double category of structured cospans, ${}_L \lCsp(\X)$.  In \cref{EquivDoubleCats} we prove that the double categories of decorated cospans and structured cospans are isomorphic under suitable conditions.  In \cref{spinoffs} we establish the isomorphism between structured and decorated cospans at the level of bicategories and categories (via decategorification).  In \cref{Applications} we describe applications.

\subsection*{Conventions}

In this paper, we use a sans-serif font like $\C$ for categories, boldface like $\mathbf{B}$ for bicategories or 2-categories, and blackboard bold like $\lD$ for double categories. For double categories with names having more than one letter, like $\lCsp(\X)$, only the first letter is in
blackboard bold. In this paper, `double category' means `pseudo double category', as in \cref{defn:double_category}. A double category $\lD$ has a category of objects and a category of arrows, and we call these $\lD_0$ and $\lD_1$ despite the fact that they are categories. Vertical composition in our double categories is strictly associative, while horizontal composition need not be.  We use $(\C,\otimes)$ to stand for a monoidal or perhaps symmetric monoidal category with $\otimes$ as its tensor product.  For composition or morphisms we use concatenation or occasionally $\circ$.

\subsection*{Acknowledgements}

We thank Daniel Cicala, Brendan Fong, Sophie Libkind, Joe Moeller and Morgan Rogers for helpful conversations, and the referees for their many useful suggestions.  We especially thank Michael Shulman for suggesting a proof strategy that greatly improved this paper.   The third author would like to thank the General Secretariat for Research  and Technology (GSRT) and the Hellenic Foundation for Research and Innovation (HFRI).

\section{Decorated cospans}\label{DecCospansDoubleCat}

In this section we build symmetric monoidal double categories of decorated cospans, and then study the functoriality of this construction.
\cref{thm:decorated_cospans_1} explains how to construct a double category of decorated cospans from a lax monoidal pseudofunctor $F \maps (\A,+) \to
(\Cat, \times)$ whenever $\A$ has finite colimits.  \cref{thm:decorated_cospans_2} gives conditions under which this double category is symmetric
monoidal.   These results build on earlier work of Fong \cite{Fong}, and we recommend his thesis for intuitive explanations of the fundamental ideas
\cite{FongThesis}. For concrete examples of the structures that follow, we refer the reader to \cref{Applications}.

In all that follows, when we say a category `has finite colimits' we mean it is equipped with a choice of colimit for every finite diagram.   Thus, if
$\A$ has finite colimits it gives a cocartesian monoidal category $(\A,+)$: that is, a symmetric monoidal category where the monoidal structure is
given by the chosen binary coproducts and initial object.   However, when we say a functor `preserves finite colimits', it need only do this up to
canonical isomorphism, unless otherwise specified.   We recall the concept of lax monoidal pseudofunctor in \cref{subsec:bicats}.

\begin{thm}\label{thm:decorated_cospans_1}
Let $\A$ be a category with finite colimits and $(F,\phi,\phi_0) \maps (\A,+) \to (\Cat,\times)$ a lax monoidal pseudofunctor. Then there exists a double category $F\lCsp$ in which
\begin{itemize}
\item an object is an object of $\A$,
\item a vertical 1-morphism is a morphism of $\A$,
\item a horizontal 1-cell is an $F$\define{-decorated cospan}, that is,
a diagram in $\A$ of the form
\[
\begin{tikzpicture}[scale=1.5]
\node (A) at (0,0) {$a$};
\node (B) at (1,0) {$m$};
\node (C) at (2,0) {$b,$};
\path[->,font=\scriptsize,>=angle 90]
(A) edge node[above]{$i$} (B)
(C) edge node[above]{$o$} (B);
\end{tikzpicture}
\]
together with a \define{decoration} $s \in F(m)$,
\item a 2-morphism is a \define{map of} $F$\define{-decorated cospans}, that is,
a commutative diagram in $\A$ of the form
\[
\begin{tikzpicture}[scale=1.5,baseline=(current bounding box.center)]
\node (A) at (0,0.5) {$a$};
\node (A') at (0,-0.5) {$a'$};
\node (B) at (1,0.5) {$m$};
\node (C) at (2,0.5) {$b$};
\node (C') at (2,-0.5) {$b'$};
\node (D) at (1,-0.5) {$m'$};
\node (E) at (3,0.5) {$s \in F(m)$};
\node (F) at (3,-0.5) {$s' \in F(m')$};
\path[->,font=\scriptsize,>=angle 90]
(A) edge node[above]{$i$} (B)
(C) edge node[above]{$o$} (B)
(A) edge node[left]{$f$} (A')
(C) edge node[right]{$g$} (C')
(A') edge node[below] {$i'$} (D)
(C') edge node[below] {$o'$} (D)
(B) edge node [left] {$h$} (D);
\end{tikzpicture}
\]
together with a \define{decoration morphism} $\tau \maps F(h)(s) \to s'$ in $F(m')$, which can be thought of as a natural transformation
\begin{equation}\label{eq:taunattransf}
\begin{tikzpicture}[baseline=(current  bounding  box.center),scale=1.5]
\node (G) at (-3.5,-0.5) {$1$};
\node (H) at (-2,0) {$F(m)$};
\node (I) at (-2,-1) {$F(n)$};
\node (J) at (-2.5,-0.5) {$\scriptstyle\Downarrow\tau$};
\path[->,font=\scriptsize,>=angle 90]
(G) edge node [above] {$s$} (H)
(G) edge node [below] {$s'$} (I)
(H) edge node [right] {$F(h)$} (I);
\end{tikzpicture}
\end{equation}
\end{itemize}
\end{thm}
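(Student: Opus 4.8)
The plan is to construct $F\lCsp$ by hand as a pseudo double category in the sense of \cref{defn:double_category}, reusing the coherence already packaged in the double category $\lCsp(\A)$ of ordinary cospans in $\A$ \cite{Courser,Niefield} together with the Grothendieck construction. Set $F\lCsp_0 := \A$, so that objects and vertical $1$-morphisms are exactly objects and morphisms of $\A$, and take the category of arrows to be the Grothendieck construction $F\lCsp_1 := \inta(F \circ \mathrm{Apex})$ of the pseudofunctor $\lCsp(\A)_1 \xrightarrow{\ \mathrm{Apex}\ } \A \xrightarrow{\ F\ } \Cat$, where $\mathrm{Apex}$ sends a cospan to its apex and a map of cospans to its middle leg $h$. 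Unwinding definitions, an object of $F\lCsp_1$ is exactly an $F$-decorated cospan, and a morphism is exactly a map of $F$-decorated cospans equipped with a decoration morphism $\tau \maps F(h)(s) \to s'$; moreover $F\lCsp_1$ is automatically a category, since the Grothendieck construction of any pseudofunctor is. Let $S, T \maps F\lCsp_1 \to F\lCsp_0$ be the composites of the forgetful projection $F\lCsp_1 \to \lCsp(\A)_1$ with the source and target functors of $\lCsp(\A)$. Because $F\lCsp_0$ and $F\lCsp_1$ are honest categories, vertical composition in $F\lCsp$ is automatically strictly associative and unital; it remains to build the horizontal structure.

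Next I would supply the unit functor $U \maps F\lCsp_0 \to F\lCsp_1$ and the composition functor $\odot \maps F\lCsp_1 \times_{F\lCsp_0} F\lCsp_1 \to F\lCsp_1$, arranged so as to lie over the unit and composition-by-pushout of $\lCsp(\A)$; then the only genuinely new content sits in the decorations and is governed entirely by the lax structure maps $\phi$, $\phi_0$ of $F$. The unit $1$-cell on $a$ is $a \xrightarrow{\id} a \xleftarrow{\id} a$ decorated by $F(!_a)(\phi_0) \in F(a)$, with $!_a \maps 0 \to a$ the unique map out of the initial object; since $f \circ {!_a} = {!_b}$ for every $f \maps a \to b$, pseudofunctoriality of $F$ makes $U$ an honest functor. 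The composite of $(a \xrightarrow{i} m \xleftarrow{o} b, s)$ with $(b \xrightarrow{i'} n \xleftarrow{o'} c, t)$ is the usual cospan composite $a \to m +_b n \leftarrow c$, with apex the chosen pushout, decorated by $F\bigl([\iota_m,\iota_n]\bigr)\bigl(\phi_{m,n}(s,t)\bigr) \in F(m +_b n)$, where $\iota_m,\iota_n$ are the pushout coprojections and $[\iota_m,\iota_n] \maps m + n \to m +_b n$ their copairing; on decoration morphisms $\odot$ uses naturality of $\phi$ to rewrite $F(h+k)\,\phi_{m,n}(s,t)$ as $\phi_{m',n'}\bigl(F(h)(s),F(k)(t)\bigr)$ and then applies $\phi_{m',n'}(\tau,\sigma)$. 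Functoriality of $\odot$ --- which in particular contains the interchange law --- is a routine check against the naturality squares for $\phi$ and the pseudofunctor axioms for $F$.

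For the coherence data, the associator of $F\lCsp$ at a composable triple of decorated cospans sits over the canonical isomorphism of chosen iterated pushouts in $\lCsp(\A)$, and on decorations it is assembled from the compositor isomorphisms of the pseudofunctor $F$ and the associativity constraint of $\phi$; the left and right unitors are built likewise from the pseudofunctor structure and the lax unit constraints. Globularity and naturality of all of these $2$-cells are immediate from their definitions.

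The step I expect to be the main obstacle is the verification of the pseudo double category axioms for $F\lCsp$ --- chiefly the pentagon identity for the associator and the triangle identity linking the associator with the unitors. Each such axiom is an equation of $2$-morphisms of $F\lCsp$, and it projects to the corresponding, already-known, equation in $\lCsp(\A)$; so it is enough to check the ``decoration component'', which is a diagram of coherence isomorphisms built from the compositors of $F$ and the constraints of $\phi$. That diagram commutes by the coherence axioms of a lax monoidal pseudofunctor recalled in \cref{subsec:bicats}, once one also uses that comparison maps into the chosen (iterated) pushouts are uniquely determined. This will be lengthy but entirely mechanical: because the structure has been packaged through the Grothendieck construction, nothing needs to be verified in the ``$\A$-direction'' beyond what $\lCsp(\A)$ already provides, so the work reduces to bookkeeping with $F$'s own coherence data.
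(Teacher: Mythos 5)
Your construction is correct in outline, but it takes a genuinely different route from the paper.  The paper proves \cref{thm:decorated_cospans_1} by constructing a Beck--Chevalley monoidal bifibration $\Phi \maps \A/U \to \A$ (where $U \maps \inta F \to \A$ is the Grothendieck opfibration of $F$), verifying the cartesian/cocartesian lifting and Beck--Chevalley conditions for $\Phi$, and then invoking Shulman's \cref{lem:shulman} to obtain $F\lCsp = \Fr(\Phi)$ as a package.  You instead build the double category directly: you take $F\lCsp_1$ to be the Grothendieck construction of $F \circ \mathrm{Apex}$ on the arrow category of $\lCsp(\A)$, define horizontal unit and composition lifting those of $\lCsp(\A)$ via $\phi$, $\phi_0$, and the pushout comparison $\psi = [\iota_m,\iota_n]$, and propose to verify functoriality of $\odot$, the pentagon, and the triangle by hand.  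Both routes use the Grothendieck construction, but in different places; yours at the level of the arrow category, the paper's at the level of the opfibration over $\A$.  What the paper buys with Shulman's machinery is that all coherence (pentagon, triangle, interchange) comes for free from \cref{lem:shulman}, at the cost of verifying the bifibration hypotheses (which the paper does carefully, including the Beck--Chevalley pasting argument).  Your approach is more elementary and self-contained, but the burden shifts to exactly the coherence checks you flag as the ``main obstacle''; these would be lengthy, and you should be careful that what you call ``naturality of $\phi$'' is really pseudonaturality (an iso $\phi_{h,k}$ as in \cref{eq:pseudonaturality}, not an equality), and that the decoration component of the associator also involves $F$ applied to the canonical comparison between chosen iterated pushouts, not only the constraints of $F$ and $\phi$.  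Your observations that $F\lCsp_1$ is automatically a category and that $U$ is an honest functor (by $f \circ {!_a} = {!_b}$ and pseudofunctor coherence) are both correct and are exactly the kind of bookkeeping that makes a direct proof feasible.
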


Note that the decoration $s \in F(m)$ is now an object in the category $F(m)$, not an element of a set as in Fong's original approach.  Vertical composition in $F\lCsp$ is done using composition in $\A$.  The horizontal composite of $F$-decorated cospans
\[ \left(a\to m\leftarrow b,s\in F(m)\right), \qquad \left(b\to n\leftarrow c,t\in F(n)\right)\]
is the usual composite via pushout of their underlying cospans, shown in dashed arrows here:
\[
\begin{tikzpicture}[scale=1.2,baseline=(current bounding box.center)]
\node (A) at (0,0) {$a$};
\node (B) at (1,0.8) {$m$};
\node (C) at (2,0) {$b$};
\node (D) at (3,0.8) {$n$};
\node (E) at (4,0) {$c$};
\node (F) at (2,1.6) {$m+n$};
\node (G) at (2,2.6) {$m+_{b} n$};
\path[->,font=\scriptsize,>=angle 90]
(A) edge node[above]{$i$} (B)
(C) edge node[above]{$o$} (B)
(C) edge node [above] {$i'$} (D)
(E) edge node [above] {$o'$} (D)
(B) edge node [below] {$$} (F)
(D) edge node [below] {$$} (F)
(F) edge node [left] {$\psi$} (G)
(A) edge[dashed,bend left] node [left] {} (G)
(E) edge[dashed,bend right] node [right] {} (G);
\end{tikzpicture}
\]
together with the decoration $t \odot s \in F(m +_b n)$ specified by this functor:
\[  \one\cong\one \times \one \xrightarrow{s \, \times \, t} F(m) \times F(n) \xrightarrow{\phi_{m,n}} F(m+n) \xrightarrow{F(\psi)} F(m +_{b}n). \]
Given two horizontally composable maps of $F$-decorated cospans $\alpha$ and $\beta$:
\[
\begin{tikzpicture}[scale=1.5]
\node (A) at (0,0.5) {$a$};
\node (A') at (0,-0.5) {$a'$};
\node (B) at (1,0.5) {$m$};
\node (C) at (2,0.5) {$b$};
\node (C') at (2,-0.5) {$b'$};
\node (D) at (1,-0.5) {$m'$};
\node (E) at (3,0.5) {$s \in F(m)$};
\node (F) at (3,-0.5) {$s' \in F(m')$};
\node (G) at (4,0.5) {$b$};
\node (H) at (5,0.5) {$n$};
\node (I) at (6,0.5) {$c$};
\node (G') at (4,-0.5) {$b'$};
\node (H') at (5,-0.5) {$n'$};
\node (I') at (6,-0.5) {$c'$};
\node (J) at (7,0.5) {$t \in F(n)$};
\node (K) at (7,-0.5) {$t' \in F(n')$};
\node (L) at (1,-1) {$\tau_\alpha \maps F(h_1)(s) \to s'$};
\node (M) at (5,-1) {$\tau_\beta \maps F(h_2)(t) \to t'$};
\path[->,font=\scriptsize,>=angle 90]
(A) edge node[above]{$i_1$} (B)
(C) edge node[above]{$o_1$} (B)
(A) edge node[left]{$f$} (A')
(C) edge node[right]{$g$} (C')
(A') edge node[above] {$i_1'$} (D)
(C') edge node[above] {$o_1'$} (D)
(B) edge node [left] {$h_1$} (D)
(G) edge node [above] {$i_2$} (H)
(G) edge node [left] {$g$} (G')
(H) edge node [left] {$h_2$} (H')
(G') edge node [above] {$i_2'$} (H')
(I) edge node [above] {$o_2$} (H)
(I) edge node [right] {$k$} (I')
(I') edge node [above] {$o_2'$} (H');
\end{tikzpicture}
\]
their composite $\beta\odot \alpha$ is the horizontal composite of the two maps of cospans in $\A$:
\[
\begin{tikzpicture}[scale=1.5,baseline=(current bounding box.center)]
\node (A) at (0,0.5) {$a$};
\node (A') at (0,-0.5) {$a'$};
\node (B) at (1.5,0.5) {$m+_{b} n$};
\node (C) at (3,0.5) {$c$};
\node (C') at (3,-0.5) {$c'$};
\node (D) at (1.5,-0.5) {$m'+_{b'}n'$};
\node (E) at (4.5,0.5) {$t \odot s \in F(m+_b n)$};
\node (F) at (4.5,-0.5) {$t' \odot s' \in F(m' +_{b'} n')$};
\path[->,font=\scriptsize,>=angle 90]
(A) edge node[above]{} (B)
(C) edge node[above]{} (B)
(A) edge node[left]{$f$} (A')
(C) edge node[right]{$k$} (C')
(A') edge node [above]{} (D)
(C') edge node [above]{} (D)
(B) edge node [left] {$h_1 +_g h_2$} (D);
\end{tikzpicture}
\]
together with the decoration morphism $\tau_{\beta \odot \alpha} \maps F(h_1 +_g h_2)(t \odot s) \to (t' \odot s')$ specified by this natural transformation:
\[
\begin{tikzpicture}[scale=1.7,baseline=(current bounding box.center)]
\node (A) at (4.75,0) {$\scriptstyle\Downarrow \tau_\alpha \times \tau_\beta$};
\node (D) at (3,0) {$\one \cong \one \times \one$};
\node (E) at (5.5,0.5) {$F(m) \times F(n)$};
\node () at (6.5,0) {$\stackrel{\phi_{h_1,h_2}}{\cong}$};
\node () at (8.5,0) {$\cong$};
\node (E') at (5.5,-0.5) {$F(m') \times F(n')$};
\node (B) at (7.5,0.5) {$F(m+n)$};
\node (B') at (7.5,-0.5) {$F(m' + n')$};
\node (C) at (9.5,0.5) {$F(m+_{b} n)$};
\node (C') at (9.5,-0.5) {$F(m' +_{b'} n')$};
\path[->,font=\scriptsize,>=angle 90]
(E) edge node [above] {$\phi_{m,n}$} (B)
(E') edge node [above] {$\phi_{m',n'}$} (B')
(B) edge node [above] {$F(\psi)$} (C)
(B') edge node [above] {$F(\psi)$} (C')
(C) edge node  [fill=white] {$F(h_1 +_g h_2)$} (C')
(B) edge node [fill=white] {$F(h_1 + h_2)$} (B')
(D) edge node [above] {$s \times t$} (E)
(D) edge node [below] {$s' \times t'$} (E')
(E) edge node [fill=white] {$F(h_1) \times F(h_2)$} (E');
\end{tikzpicture}
\]
where the middle isomorphism is \cref{eq:pseudonaturality} from the pseudonaturality of $\phi$ and the right-hand isomorphism comes from the pseudofunctoriality of $F$.

\begin{thm}\label{thm:decorated_cospans_2}
Let $\A$ be a category with finite colimits and let $(F,\phi,\phi_0)\maps (\A,+) \to (\Cat,\times)$ be a symmetric lax monoidal pseudofunctor. Then the double category $F\lCsp$ of \cref{thm:decorated_cospans_1} is symmetric monoidal, where the tensor product
\begin{itemize}
\item of two objects $a$ and $b$ is their coproduct $a+b$ in $\A$,
\item of two vertical 1-morphisms $f \maps a \to b$ and $f' \maps a' \to b'$ is $f+f' \maps a+a' \to b+b'$ in $\A$,
\item of two horizontal 1-cells $(a \xrightarrow{i_1} m \xleftarrow{o_1} b, s \in F(m))$ and $(c \xrightarrow{i_2} n \xleftarrow{o_2} d, t \in F(n))$ is
\[
\begin{tikzcd}
& m+n & \\
a+c\ar[ur,"i_1+i_2"] && b+d,\ar[ul,"o_1+o_2"']
\end{tikzcd}\qquad s \otimes t:=  \phi_{m,n}(s,t) \in F(m + n)
\]
\item of two 2-morphisms $\alpha$ and $\beta$ is:
\begin{adju}
\begin{tikzpicture}[scale=1.2]
\node (A) at (0,0.5) {$a$};
\node (A') at (0,-0.5) {$a'$};
\node (B) at (1,0.5) {$m$};
\node (C) at (2,0.5) {$b$};
\node (C') at (2,-0.5) {$b'$};
\node (D) at (1,-0.5) {$m'$};
\node () at (2.7,0) {$\otimes$};
\node () at (6.2,0) {$=$};
\node (G) at (3.5,0.5) {$c$};
\node (H) at (4.5,0.5) {$n$};
\node (I) at (5.5,0.5) {$d$};
\node (G') at (3.5,-0.5) {$c'$};
\node (H') at (4.5,-0.5) {$n'$};
\node (I') at (5.5,-0.5) {$d'$};
\node (L) at (1,-1.2) {$\scriptstyle\tau_\alpha \maps F(h)(s) \to s' \textrm{ in } F(m')$};
\node (M) at (4.5,-1.2) {$\scriptstyle\tau_\beta \maps F(h')(t) \to t'\textrm{ in } F(n')$};
\path[->,font=\scriptsize,>=angle 90]
(A) edge node[above]{$i_1$} (B)
(C) edge node[above]{$o_1$} (B)
(A) edge node[left]{$f$} (A')
(C) edge node[right]{$g$} (C')
(A') edge node[below] {$i_1'$} (D)
(C') edge node[below] {$o_1'$} (D)
(B) edge node [left] {$h$} (D)
(G) edge node [above] {$i_2$} (H)
(G) edge node [left] {$f'$} (G')
(H) edge node [left] {$h'$} (H')
(G') edge node [below] {$i_2'$} (H')
(I) edge node [above] {$o_2$} (H)
(I) edge node [right] {$g'$} (I')
(I') edge node [below] {$o_2'$} (H');
\end{tikzpicture}
\begin{tikzpicture}[scale=1.1]
\node (A) at (0,0.5) {$a+c$};
\node (A') at (0,-0.5) {$a'+c'$};
\node (B) at (2,0.5) {$m+n$};
\node (C) at (4,0.5) {$b+d$};
\node (C') at (4,-0.5) {$b'+d'$};
\node (D) at (2,-0.5) {$m'+n'$};
\node () at (2,-1.3) {$\scriptstyle\tau_{\alpha\ot\beta}\maps F(h+h')(\phi_{m,n}(s,t))\to \phi_{m',n'}(s',t')\textrm{ in }F(m'+n')$};
\path[->,font=\scriptsize,>=angle 90]
(A) edge node[above]{$i_1+i_2$} (B)
(C) edge node[above]{$o_1+o_2$} (B)
(A) edge node[left]{$f+f'$} (A')
(C) edge node[right]{$g+g'$} (C')
(A') edge node [below]{$i_1'+i_2'$} (D)
(C') edge node [below]{$o_1'+o_2'$} (D)
(B) edge node [left] {$h+h'$} (D);
\end{tikzpicture}
\end{adju}
with decoration morphism $\tau_{\alpha\otimes\beta}$ given by the following diagram:
\begin{displaymath}
 \begin{tikzcd}[column sep=.5in, row sep=.1in]
& F(m)\times F(n)\ar[r,"\phi_{m,n}"]\ar[dd,"F(h)\times F(h')"description]\ar[ddr,phantom,"{\stackrel[\cref{eq:pseudonaturality}]{\phi_{h,h'}}{\cong}}"] & F(m+n)\ar[dd,"F(h+h')"] \\
\one \ar[ur,"s\,\times \, t"]\ar[dr,"s' \,\times \, t' "']\ar[r,phantom,"\scriptstyle\Downarrow\tau_\alpha\times\tau_\beta"] & \phantom{A} && \\
& F(m')\times F(n')\ar[r,"\phi_{m',n'}"'] & F(m'+n')
 \end{tikzcd}
\end{displaymath}
\end{itemize}
\end{thm}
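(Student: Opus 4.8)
The plan is to avoid re-deriving the coherence of a symmetric monoidal double category from scratch and instead invoke the standard packaging of such a structure: a symmetric monoidal (pseudo) double category is precisely a symmetric pseudomonoid in the cartesian 2-category $\Dbl$ of pseudo double categories, pseudo double functors and their transformations (following Shulman). Unwinding this in the present setting, what must be produced is (i) symmetric monoidal structures on the two categories $(F\lCsp)_0 = \A$ and $(F\lCsp)_1$, (ii) the fact that the source and target functors $S,T \maps (F\lCsp)_1 \to \A$ are \emph{strict} symmetric monoidal, (iii) the data making the identity-assigning functor $U \maps \A \to (F\lCsp)_1$ and the external composition functor $\odot \maps (F\lCsp)_1 \times_{\A} (F\lCsp)_1 \to (F\lCsp)_1$ symmetric monoidal, and (iv) the coherence axioms relating all of this. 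Since $(F\lCsp)_0 = \A$ carries the cocartesian monoidal structure $(\A,+)$ already in force in \cref{thm:decorated_cospans_1}, the first half of (i) is immediate, and (ii) holds on the nose because the tensor of decorated cospans acts on feet by $(a,c)\mapsto a+c$ and $(b,d)\mapsto b+d$, which are exactly the coproducts in $\A$.

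The substantive half of (i) is the symmetric monoidal structure on $(F\lCsp)_1$. I would take the tensor of horizontal 1-cells and of maps of decorated cospans exactly as displayed in the statement, with $\tau_{\alpha\otimes\beta}$ the pasting of the final diagram. Functoriality of this assignment follows from functoriality of $+$ on $\A$-morphisms together with the pseudonaturality of the comparison cells $\phi_{m,n}$, and the well-definedness of $\tau_{\alpha\otimes\beta}$ uses \cref{eq:pseudonaturality} and the pseudofunctoriality isomorphisms of $F$. The associator, unitors and braiding of $(F\lCsp)_1$ are assembled from those of $(\A,+)$ on the cospan part and from the associativity, unit and symmetry constraints of the symmetric lax monoidal pseudofunctor $(F,\phi,\phi_0)$ on the decoration part; every resulting structure 2-cell is globular (an identity on the underlying cospan), so each coherence diagram involving it reduces to the corresponding diagram in $(\A,+)$ pasted with the one expressing that $F$ is symmetric monoidal, i.e.\ to hypotheses we may assume.

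For (iii): the identity horizontal 1-cell on $a$ is $a \xleftarrow{\id} a \xrightarrow{\id} a$ carrying the ``empty'' decoration $F(!_a)\phi_0(\ast) \in F(a)$, where $!_a \maps 0 \to a$ is the unique map from the initial object and $\phi_0 \maps \one \to F(0)$ is the lax unit; the monoidal comparison $U(a)\otimes U(b) \to U(a+b)$ and its unit counterpart then come from $\phi$ and $\phi_0$, and the required axioms follow from those of $F$. The comparison for $\odot$ is the interchange isomorphism $(s\otimes t)\odot(s'\otimes t') \cong (s\odot s')\otimes(t\odot t')$: on underlying cospans it is the canonical isomorphism $(m+_b n)+(m'+_{b'}n') \cong (m+m')+_{b+b'}(n+n')$ built from the compatibility of coproducts and pushouts in $\A$, and on decorations it is obtained by pasting the image of that isomorphism under $F$ with the pseudonaturality cell \cref{eq:pseudonaturality} of $\phi$ and the pseudofunctoriality constraints of $F$. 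This is exactly the data that renders the definitions of $\tau_{\beta\odot\alpha}$ from \cref{thm:decorated_cospans_1} and of $\tau_{\alpha\otimes\beta}$ above mutually coherent. Part (iv) then collects the pentagon, the two hexagons and the triangle for $F\lCsp$; since every 2-cell involved is globular, each unwinds into a diagram whose cospan component is the corresponding coherence in $(\A,+)$ and whose decoration component is the corresponding coherence for the symmetric monoidal pseudofunctor $F$.

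I expect the real obstacle to lie precisely in the composition-is-monoidal step of (iii) and in the hexagon/pentagon verifications of (iv): one must simultaneously track several iterated pushouts in $\A$, the comparison maps $\phi_{m,n}$, their pseudonaturality, and the only-up-to-isomorphism pseudofunctoriality of $F$, and then check that the large pasted diagrams commute. Everything else is bookkeeping inherited from the cocartesian structure of $\A$; the value of the pseudomonoid-in-$\Dbl$ packaging is that it reduces the infinitely many double-categorical coherences to this finite, if laborious, list of naturality-and-coherence pastings.
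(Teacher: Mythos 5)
Your route is genuinely different from the paper's, and considerably longer. The paper's proof is a single sentence: \cref{thm:decorated_cospans_1} was proved by exhibiting $F\lCsp$ as $\Fr(\Phi)$ for a Beck--Chevalley \emph{monoidal} bifibration $\Phi \maps \A/U \to \A$, and \cref{lem:shulman} (Shulman's Theorem 14.9 in \cite{Shulman2008}) states that when such a $\Phi$ is moreover \emph{symmetric} monoidal, $\Fr(\Phi)$ is a symmetric monoidal double category. So all the paper needs to observe is that symmetry of $F$ makes $\Phi$ symmetric, which follows immediately from the symmetric monoidal structure on $\A/U$ induced via the natural isomorphism $u$ of \cref{eq:braidedpseudofun}. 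You instead propose a direct unwinding of the symmetric-pseudomonoid-in-$\Dbl$ definition and a by-hand verification of the resulting coherence axioms. The outline of the data you give is correct, and your observation that all the constraint 2-cells are globular is indeed the right reason the coherence laws reduce to those of $(\A,+)$ and of $(F,\phi,\phi_0)$. But you explicitly leave the interchange and hexagon/pentagon verifications as a ``laborious list'' to be checked, and that is precisely the labor Shulman's packaging saves: having already identified $\Phi$ as a Beck--Chevalley monoidal bifibration to prove \cref{thm:decorated_cospans_1}, the symmetric enhancement comes for free, and re-verifying the axioms from scratch would largely duplicate Shulman's proof of his Theorem 14.9 in a special case. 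If you intend to complete this argument, you should either carry out those pastings (which is a nontrivial amount of work involving iterated pushouts, $\phi_{m,n}$, and the pseudofunctoriality constraints of $F$) or, far more efficiently, observe that $\Phi$ becomes a Beck--Chevalley \emph{symmetric} monoidal bifibration and cite the symmetric version of the lemma.
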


We prove both these theorems using the work of Shulman \cite{Shulman2008}, who gives a general way to construct a double category from a `Beck--Chevalley monoidal bifibration'---a concept recalled in \cref{subsec:fibrations}.

\begin{lem}[\textbf{Shulman}] \label{lem:shulman}
Suppose $(\A,+)$ is cocartesian monoidal and $\Phi \maps (\C,\otimes) \to (\A,+)$ is a Beck--Chevalley monoidal bifibration.  Then there is a double category $\Fr(\Phi)$ in which

\begin{itemize}
\item an object is an object of $\A$,
\item a vertical 1-morphism is a morphism of $\A$,
\item a horizontal 1-cell is a pair of objects $a,b \in \A$ together with an
object $c \in \C$ with $\Phi(c) = a + b$,
\item a 2-morphism is a pair of morphisms $f \maps a \to a'$, $g \maps b \to b'$
in $\A$ together with a morphism $h \maps c \to c'$ in $\C$ with $\Phi(h) = f + g$.
\end{itemize}
If $\Phi$ is a Beck--Chevalley symmetric monoidal bifibration then $\Fr(\Phi)$ becomes a symmetric
monoidal double category.
\end{lem}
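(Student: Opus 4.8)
The plan is to derive this from Shulman's general construction \cite{Shulman2008}, reorganising his machinery into the present formulation; I outline the steps. First I would put $\Fr(\Phi)_0 := \A$, so that the objects and vertical $1$-morphisms are as claimed, and take $\Fr(\Phi)_1$ to be the pullback category $\C\times_{\A}(\A\times\A)$ formed along $\Phi$ and the coproduct functor $+\maps\A\times\A\to\A$:
\begin{displaymath}
\begin{tikzcd}
\Fr(\Phi)_1 \ar[r]\ar[d] & \C\ar[d,"\Phi"]\\
\A\times\A \ar[r,"+"'] & \A.
\end{tikzcd}
\end{displaymath}
Its objects are triples $(a,b,c)$ with $\Phi(c)=a+b$ and its morphisms are triples $(f,g,h)$ with $\Phi(h)=f+g$, with composition inherited from $\A$ and $\C$; this composition is strictly associative, and the source and target functors $\Fr(\Phi)_1\to\Fr(\Phi)_0$ are the two projections to $\A$. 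This data already realises the objects, vertical $1$-morphisms, horizontal $1$-cells and $2$-morphisms in the statement; what remains is to supply horizontal units, horizontal composition, and the coherence isomorphisms of a pseudo double category in the sense of \cref{defn:double_category}.

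Since $\Phi$ is strict monoidal it sends the monoidal unit $I$ of $\C$ to the initial object $0$ of $\A$, so for each $a$ one has the opcartesian extension $I_a := (\iota_a)_{!}I\in\C_a$ along the unique map $\iota_a\maps 0\to a$; the horizontal unit on $a$ is then $U_a := \nabla_a^{*}I_a\in\C_{a+a}$, the cartesian restriction of $I_a$ along the fold $\nabla_a\maps a+a\to a$. For horizontal composition, given $c$ over $a+b$ and $d$ over $b+e$ I would push $c$ and $d$ forward along the coproduct injections into the fibre $\C_{a+b+e}$, combine them there using the fibrewise monoidal operation that $\otimes$ and the fold maps induce on each fibre, and restrict along the injection $a+e\hookrightarrow a+b+e$ to land in $\C_{a+e}$; all of this uses only that $\Phi$ is a monoidal bifibration. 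The associator and unitor $2$-cells are assembled from the pseudofunctoriality isomorphisms of restriction and extension together with the Beck--Chevalley isomorphisms attached to the pushout squares of coproduct injections in $\A$ that occur, and this is the step at which the Beck--Chevalley hypothesis is indispensable.

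With the data in hand, one checks the pseudo double category axioms: naturality of the associator and unitors, the pentagon and triangle identities, and compatibility (interchange) between horizontal and vertical composition. Each reduces to a coherence identity among composites of cartesian and opcartesian lifts and Beck--Chevalley cells, and all are established in \cite{Shulman2008} --- where $\Fr(\Phi)$ is in fact shown to be a framed bicategory, hence a fortiori a pseudo double category. For the final assertion, when $\Phi$ is a symmetric monoidal bifibration one transports its structure: the tensor on $\Fr(\Phi)_0=\A$ is $+$, and on $\Fr(\Phi)_1$ it sends $\big((a,b,c),(a',b',c')\big)\mapsto(a+a',\,b+b',\,c\otimes c')$, which is regarded as an object of $\Fr(\Phi)_1$ via the symmetry isomorphism $\Phi(c\otimes c')=(a+b)+(a'+b')\cong(a+a')+(b+b')$ of $(\A,+)$. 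Checking that this tensor is compatible with the source, target, unit and composition functors up to the coherence data carried by the monoidal structure of $\Phi$ makes $\Fr(\Phi)$ a monoidal double category; the braiding of $\Phi$ supplies a braiding, and its involutivity makes $\Fr(\Phi)$ symmetric monoidal. These verifications too are carried out in \cite{Shulman2008}.

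The principal obstacle is organisational rather than conceptual: marshalling the numerous pseudofunctoriality and Beck--Chevalley isomorphisms into coherent associator, unitor and interchange $2$-cells, and then into the compatibility data for the monoidal and symmetric structure, is a long diagram chase. Since this is exactly what Shulman's framework accomplishes, the substantive work here is to match the hypotheses of the lemma to his setting --- in particular to note that a Beck--Chevalley monoidal bifibration over the \emph{cocartesian} base $(\A,+)$ is the order-dual of the situation over a cartesian base treated in \cite{Shulman2008} --- and then to invoke his theorems.
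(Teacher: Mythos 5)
Your proposal is correct and takes essentially the same route as the paper, which proves the lemma simply by citing Shulman's Theorem~14.9 (with the dual cartesian version worked out in his Theorem~14.4) in \cite{Shulman2008}. You additionally unpack the underlying construction---the pullback description of $\Fr(\Phi)_1$, the fibrewise units $U_a=\nabla_a^*I_a$, composition via opcartesian extensions, fibrewise tensors and cartesian restrictions, with Beck--Chevalley cells supplying the associator and unitor isomorphisms---and this agrees with Shulman's construction and correctly identifies the dualization from his cartesian-base treatment.
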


\begin{proof} This is \cite[Theorem 14.9]{Shulman2008}; Shulman proves a dual version in more detail in his Theorem 14.4.
\end{proof}

Note that when $\C$ is the arrow category of $\A$ and $\Phi$ maps any arrow in $\C$ to its
domain, a horizontal 1-cell in $\Fr(\Phi)$ simply amounts to a cospan in $\A$. In this case $\Fr(\Phi)$ is the double category of cospans in $\A$.    We shall use variations on this idea to construct double categories of decorated and structured cospans, and to prove that under certain conditions these double categories are equivalent.    We begin with decorated cospans, proving Theorem \ref{thm:decorated_cospans_1} and \ref{thm:decorated_cospans_2} by applying Shulman's result to a particular monoidal bifibration built from a lax monoidal pseudofunctor $F \maps (\A,+) \to (\Cat,\times)$.

The Grothendieck construction gives a bijection between pseudofunctors $F \maps \A \to \Cat$ and opfibrations $U \maps \inta F \to \A$.  We need two refinements of this construction: one that gives monoidal opfibrations, and one that gives symmetric monoidal opfibrations.  We recall pseudofunctors in \cref{subsec:bicats}, and opfibrations in \cref{subsec:fibrations}; here we simply state the result we need.

\begin{lem}
\label{lem:MonGroth}
For any monoidal category $(\A,\otimes)$, there is a 2-equivalence between the 2-categories of monoidal opfibrations $U\maps (\X, \otimes) \to (\A,
\otimes)$ and lax monoidal pseudofunctors $F \maps (\A,\otimes) \to (\mathsf{Cat}, \times)$, and if $\A$ cocartesian monoidal, there is a
2-equivalence between these and pseudofunctors from $\A$ into $\MonCat$.  If $\A$ is symmetric monoidal, there is also a 2-equivalence between the
2-categories of symmetric monoidal opfibrations $U \maps (\X, \otimes) \to (\A, \otimes)$ and symmetric lax monoidal pseudofunctors $F \maps
(\A,\otimes) \to (\mathsf{Cat}, \times)$ , and if $\A$ cocartesian monoidal, there is a 2-equivalence between these and pseudofunctors from $\A$ into
$\SMC$.
\end{lem}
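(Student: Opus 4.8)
The plan is to bootstrap everything from the ordinary Grothendieck 2-equivalence between pseudofunctors $F \maps \A \to \Cat$ and opfibrations $U \maps \inta F \to \A$ recalled above, transporting the monoidal data across it while using systematically that everything lies over the fixed category $\A$. For the first claim I would show that a lax monoidal structure $(\phi,\phi_0)$ on $F$ is exactly what is needed to promote $U \maps \inta F \to \A$ to a monoidal opfibration: one sets $(a,x)\otimes(b,y) \mapseqq (a\otimes b,\phi_{a,b}(x,y))$ and takes $(I_\A,\phi_0(\ast))$ as the monoidal unit, obtains the associativity and unit isomorphisms of $\inta F$ — which are forced to lie over the corresponding isomorphisms of $\A$ — from the pseudonaturality cells of $\phi$ and the invertible modifications packaged in a lax monoidal pseudofunctor, and checks that $U$ is strict monoidal and that the tensor functor of $\inta F$ preserves opcartesian morphisms (the opcartesian morphisms of $\inta F$ being precisely those with invertible $F$-component, and $\phi_{a,b}$ being a functor). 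Conversely, given a monoidal opfibration $U \maps \X \to \A$ classified by $F$, restricting $\otimes_\X$ to the fibres over $a$ and $b$ lands in the fibre over $a\otimes b$ and yields $\phi_{a,b}$, the unit of $\X$ lies over $I_\A$ and yields $\phi_0$, and the coherence isomorphisms of $\X$ restrict to the required modifications; that $\otimes_\X$ preserves opcartesian lifts is exactly what makes $\phi$ pseudonatural. These two passages are mutually inverse up to the coherent isomorphisms of the ambient 2-equivalence, essentially because the monoidal structure on $\inta F$ is pinned down, up to isomorphism, by requiring $U$ strict monoidal and $\otimes$ to preserve opcartesian lifts.

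I would then promote this to a 2-equivalence: a monoidal morphism of opfibrations over $\A$ matches a monoidal pseudonatural transformation of the classifying lax monoidal pseudofunctors, and a monoidal 2-cell a monoidal modification, with the unit and counit of the classical Grothendieck 2-equivalence automatically respecting the transported structure. The symmetric case is handled identically: a symmetry on $\X$ lying over the symmetry $\sigma$ of $(\A,\otimes)$ restricts on fibres to the invertible comparison between $\phi_{a,b}$ and $\phi_{b,a}$ composed with the swap, sitting over $\sigma_{a,b}$, which is exactly the extra datum of a symmetric lax monoidal pseudofunctor; transporting this datum across the 2-equivalence gives the stated refinement.

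For the cocartesian refinements, suppose $(\A,\otimes)=(\A,+)$ is cocartesian monoidal, so that every object $a$ is canonically a commutative monoid via the fold map $\nabla_a \maps a+a\to a$ and $u_a \maps 0\to a$, and every morphism of $\A$ is canonically a monoid homomorphism. Given a lax monoidal pseudofunctor $(F,\phi,\phi_0)$ I would equip each $F(a)$ with the monoidal structure having tensor $F(\nabla_a)\circ\phi_{a,a}$ and unit $F(u_a)\circ\phi_0$ (coherences assembled from the pseudofunctoriality of $F$ and the modifications of $\phi$), note that each $F(f)$ becomes strong monoidal with constraint built from the pseudonaturality of $\phi$ at $f+f$ together with the comparison $F(\nabla)\circ F(f+f)\cong F(f)\circ F(\nabla)$, and record that this defines a pseudofunctor $\A\to\MonCat$. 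Conversely, from $G\maps \A\to\MonCat$ I would take $\phi_{a,b}$ to be $G(a)\times G(b)\xrightarrow{G(\iota_a)\times G(\iota_b)}G(a+b)\times G(a+b)\xrightarrow{\otimes}G(a+b)$, with pseudonaturality cells and the lax monoidal axioms coming from the monoidal functoriality of the $G(f)$ and the universal property of $+$; identities such as $\nabla_{a+b}\circ(\iota_a+\iota_b)=\id_{a+b}$ and $\nabla_a\circ\iota_a=\id_a$ show the two passages are mutually inverse up to coherent isomorphism. The same argument, additionally recording that these monoid structures are commutative and that each $G(f)$ is a symmetric monoidal functor, yields the refinement to $\SMC$.

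The step I expect to be the real labor — rather than a conceptual obstacle — is verifying that the transported associativity, unit, and symmetry isomorphisms on $\inta F$, and in the cocartesian case on each fibre $F(a)$, satisfy the pentagon, triangle, and hexagon identities, so that one obtains honest 2-equivalences and not merely bijections on objects. The saving grace I would lean on throughout is rigidity: since every cell in play lies over fixed data in $\A$ and opcartesian lifts are unique up to canonical isomorphism, each such identity is forced and reduces mechanically to a coherence axiom already available for $F$ or for $G$. (Much of this is the monoidal Grothendieck construction, so in the write-up one could instead quote it and supply only the cocartesian specializations.)
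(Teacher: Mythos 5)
Your argument is correct in outline, but it takes a genuinely different route from the paper. The paper does not prove this lemma at all in the usual sense: it cites Moeller--Vasilakopoulou \cite[Theorems~3.13~\&~4.2]{MV} for all four 2-equivalences and then merely \emph{unpacks} the induced structures, writing out the formulas for the tensor product on $\inta F$ in \cref{eq:explicitstructure2}, the fiberwise monoidal structure in \cref{eq:explicitstructure1}, and the strong monoidal structure on the reindexing functors, so that these can be referred to later. You instead reconstruct the monoidal Grothendieck 2-equivalence from scratch by transporting the lax monoidal data across the ordinary Grothendieck equivalence, checking that $\otimes_\X$ preserves opcartesian lifts via the invertibility of the pseudonaturality cells of $\phi$, and specializing to the cocartesian case using the fold map $\nabla_a$ and the coproduct inclusions $\iota_a$ — which is essentially the proof that \cite{MV} gives. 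Your sketch is accurate (your formulas for the fiberwise tensor $F(\nabla_a)\circ\phi_{a,a}$ and for $\phi_{a,b} = \otimes\circ(G(\iota_a)\times G(\iota_b))$, together with the retraction identities $\nabla_{a+b}\circ(\iota_a+\iota_b)=\id$ and $\nabla_a\circ\iota_a=\id$, are exactly the content of the cocartesian refinement), and you even note at the end that citing the monoidal Grothendieck construction is the efficient alternative, which is precisely what the paper does. The main thing your route buys is self-containment; the main thing it costs is the non-trivial verification, which you correctly flag, of the pentagon/triangle/hexagon coherences for the transported structure and the 2-functoriality of the correspondence, none of which you carry out but all of which \cite{MV} has already done.
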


\begin{proof}
This was shown by Moeller and the third author \cite[Theorems~3.13 \& 4.2]{MV}. In summary, for a cocartesian base $\A$ we have correspondences
\begin{gather}
\textrm{lax monoidal pseudofunctors }F\maps(\A,+)\to(\Cat,\times) \notag\\
\Updownarrow \notag\\
\textrm{monoidal opfibrations }U\maps(\X,\otimes_\X)\to(\A,+) \notag\\
\Updownarrow \notag\\
\textrm{pseudofunctors } F \maps \A\to \MonCat \notag
\end{gather}
The second equivalence was observed earlier by Shulman \cite{Shulman2008}. Moreover, symmetric lax monoidal pseudofunctors correspond to symmetric monoidal opfibrations, and those to pseudofunctors into $\SMC$.

In more detail, if $(\phi,\phi_0)$ is the lax monoidal structure of the pseudofunctor $F$ as recalled in \cref{subsec:bicats}, the induced monoidal structure on the Grothendieck category $\X=\inta F$ (\cref{def:GrothCat}) is given by
\begin{equation}\label{eq:explicitstructure2}
\Big(a,s\in F(a)\Big)\ot_{\X}\Big(b,t\in F(b)\Big)=\Big(a+b,\phi_{a,b}(s,t)\in F(a+b)\Big), \qquad I_{\X}=\Big(0_A,\phi_0\Big)
\end{equation}
If $F$ is a symmetric lax monoidal pseudofunctor, the induced monoidal structure in $\inta F$ is symmetric via
$$\left(\beta_{a,b},(u_{a,b})_{s,t}\right)\maps \left(a+b,\phi_{a,b}(s,t)\right)\simrightarrow\left(b+a,\phi_{b,a}(t,s)\right)$$ where $\beta$ is the canonical symmetry for $\A$ and $u$ is the natural isomorphism of \cref{eq:braidedpseudofun}.

Moreover, each fiber $\X_a=F(a)$ obtains a monoidal structure via
\begin{equation} \label{eq:explicitstructure1}
\otimes_a\maps F(a)\times F(a)\xrightarrow{\phi_{a,a}}F(a+a)\xrightarrow{F(\nabla)}F(a),\quad
I_a\maps\one\xrightarrow{\phi_0}F(0)\xrightarrow{F(!_a)}F(a)
\end{equation}
where $\nabla$ is the fold map, which is symmetric when $F$ is, again via the components of $u_{a,a}$. Also, each reindexing functor $f_!=F(f)$ obtains a strong monoidal structure with
\begin{displaymath}
\begin{tikzcd}[row sep=.35in]
 F(a)\times F(a)\ar[r,"\phi_{a,a}"]\ar[d,"F(f)\times F(f)"']\ar[dr,phantom,"{\stackrel[\cref{eq:pseudonaturality}]{\phi_{f,f}}{\cong}}"] &
 F(a+a)\ar[r,"F(\nabla)"]\ar[d,"F(f+f)"description]\ar[dr,phantom,"\cong"] & F(a)\ar[d,"F(f)"] \\
 F(b)\times F(b)\ar[r,"\phi_{b,b}"'] & F(b+b)\ar[r,"F(\nabla)"'] & F(b)
\end{tikzcd}
\end{displaymath}
\end{proof}

These 2-equivalences further restrict to the case when the Grothendieck category $(X,\otimes_\X)$ is specifically cocartesian monoidal itself, with coproducts built up from \cref{eq:explicitstructure2}. In that case, opfibrations $(\X,+) \to (\A,+)$ that strictly preserve coproducts and initial object bijectively correspond to pseudofunctors into the 2-category of cocartesian categories.  For more details, see \cite[Corollary 4.7]{MV} and the related discussion.

Now we are ready to use Shulman's result to prove Theorems \ref{thm:decorated_cospans_1} and  \ref{thm:decorated_cospans_2}.  An $F$-decorated cospan from $a$ to $b$
is a cospan $a \to m \leftarrow b$ in $\A$ together with an object $s \in F(m)$.   This can be seen as a pair $(e \to m, s \in F(m))$ such that $e$ equals $a + b$.   But $(m,s \in F(m))$ is precisely an object of the Grothendieck category $\inta F$, and the opfibration $U \maps \inta F \to \A$ sends this object to $m$.  Thus, $(e \to m, s \in F(m))$ can be seen as an object of the comma category $\A/U$.   Let $\Phi \maps \A/U \to \A$ be the functor with
\[    \Phi(e \to m,s \in F(m)) = e .\]
Then an $F$-decorated cospan from $a$ to $b$ amounts to an object $c \in \A/U$ such that $\Phi(c) = a + b$.    With this choice of $\Phi$, Shulman's \ref{lem:shulman} gives the double category of $F$-decorated cospans.

\begin{proof}[Proof of \cref{thm:decorated_cospans_1}]
Let $U \maps \inta F \to \A$ be the monoidal opfibration associated to the lax monoidal pseudofunctor $F \maps (\A,+) \to (\Cat, \times)$ via \cref{lem:MonGroth}.  To obtain the desired double category we apply \cref{lem:shulman} to the functor $\Phi \maps \A/U \to \A$, which we now describe in more detail.  In the comma category $\A/U$:
\begin{itemize}
\item an object is a pair $(a \xrightarrow{i} b, s)$ consisting of a morphism in $\A$ and an
object $s \in F(b)$;
\item a morphism from $(a \xrightarrow{i} b, s)$ to $(a' \xrightarrow{i'} b', s')$ is a triple $(f,g,h)$ where $f \maps a \to a'$ and $g \maps b \to b'$ are morphisms in $\A$ such that this square commutes:
\[
\begin{tikzpicture}[scale=1.5]
\node (A) at (0,0) {$a$};
\node (B) at (1.5,0) {$a'$};
\node (C) at (0,-1) {$b$};
\node (D) at (1.5,-1) {$b'$};
\path[->,font=\scriptsize,>=angle 90]
(A) edge node[above]{$f$} (B)
(A) edge node[left]{$i$} (C)
(B) edge node[right]{$i'$} (D)
(C) edge node[below]{$g$} (D);
\end{tikzpicture}
\]
and $h \maps F(g)(s) \to s'$ is a morphism in $F(b')$.
\end{itemize}
As in any comma category we have a functor $\Phi \maps \A/U \to \A$ given on objects by $\Phi(a \xrightarrow{i} b, s)=a$ and on morphisms by $\Phi(f,g,h)=f$.

To apply \cref{lem:shulman} it suffices to prove that $\Phi$ is a Beck--Chevalley monoidal opfibration.   First, there is a monoidal structure on $\A/U$ coming from the monoidality of $U$.  On objects this is given by
\[  (a \xrightarrow{i} b, s) \otimes (a' \xrightarrow{i'} b', s') = (a+a' \xrightarrow{i+i'} b+b',
\phi_{b,b'}(s,s') )\]
where $\phi$ is the laxator for $F$.  The monoidal unit is $(0_\A \xrightarrow{!} 0_\A, 0)$
where $0 \in F(0_\A)$ is the object given by $\phi_0 \maps \one \to F(0_\A)$.   With this monoidal structure on $\A/U$, it is easy to see that $\Phi$ is a strict monoidal functor.

Next, one can check that $\Phi$ is a fibration.  Given a morphism $f \maps a \to a'$ in $\A$ and an object $(a' \xrightarrow{i'} b', s') \in \A/U$ over $a'$, one can show that a cartesian lifting of $f$ to this object is given by
\[ (f,1,1) \maps (a \xrightarrow{i' f} b' , s')\; \to \; (a' \xrightarrow{i'} b', s'). \]
Denoting the cartesian lifting of $f$ to an object $z \in \A/U$ by $\mathrm{Cart}(f,z)$, the tensor product of $\A/U$ preserves such cartesian liftings, in fact strictly, in the sense that $\mathrm{Cart}(f_1,z_1) \otimes \mathrm{Cart}(f_2,z_2)=\mathrm{Cart}(f_1 + f_2, z_1 \otimes z_2)$.

One can also check that $\Phi$ is an opfibration. Given a morphism $f \maps a \to a'$ in $\A$ and an object $(a \xrightarrow{i}b, s) \in \A/U$ over $a$, one can show that a cocartesian lifting of $f$ is given by
\[ (f,g,1) \maps (a \xrightarrow{i} b, s) \, \to \, (a' \xrightarrow{i'} b+_a a', F(g)(s)). \]
where $g, i'$ and $b +_a a'$ arise from this pushout square in $\A$:
\[
\begin{tikzpicture}[scale=1.5]
\node (A) at (0,0) {$a$};
\node (B) at (1.5,0) {$a'$};
\node (C) at (0,-1) {$b$};
\node (D) at (1.5,-1) {$b+_a a'$};
\node at (1.2,-0.8) {\Large{$\ulcorner$}};
\path[->,font=\scriptsize,>=angle 90]
(A) edge node[above]{$f$} (B)
(A) edge node[left]{$i$} (C)
(B) edge node[right]{$i'$} (D)
(C) edge node[below]{$g$} (D);
\end{tikzpicture}
\]
To show that $(f,g,1)$ is a cocartesian lifting of $f$, suppose we are given a morphism
\[    (f'',g'',h'') \maps (a \xrightarrow{i} b, s) \; \to \; (a'' \xrightarrow{i''} b'', s'')   \]
in $\A/U$ and a morphism $k \maps a' \to a''$ in $\A$ such that $\Phi(f'',g'',h'') = k f$.   We need to show there exists a unique morphism
\[     (\tilde{f}, \tilde{g}, \tilde{h}) \maps (a' \xrightarrow{i'} b +_a a', F(g)(s)) \; \to \; (a'' \xrightarrow{i''} b'', s'') \]
such that $\Phi(\tilde{f}, \tilde{g}, \tilde{h}) = k$ and
\[       (\tilde{f}, \tilde{g}, \tilde{h}) \circ (f,g,1) = (f'',g'',h'') .\]
To achieve this we choose $\tilde{f} = k$, $\tilde{h} = h''$, and define $\tilde{g}$ using
the universal property of the pushout:
\[
\begin{tikzpicture}[scale=1.5]
\node (A) at (0,0) {$a$};
\node (B) at (1.5,0) {$a'$};
\node (C) at (0,-1) {$b$};
\node (D) at (1.5,-1) {$b+_a a'$};
\node (E) at (2.5,-1.5) {$b''$};
\node at (1.2,-0.8) {\Large{$\ulcorner$}};
\path[->,font=\scriptsize,>=angle 90]
(A) edge node[above]{$f$} (B)
(A) edge node[left]{$i$} (C)
(B) edge node[right]{$i'$} (D)
(C) edge node[below]{$g$} (D)
(C) edge [bend right] node[below]{$g''$} (E)
(B) edge [bend left] node[right]{$i'' k$} (E)
(D) edge [dashed] node[below left]{$\exists! \, \tilde{g} \phantom{a}$} (E);
\end{tikzpicture}
\]
after noting that $g'' i = i' f'' = i'' k f $ by the commuting square condition obeyed by morphisms
in $\A/U$.   One can check that with these choices, $(\tilde{f},\tilde{g}, \tilde{h})$ obeys the desired conditions and is the unique morphism to do so.

It follows that $\Phi$ is a bifibration.  Denoting the cocartesian lifting of a morphism $f$ to an object $z \in \A/U$ by $\mathrm{Cocart}(f,z)$, we have that $\mathrm{Cocart}(f_1,z_1) \otimes \mathrm{Cocart}(f_2,z_2) \cong \mathrm{Cocart}(f_1 + f_2,z_1 \otimes z_2)$ by the universal map between two colimits of the same diagram.    Thus, $\Phi$ is a monoidal bifibration.

Lastly, we need to check that $\Phi$ satisfies the Beck--Chevalley condition.  Merely from the fact that $\Phi$ is a bifibration, for any commutative square in $\A$:
\[
\begin{tikzpicture}[scale=1.5]
\node (A) at (0,0) {$a$};
\node (B) at (1.5,0) {$b$};
\node (C) at (0,-1) {$c$};
\node (D) at (1.5,-1) {$d$};
\label{bc-square}
\path[->,font=\scriptsize,>=angle 90]
(A) edge node[above]{$h$} (B)
(A) edge node[left]{$k$} (C)
(B) edge node[right]{$g$} (D)
(C) edge node[below]{$f$} (D);
\end{tikzpicture}
\]
there is a natural transformation
\[
\begin{tikzpicture}[scale=1.5]
\node (A) at (0,0) {$(\A/U)_a$};
\node (B) at (1.5,0) {$(\A/U)_b$};
\node (C) at (0,-1) {$(\A/U)_c$};
\node (D) at (1.5,-1) {$(\A/U)_d$};
\node (E) at (0.75,-0.5) {$\scriptstyle\Downarrow \theta$};
\path[->,font=\scriptsize,>=angle 90]
(B) edge node[above]{$h^*$} (A)
(A) edge node[left]{$k_!$} (C)
(B) edge node[right]{$g_!$} (D)
(D) edge node[below]{$f^*$} (C);
\end{tikzpicture}
\]
defined in \eqref{theta}. We need to prove that when the square in $\A$ is
a pushout, $\theta$ is a natural isomorphism.

We start by describing $\theta$ more concretely.  Let $(b \xrightarrow{i} e, s)$ be an object of $(\A/U)_b$. Going left, we precompose with $h \maps a \to b$ to obtain the object $(a \xrightarrow{i h} e, s)$ of $(\A/U)_a$. Then going down, we push forward along $k$:
\[
\begin{tikzpicture}[scale=1.5]
\node (A) at (0,0) {$a$};
\node (B) at (1.5,0) {$e$};
\node (C) at (0,-1) {$c$};
\node (D) at (1.5,-1) {$c+_{a}e$};
\node at (1.2,-0.8) {\Large{$\ulcorner$}};
\path[->,font=\scriptsize,>=angle 90]
(A) edge node[above]{$i h$} (B)
(A) edge node[left]{$k$} (C)
(B) edge node[right]{$$} (D)
(C) edge node[below]{$\psi$} (D);
\end{tikzpicture}
\]
to obtain the object $(c \xrightarrow{\psi} c+_a e, F(\psi)(s))$ of $(\A/U)_c$.  For the other route, first going down, we push the object $(b \xrightarrow{i} e,s)$ forward along $g \maps b \to d$ by taking the following pushout:
\[
\begin{tikzpicture}[scale=1.5]
\node (A) at (0,0) {$b$};
\node (B) at (1.5,0) {$e$};
\node (C) at (0,-1) {$d$};
\node (D) at (1.5,-1) {$d+_b e$};
\node at (1.2,-0.8) {\Large{$\ulcorner$}};
\path[->,font=\scriptsize,>=angle 90]
(A) edge node[above]{$i$} (B)
(A) edge node[left]{$g$} (C)
(B) edge node[right]{$$} (D)
(C) edge node[below]{$\phi$} (D);
\end{tikzpicture}
\]
which yields the object $(d \xrightarrow{\phi} d+_b e, F(\phi)(s))$ of $(\A/U)_d$.  Then going left, we precompose with $f \maps c \to d$ to obtain the object $(c \xrightarrow{\phi f} d+_b e, F(\phi)(s))$ of $(\A/U)_c$.

The natural transformation $\theta$ gives a morphism
\[    \theta \maps   (c \xrightarrow{\psi} c+_a e, F(\psi)(s)) \; \to \; (c \xrightarrow{\phi f} d+_b e, F(\phi)(s))  \]
in $(\A/U)_c$.   Concretely, this arises from the universal property of the pushout:
\[
\begin{tikzpicture}[scale=1.5]
\node (A) at (0,0) {$b$};
\node (B) at (1.5,0) {$e$};
\node (C) at (0,-1) {$d$};
\node (D) at (1.5,-1) {$d+_{b}e$};
\node (E) at (-1.5,0) {$a$};
\node (F) at (-1.5,-1) {$c$};
\node (G) at (2.5,-2) {$c+_{a}e$};
\node at (1.2,-0.8) {\Large{$\ulcorner$}};
\path[->,font=\scriptsize,>=angle 90]
(E) edge node [left] {$k$} (F)
(E) edge node [above] {$h$} (A)
(F) edge node [below] {$f$} (C)
(A) edge node[above]{$i$} (B)
(A) edge node[left]{$g$} (C)
(B) edge node[right]{$$} (D)
(C) edge node[below]{$\phi$} (D)
(F) edge [bend right] node[below]{$\psi$} (G)
(G) edge [dashed] node[below left]{$\exists ! \;\zeta $} (D)
(B) edge [bend left] (G);
\end{tikzpicture}
\]
Namely, we have $\theta = (1, \zeta, 1)$.   But when the original square in $\A$ (at upper left
above) is a pushout, its pasting with the other pushout gives a pushout, so $\zeta$ and hence $\theta$ is an isomorphism.

This shows that $\Phi$ obeys the Beck--Chevalley condition.  Having checked all the hypotheses of \cref{lem:shulman}, we can define $F\lCsp$ to be $\Fr(\Phi)$ and conclude the proof of \cref{thm:decorated_cospans_1}.
\end{proof}

\begin{proof}[Proof of \cref{thm:decorated_cospans_2}]
When the lax monoidal pseudofunctor $F \maps (\A,+) \to (\Cat, \times)$ of \cref{thm:decorated_cospans_1} is symmetric, the monoidal bifibration $\Phi \maps \A/U \to A$
is symmetric, so \cref{lem:shulman} implies that the double category $F\lCsp = \Fr(\Phi)$ is symmetric monoidal as described.
\end{proof}

The decorated cospan formalism gives not only double categories, but also maps between these.  Suppose we have two categories $\A,\A'$ with finite colimits and two lax monoidal pseudofunctors $F \maps \A \to \Cat$ and $F' \maps \A' \to \Cat$.   Then we can obtain a map between their decorated cospan double categories, namely a double functor $\lH \maps F\lCsp \to F' \lCsp$, from:
\begin{itemize}
\item a functor $H \maps \A \to \A'$ that preserves finite colimits,
\item a lax monoidal pseudofunctor $(E,\phi,\phi_0) \maps \Cat \to \Cat$,
\item a natural transformation $\theta \maps EF \Rightarrow F'H$:
\[
\begin{tikzpicture}[scale=1.5]
\node (A) at (0,0) {$\A$};
\node (B) at (1,0) {$\Cat$};
\node (C) at (0,-1) {$\A'$};
\node (D) at (1,-1) {$\Cat$.};
\node (E) at (0.5,-0.5) {$\scriptstyle\Downarrow\theta$};
\path[->,font=\scriptsize,>=angle 90]
(A) edge node[above]{$F$} (B)
(A) edge node[left]{$H$} (C)
(B) edge node[right]{$E$} (D)
(C) edge node[below]{$F'$} (D);
\end{tikzpicture}
\]
\end{itemize}
The intuition behind this square is that while the pseudofunctors $F$ and $F'$ serve to assign categories of possible decorations to the objects of $\A$ and $\A'$, respectively, the functor $H$ lets us turn objects of $\A$ into objects of $\A'$, and the pseudofunctor $E$ lets us change decorations as prescribed by $F$ into those as prescribed by $F'$, up to the transformation $\theta$.  In applications $E$ and $H$ are often identities.

The double functor $\lH \maps F\lCsp \to F'\lCsp$ is defined as follows:
\begin{itemize}
\item The image of an object $a \in F\lCsp_0=\A$ is the object $H(a) \in F'\lCsp_0=\A'$.
\item The image of a vertical 1-morphism $f \maps a \to b$ is the vertical 1-morphism $H(f) \maps H(a) \to H(b)$.
\end{itemize}
In other words, the object component $\lH_0$ of the double functor $\lH$ is the functor $H$.
\begin{itemize}
\item The image of an $F$-decorated cospan
\[ M= (a\xrightarrow{i}m\xleftarrow{o}b, \; s \in F(m)) \]
is the following $F'$-decorated cospan:
\[ \lH(M)= (H(a)\xrightarrow{H(i)}H(m)\xleftarrow{H(o)}H(b), \;\bar{s}\in F'(H(m))) \]
 where
\[
\bar{s}:=\one \xrightarrow{\phi_0} E(\one) \xrightarrow{E(s)} E(F(m)) \xrightarrow{\theta_m} F'(H(m)).\]
\item The image of a map of decorated cospans in $F\lCsp$
\[
\begin{tikzpicture}[scale=1.5]
\node (A) at (0,0.5) {$a$};
\node (A') at (0,-0.5) {$a'$};
\node (B) at (1,0.5) {$m$};
\node (C) at (2,0.5) {$b$};
\node (C') at (2,-0.5) {$b'$};
\node (D) at (1,-0.5) {$n$};
\node (E) at (3,0.5) {$s \in F(m)$};
\node (F) at (3,-0.5) {$s' \in F(n)$};
\node (G) at (1,-1) {$\tau \maps F(h)(s) \to s'$};
\path[->,font=\scriptsize,>=angle 90]
(A) edge node[above]{$i$} (B)
(C) edge node[above]{$o$} (B)
(A) edge node[left]{$f$} (A')
(C) edge node[right]{$g$} (C')
(A') edge node[below] {$i'$} (D)
(C') edge node[below] {$o'$} (D)
(B) edge node [left] {$h$} (D);
\end{tikzpicture}
\]
is the following map of $F'$-decorated cospans in $F'\lCsp$:
\[
\begin{tikzpicture}[scale=1.5]
\node (A'') at (4.75,0.5) {$H(a)$};
\node (A''') at (4.75,-0.5) {$H(a')$};
\node (B'') at (5.75,0.5) {$H(m)$};
\node (C'') at (6.75,0.5) {$H(b)$};
\node (C''') at (6.75,-0.5) {$H(b')$};
\node (D'') at (5.75,-0.5) {$H(n)$};
\node (E'') at (8.5,0.5) {$\overline{s}\in F'(H(m))$};
\node (F'') at (8.5,-0.5) {$\overline{s'} \in F'(H(n))$};
\node (G) at (5.75,-1) {$\lH(\tau) \maps F'(H(h))(\overline{s}) \to \overline{s'}$};
\path[->,font=\scriptsize,>=angle 90]
(A'') edge node[above]{$H(i)$} (B'')
(C'') edge node[above]{$H(o)$} (B'')
(A'') edge node[left]{$H(f)$} (A''')
(C'') edge node[right]{$H(g)$} (C''')
(A''') edge node[below] {$H(i')$} (D'')
(C''') edge node[below] {$H(o')$} (D'')
(B'') edge node [left] {$H(h)$} (D'');
\end{tikzpicture}
\]
where the decoration morphism $\lH(\tau)$ is defined as follows.  Treating $\tau$ as a natural transformation as in \cref{eq:taunattransf}, $\lH(\tau)$ is given by
\[
\begin{tikzpicture}[scale=1.5]
\node (A) at (-0.25,-0.5) {$1$};
\node (L) at (0.5,-0.5) {$E(1)$};
\node (B) at (2,0) {$E(F(m))$};
\node (D) at (2,-1) {$E(F(n))$};
\node (C) at (1.5,-0.5) {$\scriptstyle\Downarrow E(\tau)$};
\node (E) at (4,0) {$F^\prime(H(m))$};
\node (F) at (4,-1) {$F^\prime(H(n))$.};
\path[->,font=\scriptsize,>=angle 90]
(A) edge node [above] {$\phi_0$} (L)
(B) edge node [above] {$\theta_m$} (E)
(D) edge node [below] {$\theta_n$} (F)
(E) edge node [right] {$F'(H(h))$} (F)
(L) edge node[above]{$E(s)$} (B)
(L) edge node[below]{$E(s')$} (D)
(B) edge node[right]{$E(F((h))$} (D);
\end{tikzpicture}
\]
The square at right commutes strictly because $\theta$ is a natural transformation.
\end{itemize}

\begin{thm}
\label{thm:functoriality}
Given categories $\A$ and $\A'$ with finite colimits, lax monoidal pseudofunctors $F \maps (\A,+) \to (\Cat,\times)$ and $F' \maps (\A',+) \to
(\Cat,\times)$, a finite colimit preserving functor $H \maps \A \to \A'$, a lax monoidal pseudofunctor $E \maps (\Cat,\times) \to (\Cat,\times)$ and
a monoidal natural transformation $\theta$ as in the following diagram:
\[
\begin{tikzpicture}[scale=1.5]
\node (A) at (0,0) {$\A$};
\node (B) at (1,0) {$\Cat$};
\node (C) at (0,-1) {$\A'$};
\node (D) at (1,-1) {$\Cat$};
\node (E) at (0.5,-0.5) {$\scriptstyle\Downarrow\theta$};
\path[->,font=\scriptsize,>=angle 90]
(A) edge node[above]{$F$} (B)
(A) edge node[left]{$H$} (C)
(B) edge node[right]{$E$} (D)
(C) edge node[below]{$F'$} (D);
\end{tikzpicture}
\]
we obtain a double functor $\lH \maps F\lCsp \to F'\lCsp$ defined as above.  If $F, F'$ and $E$ are symmetric, then $\lH \maps F\lCsp \to F'\lCsp$ is a symmetric monoidal double functor.
\end{thm}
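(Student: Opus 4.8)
The plan is to obtain $\lH$ not by checking the double-functor axioms against the formulas above directly, but by exhibiting the data $(H,E,\theta)$ as a morphism of Beck--Chevalley monoidal bifibrations and then applying Shulman's $\Fr$-construction to it, exactly mirroring the proof of \cref{thm:decorated_cospans_1}. Recall that there $F\lCsp = \Fr(\Phi)$ for the strict monoidal Beck--Chevalley bifibration $\Phi\maps \A/U\to\A$ built from the monoidal opfibration $U\maps\inta F\to\A$ of \cref{lem:MonGroth}, and likewise $F'\lCsp=\Fr(\Phi')$; the $\Fr$-construction is functorial in such morphisms, so it suffices to produce one over $H$. After that one only needs a short bookkeeping step to see that the result agrees with the explicit description of $\lH$ given before the theorem.

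First I would promote $\theta$ to a functor $\widetilde H\maps\inta F\to\inta F'$ lying over $H$, i.e.\ with $U'\widetilde H=HU$. On objects it sends $(m,s\in F(m))$ to $(H(m),\bar s\in F'(H(m)))$ with $\bar s=\one\to E(\one)\to E(F(m))\to F'(H(m))$ the composite appearing in the statement; on a morphism $(m,s)\to(m',s')$, which is an arrow $u\maps m\to m'$ of $\A$ together with a morphism $F(u)(s)\to s'$ in $F(m')$, it applies $H$ to $u$ and then $E$ followed by $\theta_{m'}$ to the decoration morphism, using naturality of $\theta$ to land it in $F'(H(m'))$. I would then verify: (a) $\widetilde H$ sends cocartesian liftings to cocartesian liftings, which reduces to the identity $\overline{F(u)(s)}=F'(H(u))(\bar s)$ and hence to naturality of $\theta$; (b) $\widetilde H$ is lax --- in fact strong, since $H$ preserves coproducts --- monoidal for the monoidal structures of \cref{eq:explicitstructure2}, which reduces to a coherence hexagon built from the laxators of $F$, $F'$ and $E$ together with the monoidality constraint of $\theta$; and (c) in the symmetric case $\widetilde H$ respects the braidings of \cref{lem:MonGroth}, using $\theta$'s compatibility with the symmetries.

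A functor over $H$ between the Grothendieck opfibrations induces, by post-composition in the comma-category construction, a functor $\A/U\to\A'/U'$ over $H$; because $\widetilde H$ is (symmetric) monoidal and cocartesian-preserving and $H$ preserves pushouts, this functor strictly preserves the tensor product, preserves the cartesian and cocartesian liftings described in the proof of \cref{thm:decorated_cospans_1}, and therefore carries Beck--Chevalley squares to Beck--Chevalley squares. Thus it is a morphism of Beck--Chevalley (symmetric, in the symmetric case) monoidal bifibrations over $H$, and applying $\Fr$ yields a (symmetric) monoidal double functor $\Fr(\Phi)\to\Fr(\Phi')$. Unwinding $\Fr$ of this morphism --- on objects, vertical 1-morphisms, horizontal 1-cells and 2-morphisms, and on the compositor constraint, which is the canonical comparison coming from $H$ preserving the pushout that defines horizontal composition --- one checks it is exactly the $\lH$ described before the theorem, in particular that $\lH(\tau)$ is the displayed pasting. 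This gives both parts of the statement at once.

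The step I expect to be the real obstacle is the coherence underlying (b), equivalently the Beck--Chevalley compatibility in the last paragraph: one must assemble a single commuting diagram identifying $E$ applied to the decoration $t\odot s$ of a composite decorated cospan with the decoration $\bar t\odot\bar s$ of the composite of the images, out of (i) $H$ preserving the defining pushout, (ii) the laxator of $E$, (iii) pseudonaturality of the laxators $\phi$ of $F$ and $\phi'$ of $F'$ (\cref{eq:pseudonaturality}), (iv) naturality and monoidality of $\theta$, and (v) the pseudofunctoriality constraints of $F$ and $F'$; keeping these isomorphisms straight is the actual work, though it is of the same flavour as --- and no harder than --- the square displayed in the proof of \cref{lem:MonGroth}. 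If one prefers to avoid appealing to functoriality of $\Fr$, the alternative is to verify the pseudo-double-functor axioms for $\lH$ directly on the formulas above; the hard clause is then the same one, namely that $\lH$ preserves horizontal composition up to the stated globular isomorphism, with the symmetric-monoidal clauses following by running the same coherence in parallel with the braidings.
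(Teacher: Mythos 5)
Your proposal is correct and follows essentially the paper's route: construct a (symmetric) strong monoidal morphism of Beck--Chevalley monoidal bifibrations $\A/U \to \A'/U'$ lying over $H$, and apply Shulman's 2-functor $\Fr$ to it. The only difference is presentational: the paper builds the comma-category functor $G \maps \A/U \to \A'/U'$ directly from the data $(H,E,\theta)$, whereas you first package $(H,E,\theta)$ as a monoidal opfibration map $\widetilde{H} \maps \inta F \to \inta F'$ over $H$ and then apply the comma-category construction; both yield the same functor and the same coherence verifications.
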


\begin{proof}
From the lax monoidal pseudofunctors $F$ and $F'$ we obtain Beck--Chevalley monoidal bifibrations $\Phi \maps \A/U \to \A$ and $\Phi' \maps \A'/U' \to \A'$ as in \cref{thm:decorated_cospans_1}.  In what follows we construct a strong monoidal functor $G \maps \A/U \to \A'/U'$ that makes this square commute:
\[
\begin{tikzpicture}[scale=1.5]
\node (A) at (0,0) {$\A/U$};
\node (B) at (1,0) {$\A'/U'$};
\node (C) at (0,-1) {$\A$};
\node (D) at (1,-1) {$\A'$};
\path[->,font=\scriptsize,>=angle 90]
(A) edge node[above]{$G$} (B)
(A) edge node[left]{$\Phi$} (C)
(B) edge node[right]{$\Phi'$} (D)
(C) edge node[below]{$H$} (D);
\end{tikzpicture}
\]
Such a commuting square is a morphism in Shulman's 2-category of Beck--Chevalley monoidal bifibrations.  Applying Shulman's 2-functor $\mathbb{F}\mathbf{r}$ to this morphism \cite[Theorem 14.11]{Shulman2008}, we obtain the desired double functor
\[ \mathbb{H} \maps  F\lCsp \to F'\lCsp .\]

We described $\A/U$ and its monoidal structure in the proof of \cref{thm:decorated_cospans_1},
and of course the case of $\A'/U'$ is analogous.   We define $G \maps \A/U \to \A'/U'$ as follows.  On objects $G$ is given by
\[  G(a \xrightarrow{i} b, s) \; = \; (H(a) \xrightarrow{H(i)} H(b), \underline{s}), \]
where to define $\underline{s}$ we treat $s \in F(b)$ as a functor
$s \maps 1 \to F(b)$ and let $\underline{s} \in F'(H(b))$ be the composite functor
\[  1 \xrightarrow{\phi_0} E(1) \xrightarrow{E(s)} E(F(b)) \xrightarrow{\;\theta_b\;} F'(H(b)) .\]
On morphisms $(f,g,h) \maps (a \xrightarrow{i} b, s) \; \to \;
(a' \xrightarrow{i'} b', s')$, $G$ is given by
\[   G(f,g,h) = (H(f),H(g),\underline{h}) \]
where to define $\underline{h}$ we treat $h \maps F(g)(s) \to s'$ as a natural transformation
\[
\begin{tikzpicture}[scale=1.5]
\node (G) at (-3.5,-0.5) {$1$};
\node (H) at (-2,0) {$F(b)$};
\node (I) at (-2,-1) {$F(b')$};
\node (J) at (-2.5,-0.5) {$\scriptstyle\Downarrow h$};
\path[->,font=\scriptsize,>=angle 90]
(G) edge node [above] {$s$} (H)
(G) edge node [below] {$s'$} (I)
(H) edge node [right] {$F(g)$} (I);
\end{tikzpicture}
\]
and let $\underline{h} \maps F'(H(g))(\underline{s}) \to \underline{s}'$ be this
natural transformation:
\[
\begin{tikzpicture}[scale=1.5]
\node (A) at (-0.25,-0.5) {$1$};
\node (L) at (0.5,-0.5) {$E(1)$};
\node (B) at (2,0) {$E(F(b))$};
\node (D) at (2,-1) {$E(F(b'))$};
\node (C) at (1.5,-0.5) {$\scriptstyle\Downarrow E(h)$};
\node (E) at (4,0) {$F^\prime(H(b))$};
\node (F) at (4,-1) {$F^\prime(H(b'))$.};
\path[->,font=\scriptsize,>=angle 90]
(A) edge node [above] {$\phi_0$} (L)
(B) edge node [above] {$\theta_b$} (E)
(D) edge node [above] {$\theta_{b'}$} (F)
(E) edge node [right] {$F'(H(g))$} (F)
(L) edge node[above]{$E(s)$} (B)
(L) edge node[below]{$E(s')$} (D)
(B) edge node[right]{$E(F(g))$} (D);
\end{tikzpicture}
\]
One can check that with these definitions $G$ is a functor.

To make $G$ into a monoidal functor, we need to equip it with a laxator
\[  \gamma \maps G(a \xrightarrow{i} b, s) \otimes G(a' \xrightarrow{i'} b', s') \;
\to \; G((a \xrightarrow{i} b, s) \otimes (a' \xrightarrow{i'} b', s')).\]
Note that
\[   G(a \xrightarrow{i} b, s) \otimes G(a' \xrightarrow{i'} b', s') =
\left(H(a) + H(a') \xrightarrow{H(i) + H(i')} H(b) + H(b') , \; \phi'_{H(b), H(b')}(\underline{s},
\underline{s}')\right) \]
where $\phi'$ is the laxator for $F'$, while
\[    G((a \xrightarrow{i} b, s) \otimes (a' \xrightarrow{i'} b', s')) =
\left(H(a+a') \xrightarrow{H(i+i')} H(b+b'), \; \underline{\phi_{b,b'}(s,s')} \right)
\]
where $\phi$ is the laxator for $F$.  Since $H$ preserves finite colimits, the laxator
$\gamma$ is obvious except for the morphism from $\phi'_{H(b), H(b')}(\underline{s},
\underline{s}')$ to $\underline{\phi_{b,b'}(s,s')}$.   This is given by the following natural
transformation:
\[
\begin{tikzpicture}[scale=1.5]
\node () at (3,-0.5) {$\scriptscriptstyle{\stackrel{E_{s,s'}} \cong}$};
\node () at (1.5,-0.5) {$\scriptscriptstyle{\stackrel{\lambda} \cong}$};
\node (L1) at (1,0) {$\scriptscriptstyle{1 \times E(1)}$};
\node (L2) at (1,-1) {$\scriptscriptstyle{E(1)}$};
\node (K) at (0,0) {$\scriptscriptstyle{1 \times 1}$};
\node (K2) at (0,-1) {$\scriptscriptstyle{1}$};
\node (B) at (2,0) {$\scriptscriptstyle{E(1) \times E(1)}$};
\node (D) at (2,-1) {$\scriptscriptstyle{E(1 \times 1)}$};
\node (E) at (4,0) {$\scriptscriptstyle{E(F(b)) \times E(F(b'))}$};
\node (F) at (4,-1) {$\scriptscriptstyle{E(F(b) \times F(b'))}$};
\node (G) at (6,0) {$\scriptscriptstyle{F'(H(b)) \times F'(H(b'))}$};
\node (H) at (8,0) {$\scriptscriptstyle{F'(H(b)+H(b'))}$};
\node (I) at (6,-1) {$\scriptscriptstyle{E(F(b+b'))}$};
\node (J) at (8,-1) {$\scriptscriptstyle{F'(H(b+b'))}$};
\path[->,font=\scriptsize,>=angle 90]
(K) edge node [left] {$\lambda$} (K2)
(K) edge node [above] {$1 \times E_0$} (L1)
(K2) edge node [below] {$E_0$} (L2)
(H) edge node [right] {$F'(H_{b,b'})$.} (J)
(E) edge node [above] {$\theta_b \times \theta_{b'}$} (G)
(G) edge node [above] {$\phi'_{H(b),H(b')}$} (H)
(F) edge node [below] {$E(\phi_{b,b'})$} (I)
(I) edge node [below] {$\theta_{b+b'}$} (J)
(B) edge node [above] {$E(s) \times E(s')$} (E)
(D) edge node [below] {$E(s \times s')$} (F)
(E) edge node [right] {$E_{F(b),F(b')}$} (F)
(L1) edge node[above]{$E_0 \times 1$} (B)
(L1) edge node [left] {$\lambda$} (L2)
(L2) edge node[below]{$E(\lambda^{-1})$} (D)
(B) edge node[right]{$E_{1,1}$} (D);
\end{tikzpicture}
\]
We also need a natural transformation expressing lax preservation of the unit object by $G$, which
is defined similarly.   One can show that this laxator and unit morphism satisfy the coherence laws of a lax monoidal functor, and they are invertible, so $G$ is strong monoidal functor.   If furthermore $F, F'$ and $E$ are symmetric, one can check that $G$ is symmetric monoidal, and that $\lH$ is a symmetric monoidal double functor.
\end{proof}

\section{Structured cospans} \label{Structured}

Structured cospans are an alternative approach to equipping the apex of a cospan with
extra data \cite{BC}.    Here we recall this formalism and give a new construction of the double category of structured cospans that clarifies their
relation to decorated cospans.   This new construction again uses Shulman's \cref{lem:shulman}. For concrete examples of this formalism, see
\cref{Applications}.

\begin{thm}\label{thm:structured_cospans}
Given categories $\A$ and $\X$ with finite colimits and $L \maps \A \to \X$ a functor preserving finite colimits, there is a symmetric monoidal double category ${}_L\lCsp(\X)$ in which
\begin{itemize}
\item an object is an object of $\A$,
\item a vertical 1-morphism is a morphism of $\A$,
\item a horizontal 1-cell from $a$ to $b$ is an $L$-\define{structured cospan}, that is,
a diagram in $\X$ of the form
\begin{displaymath}
\begin{tikzpicture}[scale=1.5]
\node (A) at (0,0) {$L(a)$};
\node (B) at (1,0) {$x$};
\node (C) at (2,0) {$L(b)$,};
\path[->,font=\scriptsize,>=angle 90]
(A) edge node[above]{$i$} (B)
(C) edge node[above]{$o$} (B);
\end{tikzpicture}
\end{displaymath}
\item a 2-morphism is a \define{map of} $L$-\define{structured cospans}, that is, a commutative diagram in $\X$ of the form
\begin{displaymath}
\begin{tikzpicture}[scale=1.5]
\node (A) at (0,0) {$L(a)$};
\node (B) at (1,0) {$x$};
\node (C) at (2,0) {$L(b)$};
\node (A') at (0,-1) {$L(a')$};
\node (B') at (1,-1) {$x'$};
\node (C') at (2,-1) {$L(b')$.};
\path[->,font=\scriptsize,>=angle 90]
(A) edge node[above]{$i$} (B)
(C) edge node[above]{$o$} (B)
(A') edge node[below]{$i'$} (B')
(C') edge node[below]{$o'$} (B')
(A) edge node [left]{$L(f)$} (A')
(B) edge node [left]{$\alpha$} (B')
(C) edge node [right]{$L(g)$} (C');
\end{tikzpicture}
\end{displaymath}
\end{itemize}
Vertical composition is done using composition in $\A$, while horizontal composition
is done using pushouts in $\X$. The tensor product of two horizontal 1-cells is
\[
\begin{tikzcd}[column sep={.4in,between origins}, row sep=.08in]
& x &&&& x' &&&& x+x' & \\
&&&\ot&&&& = &&&& \\
L(a)\ar[uur,"i" pos=0.3]&& L(b)\ar[uul,"o"' pos=0.3] && L(a')\ar[uur,"i'" pos=0.3] && L(b')\ar[uul,"o'"' pos=0.3] && L(a+a')\ar[uur,"i+i'" pos=0.3] && L(b+b')\ar[uul,"o+o'"' pos=0.3]
\end{tikzcd}
\]
where $i + i'$ and $o + o'$ are defined using the fact that $L$ preserves binary coproducts, and tensor product of two 2-morphisms is given by:
\[
\begin{tikzpicture}[scale=1.5]
\node (E) at (3,0) {$L(a_1)$};
\node (F) at (5,0) {$L(b_1)$};
\node (G) at (4,0) {$x_1$};
\node (E') at (3,-1) {$L(a_2)$};
\node (F') at (5,-1) {$L(b_2)$};
\node (G') at (4,-1) {$x_2$};
\node (E'') at (6.5,0) {$L(a_1')$};
\node (F'') at (8.5,0) {$L(b_1')$};
\node (G'') at (7.5,0) {$x_1'$};
\node (E''') at (6.5,-1) {$L(a_2')$};
\node (F''') at (8.5,-1) {$L(b_2')$};
\node (G''') at (7.5,-1) {$x_2'$};
\node (X) at (5.75,-0.5) {$\otimes$};
\node (E'''') at (4,-2) {$L(a_1 + a_1')$};
\node (F'''') at (7.5,-2) {$L(b_1 + b_1')$};
\node (G'''') at (5.75,-2) {$x_1 + x_1'$};
\node (E''''') at (4,-3) {$L(a_2 + a_2')$};
\node (F''''') at (7.5,-3) {$L(b_2 + b_2').$};
\node (G''''') at (5.75,-3) {$x_2 + x_2'$};
\node (Y) at (3,-2.5) {$=$};
\path[->,font=\scriptsize,>=angle 90]
(F) edge node[above]{$o_1$} (G)
(E) edge node[left]{$L(f)$} (E')
(F) edge node[right]{$L(g)$} (F')
(G) edge node[left]{$\alpha$} (G')
(E) edge node[above]{$i_1$} (G)
(E') edge node[below]{$i_2$} (G')
(F') edge node[below]{$o_2$} (G')
(F'') edge node[above]{$o_1'$} (G'')
(E'') edge node[left]{$L(f')$} (E''')
(F'') edge node[right]{$L(g')$} (F''')
(G'') edge node[left]{$\alpha'$} (G''')
(E'') edge node[above]{$i_1'$} (G'')
(E''') edge node[below]{$i_2'$} (G''')
(F''') edge node[below]{$o_2'$} (G''')
(F'''') edge node[above]{$o_1 + o_1'$} (G'''')
(E'''') edge node[left]{$L(f + f')$} (E''''')
(F'''') edge node[right]{$L(g + g')$} (F''''')
(G'''') edge node[left]{$\alpha + \alpha'$} (G''''')
(E'''') edge node[above]{$i_1 + i_1'$} (G'''')
(E''''') edge node[below]{$i_2 + i_2'$} (G''''')
(F''''') edge node[below]{$o_2 + o_2'$} (G''''');
\end{tikzpicture}
\]
\end{thm}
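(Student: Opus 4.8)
The plan is to proceed exactly as in the proof of \cref{thm:decorated_cospans_1}: realize ${}_L\lCsp(\X)$ as $\Fr(\Phi)$ for a suitable Beck--Chevalley symmetric monoidal bifibration $\Phi$ and then apply \cref{lem:shulman}. The right bifibration is the projection $\Phi\maps (L\downarrow\X)\to\A$ out of the comma category of $L\maps\A\to\X$ and $\id_\X$, whose objects are triples $(a\in\A,\ x\in\X,\ L(a)\xrightarrow{j}x)$ and whose morphisms $(a,x,j)\to(a',x',j')$ are pairs $(f\maps a\to a',\ \alpha\maps x\to x')$ making the evident square commute; $\Phi$ sends $(a,x,j)$ to $a$ and $(f,\alpha)$ to $f$. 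Since $L$ preserves binary coproducts, an object of $(L\downarrow\X)$ lying over $a+b$ is the same datum as a cospan $L(a)\to x\leftarrow L(b)$, and a morphism lying over $f+g$ is exactly a map of $L$-structured cospans; thus the horizontal 1-cells and 2-morphisms of $\Fr(\Phi)$ are those in the statement, and the vertical category is $\A$. It remains to check the hypotheses of \cref{lem:shulman}.

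First, because $\A$ and $\X$ have finite colimits and $L$ preserves them, $(L\downarrow\X)$ has finite colimits, formed componentwise, and $\Phi$ preserves them; giving $(L\downarrow\X)$ its cocartesian monoidal structure makes $\Phi$ a strict (hence symmetric strong) monoidal functor, with $(a,x,j)\otimes(a',x',j')=(a+a',\ x+x',\ L(a+a')\cong L(a)+L(a')\xrightarrow{j+j'}x+x')$. Next, $\Phi$ is a bifibration: a cartesian lift of $f\maps a\to a'$ at $(a',x',j')$ is $(f,\id_{x'})\maps(a,\ x',\ L(a)\xrightarrow{L(f)}L(a')\xrightarrow{j'}x')\to(a',x',j')$, while a cocartesian lift of $f$ at $(a,x,j)$ is the canonical map into $(a',\ x+_{L(a)}L(a'),\ L(a')\to x+_{L(a)}L(a'))$ built from the pushout of $j$ along $L(f)$, its universality following from the universal property of that pushout just as in the decorated case. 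These (co)cartesian lifts are preserved by $\otimes$ because binary coproducts commute with pushouts, so $\Phi$ is a monoidal bifibration.

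Finally, for the Beck--Chevalley condition one must show that for a pushout square in $\A$ the comparison $\theta\maps k_!h^*\To f^*g_!$ between reindexing functors of the fibers is invertible. As in \cref{thm:decorated_cospans_1}, $\theta$ arises from the universal property of a pushout in $\X$; when the chosen square in $\A$ is a pushout, applying the colimit-preserving functor $L$ gives a pushout square in $\X$, and pasting it with the pushout defining $g_!$ exhibits $x+_{L(a)}L(c)$ and $x+_{L(b)}L(d)$ as colimits of the same diagram, so $\theta$ is an isomorphism. Hence $\Phi$ is a Beck--Chevalley symmetric monoidal bifibration, and \cref{lem:shulman} yields a symmetric monoidal double category $\Fr(\Phi)$, which we define to be ${}_L\lCsp(\X)$; unwinding Shulman's tensor product on $\Fr(\Phi)$ and using that $L$ preserves binary coproducts recovers the tensor products of horizontal 1-cells and 2-morphisms displayed in the statement.

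The main obstacle will be the same as in \cref{thm:decorated_cospans_1}: pinning down the cartesian and cocartesian lifts of $\Phi$ together with their compatibility with $\otimes$, and verifying that the Beck--Chevalley square is a pushout after pasting. Beyond that, the work is the routine but delicate bookkeeping needed to confirm that Shulman's abstract tensor structure on $\Fr(\Phi)$ coincides on the nose with the explicit description of $\otimes$ in the theorem, keeping careful track of the coherence isomorphism $L(a+b)\cong L(a)+L(b)$ throughout.
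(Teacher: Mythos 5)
Your proposal is correct and follows exactly the paper's own proof: realize ${}_L\lCsp(\X)$ as $\Fr(\Psi)$ for the projection $\Psi \maps L/\X \to \A$ from the comma category (which you call $\Phi$), verify it is a Beck--Chevalley symmetric monoidal bifibration using the same cartesian lift (precomposition with $L(f)$), the same cocartesian lift (pushout along $L(f)$), and the same pushout-pasting argument for the Beck--Chevalley condition via $L$ preserving pushouts. The only difference is notational.
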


\begin{proof}
This was proved in \cite[Theorems~2.3 \& 3.9]{BC}, where all the structures are specified in detail.  In fact, the double category structure only requires that $\X$ have pushouts, whereas the symmetric monoidal structure also requires that $\X$ and $\A$ have finite coproducts and that $L$ preserve these \cite[Theorem~3.2.3]{CourserThesis}.

Our new proof is analogous to that of \cref{thm:decorated_cospans_1} in that we apply \cref{lem:shulman} to a Beck--Chevalley symmetric monoidal bifibration $\Psi \maps L/\X \to \A$,
and define ${}_L \lCsp(\X)$ to be $\Fr(\Psi)$.    First, note that in the comma category $L/\X$:
\begin{itemize}
\item an object is a pair $(a, L(a) \xrightarrow{} x)$ consisting of an object $a \in \A$ and a morphism $L(a) \xrightarrow{i} x$ in $\X$;
\item a morphism from $(a,L(a) \xrightarrow{i} x)$ to $(a',L(a') \xrightarrow{i'} x')$ is a pair $(f,g)$ of morphisms $f \maps a \to a'$ and $g \maps x \to x'$ such that this square commutes:
\[
\begin{tikzpicture}[scale=1.5]
\node (A) at (0,0) {$L(a)$};
\node (B) at (1.5,0) {$L(a')$};
\node (C) at (0,-1) {$x$};
\node (D) at (1.5,-1) {$x'$.};
\path[->,font=\scriptsize,>=angle 90]
(A) edge node[above]{$L(f)$} (B)
(A) edge node[left]{$i$} (C)
(B) edge node[right]{$i'$} (D)
(C) edge node[below]{$g$} (D);
\end{tikzpicture}
\]
\end{itemize}
Next, we define the functor $\Psi \maps L/\X \to \A$ on objects by $\Psi(a,L(a) \xrightarrow{i} x)=a$ and on morphisms by $\Psi(f,g)=f$.

The comma category $L/\X$ has finite colimits because $\A$ and $\X$ have finite colimits and $L$
preserves them.   It thus becomes symmetric monoidal with the chosen finite coproducts providing the monoidal structure.   The tensor product of two objects $(a,L(a) \xrightarrow{i} x)$ and $(a,L(a) \xrightarrow{i'} x')$ is given by
\[  (a,L(a) \xrightarrow{i} x) + (a',L(a') \xrightarrow{i'} x')=(a+a',L(a+a')  \xrightarrow{\sim} L(a)+L(a') \xrightarrow{i+i'} x+x'). \]
The monoidal unit is $(0_\A, L(0_\A) \xrightarrow{!} 0_\X)$.  It is clear that $\Psi \maps (L/\X, +) \to (\A,+)$ is a strict monoidal functor.

The functor $\Psi$ is both a fibration and an opfibration. Suppose we are given a morphism $f \maps a \to b$ in $\A$.  For any object $(b,L(b) \xrightarrow{i} x) \in L/\X$ over $b$, a cartesian lifting of $f$ is given by
\[ (f,1_x) \maps (a,L(a) \xrightarrow{i L(f)} x) \; \to \; (b,L(b) \xrightarrow{i} x) . \]
For any object $(a,L(a) \xrightarrow{i} x)$ in $L/\X$ over $a$, a cocartesian lifting of $f$ is given by
\[ (f,\kappa) \maps (a,L(a) \xrightarrow{i} x) \to (b,L(b) \xrightarrow{\theta} x+_{L(a)} L(b)) \]
where $\kappa$ and $\theta$ arise from the pushout in this diagram:
\[
\begin{tikzpicture}[scale=1.5]
\node (A) at (0,0) {$L(a)$};
\node (B) at (2.25,0) {$L(c)$};
\node (C) at (1.5,1) {$L(b)$};
\node (D) at (0,-2) {$x$};
\node (E) at (2.25,-2) {$y$};
\node (F) at (1.5,-1) {$x+_{L(a)} L(b)$};
\path[->,font=\scriptsize,>=angle 90]
(D) edge [dashed] node [above] {$\kappa$} (F)
(F) edge [dashed] node [left] {$\exists ! \psi$} (E)
(D) edge node [above] {$g'$} (E)
(A) edge node [left] {$i$} (D)
(C) edge [dashed] node [left,near end] {$\theta$} (F)
(B) edge node {$$} (E)
(A) edge node[above]{$L(f')$} (B)
(C) edge node [right] {$L(p)$} (B)
(A) edge node[left]{$L(f)$} (C);
\end{tikzpicture}
\]
This diagram also gives the proof that $(f,\kappa)$ is a cocartesian lifting of $f$.
Both cartesian and cocartesian liftings are clearly preserved by the tensor product of $L/\X$, so
$\Psi$ is a monoidal bifibration.

Lastly, we show that $\Psi \maps L/\X \to \A$ satisfies the Beck--Chevalley condition.
For this, given a pushout square in $\A$:
\[
\begin{tikzpicture}[scale=1.5]
\node (A) at (0,0) {$a$};
\node (B) at (1,0) {$b$};
\node (C) at (0,-1) {$c$};
\node (D) at (1,-1) {$d$};
\path[->,font=\scriptsize,>=angle 90]
(A) edge node[above]{$h$} (B)
(A) edge node[left]{$k$} (C)
(B) edge node[right]{$g$} (D)
(C) edge node[below]{$f$} (D);
\end{tikzpicture}
\]
we need to show that the natural transformation
\[
\begin{tikzpicture}[scale=1.5]
\node (A) at (0,0) {$(L/\X)_a$};
\node (B) at (1.5,0) {$(L/\X)_b$};
\node (C) at (0,-1) {$(L/\X)_c$};
\node (D) at (1.5,-1) {$(L/\X)_d$};
\node (E) at (0.75,-0.5) {$\scriptstyle\Downarrow \theta$};
\path[->,font=\scriptsize,>=angle 90]
(B) edge node[above]{$h^*$} (A)
(A) edge node[left]{$k_!$} (C)
(B) edge node[right]{$g_!$} (D)
(D) edge node[below]{$f^*$} (C);
\end{tikzpicture}
\]
defined in \eqref{theta} is an isomorphism.  Let $(b,L(b) \xrightarrow{i} x)$ be an object of $(L/\X)_b$.  Going left, we precompose with $L(h) \maps L(a) \to L(b)$ to obtain the object $(a,L(a) \xrightarrow{i  L(h)} x)$ in $(L/\X)_a$.  Then going down, we push forward along $L(k) \maps L(a) \to L(c)$:
\[
\begin{tikzpicture}[scale=1.5]
\node (A) at (0,0) {$L(a)$};
\node (B) at (1.5,0) {$x$};
\node (C) at (0,-1) {$L(c)$};
\node (D) at (1.5,-1) {$L(c)+_{L(a)}x$};
\node at (1.1,-0.7) {\Large{$\ulcorner$}};
\path[->,font=\scriptsize,>=angle 90]
(A) edge node[above]{$i L(h)$} (B)
(A) edge node[left]{$L(k)$} (C)
(B) edge node[right]{$$} (D)
(C) edge node[below]{$\psi$} (D);
\end{tikzpicture}
\]
to obtain the object $(c,L(c) \xrightarrow{\psi} L(c)+_{L(a)} x)$ of $(L/ \X)_c$. For the other route, first going down, we push the object $(b,L(b) \xrightarrow{i} x)$ forward along $L(g) \maps L(b) \to L(d)$ by taking the following pushout:
\[
\begin{tikzpicture}[scale=1.5]
\node (A) at (0,0) {$L(b)$};
\node (B) at (1.5,0) {$x$};
\node (C) at (0,-1) {$L(d)$};
\node (D) at (1.5,-1) {$L(d)+_{L(b)}x$};
\node at (1.1,-0.7) {\Large{$\ulcorner$}};
\path[->,font=\scriptsize,>=angle 90]
(A) edge node[above]{$i$} (B)
(A) edge node[left]{$L(g)$} (C)
(B) edge node[right]{$$} (D)
(C) edge node[below]{$\phi$} (D);
\end{tikzpicture}
\]
which yields the object $(d,L(d) \xrightarrow{\phi} L(d)+_{L(b)} x)$ of $(L/\X)_d$.  Then going left, we precompose with $L(f) \maps L(c) \to L(d)$ to obtain the object $(c,L(c) \xrightarrow{\phi L(f)} L(d)+_{L(b)} x)$ of $(L/\X)_c$.

The natural transformation $\theta$ gives a morphism
\[   \theta \maps (c,L(c) \xrightarrow{\psi} L(c)+_{L(a)} x) \; \to \; (c,L(c) \xrightarrow{\phi  L(f)} L(d)+_{L(b)} x) \]
in $(L/\X)_c$.   As in the proof of \cref{thm:decorated_cospans_1}, this arises from the universal property of the pushout:
\[
\begin{tikzpicture}[scale=1.5]
\node (A) at (0,0) {$L(b)$};
\node (B) at (1.5,0) {$x$};
\node (C) at (0,-1) {$L(d)$};
\node (D) at (1.5,-1) {$L(d)+_{L(b)}x$};
\node (E) at (-1.5,0) {$L(a)$};
\node (F) at (-1.5,-1) {$L(c)$};
\node (G) at (2.5,-2) {$L(c)+_{L(a)}x$};
\node at (1.1,-0.7) {\Large{$\ulcorner$}};
\path[->,font=\scriptsize,>=angle 90]
(E) edge node [left] {$L(k)$} (F)
(E) edge node [above] {$L(h)$} (A)
(F) edge node [below] {$L(f)$} (C)
(A) edge node[above]{$i$} (B)
(A) edge node[left]{$L(g)$} (C)
(B) edge node[right]{$$} (D)
(C) edge node[below]{$\phi$} (D)
(F) edge [bend right] node[below]{$\psi$} (G)
(G) edge [dashed] node[below left]{$\exists ! \;\zeta $} (D)
(B) edge [bend left] (G);
\end{tikzpicture}
\]
Namely, we have $\theta = (1,\zeta)$.   But when the original square in $A$ is a pushout,
the square at left above is also a pushout, since $L$ preserves finite colimits.  Since the pasting
of pushout squares is a pushout, $\zeta$ and hence $\theta$ is an isomorphism, so $\Psi$ satisfies the Beck--Chevalley condition.
\end{proof}

\section{Structured versus decorated cospans} \label{EquivDoubleCats}

We now compare structured and decorated cospans.   In \cref{thm:decorated_cospans_1} we built a double category of decorated cospans from a bifibration $\Phi \maps \A/U \to \A$.    In \cref{thm:structured_cospans}, we built a double category of structured cospans from a bifibration $\Psi \maps L/\X \to \A$ in a very similar way.   We now show that under certain conditions $L$ is left adjoint to $U$.   Whenever this happens, $\A/U$ is isomorphic to $L/\X$, by a simple and general fact about arrow categories.    The bifibrations $\Phi$ and $\Psi$ are also isomorphic.   Because  Shulman's construction in \cref{lem:shulman} is actually functorial, it follows that the double category of decorated cospans is isomorphic to the double category of structured cospans.

The key issue is thus to determine when $L$ is left adjoint to $U$.  For this, let $\Rex$ be the 2-category of categories with finite colimits, functors preserving finite colimits, and natural transformations.  Also let $\SMC$ be the 2-category of symmetric monoidal categories, symmetric strong monoidal functors and natural transformations.   Recall that for us a category $\C \in \Rex$ comes with a choice of finite colimits, so it gives a specific cocartesian monoidal category $(\C,+)$, and this induces a 2-functor $\Rex \to \SMC$.

Our main result is this:

\begin{thm} \label{thm:equiv}
Suppose $\A$ has finite colimits and $F \maps (\A,+) \to (\Cat,\times)$ is a symmetric lax monoidal pseudofunctor. If the corresponding pseudofunctor $F \maps \A \to \SMC$ from \cref{lem:MonGroth}
 factors through the above 2-functor $\Rex\to\SMC$, then the symmetric monoidal double categories $F\lCsp$ of decorated cospans and ${}_L\lCsp(\inta F)$ of structured
cospans are isomorphic, where $L \maps \A \to \inta F$ is a left adjoint of the induced Grothendieck opfibration $U \maps \inta F \to \A$.
\end{thm}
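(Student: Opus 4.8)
The plan is to deduce everything from \cref{lem:shulman} together with a single adjunction. The hypothesis that $F \maps \A \to \SMC$ factors through $\Rex$ will be used to show that the Grothendieck opfibration $U \maps \inta F \to \A$ has a left adjoint $L$; this will make the comma categories $\A/U$ and $L/\X$ (with $\X = \inta F$) isomorphic over $\A$ — a general fact about adjunctions — and this isomorphism will be compatible with the symmetric monoidal bifibration structures on $\Phi \maps \A/U \to \A$ and $\Psi \maps L/\X \to \A$ used in the proofs of \cref{thm:decorated_cospans_1} and \cref{thm:structured_cospans}. Since Shulman's construction $\Fr$ is a $2$-functor on Beck--Chevalley symmetric monoidal bifibrations \cite[Theorem 14.11]{Shulman2008}, and any functor preserves isomorphisms, applying it to an isomorphism $\Phi \iso \Psi$ produces the desired isomorphism $F\lCsp = \Fr(\Phi) \iso \Fr(\Psi) = {}_L\lCsp(\X)$ of symmetric monoidal double categories.

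\emph{Step 1: the left adjoint.} Write $\X = \inta F$. Because $F$ factors through $\Rex$, every fiber $F(a)$ has finite colimits and every reindexing functor $F(f)$ preserves them; together with $\A$ having finite colimits this gives, by the usual construction of colimits in a Grothendieck category, that $\X$ has finite colimits and that $U \maps \X \to \A$ preserves them. Define $L \maps \A \to \X$ by $L(a) = \bigl(a, 0_{F(a)}\bigr)$, where $0_{F(a)}$ is the chosen initial object of the fiber $F(a)$. A morphism $L(a) \to (b,t)$ in $\X$ is a pair consisting of $f \maps a \to b$ in $\A$ together with a morphism $F(f)\bigl(0_{F(a)}\bigr) \to t$ in $F(b)$; since $F(f)$ preserves initial objects this second datum is unique, so
\[ \Hom_{\X}\bigl(L(a),(b,t)\bigr) \iso \Hom_{\A}\bigl(a,U(b,t)\bigr) \]
naturally in $a$ and $(b,t)$. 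Hence $L \adj U$, with identity unit. Being a left adjoint, $L$ preserves all colimits, in particular finite ones, so $L$ satisfies the hypotheses of \cref{thm:structured_cospans} and ${}_L\lCsp(\X)$ is defined; this is the $L$ named in the statement.

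\emph{Step 2: matching the bifibrations.} For any adjunction $L \adj U$, transposition gives a bijection between arrows $a \to U(x)$ in $\A$ and arrows $L(a) \to x$ in $\X$; applying it to the structure morphism of each object of $\A/U$ and using naturality of the transpose for morphisms yields an isomorphism of categories $\A/U \iso L/\X$ that strictly commutes with the projections $\Phi$ and $\Psi$ down to $\A$. It remains to see that this isomorphism is symmetric monoidal and carries (co)cartesian liftings to (co)cartesian liftings. The symmetric monoidal structure on $\A/U$ (proof of \cref{thm:decorated_cospans_2}) comes from the laxator $\phi$ of $F$, i.e.\ from the symmetric monoidal structure on $\X$ of \cref{lem:MonGroth}; under the $\Rex$-factorization hypothesis that structure $\otimes_\X$ is the \emph{cocartesian} one — the restriction of \cref{lem:MonGroth} recalled after its proof, via \cref{eq:explicitstructure2} — so $L$, which preserves coproducts, is strong symmetric monoidal and $L \adj U$ is a symmetric monoidal adjunction. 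It then follows formally that $\A/U \iso L/\X$ is a symmetric monoidal isomorphism over $\A$; and since any isomorphism of categories transports cartesian and cocartesian liftings, and both $\Phi$ and $\Psi$ were already shown to be Beck--Chevalley, we obtain $\Phi \iso \Psi$ as Beck--Chevalley symmetric monoidal bifibrations. Feeding this into $\Fr$ finishes the proof.

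The step I expect to require the most care is the monoidal bookkeeping in Step 2: one must reconcile the two incarnations of $\X = \inta F$ — as the Grothendieck category carrying the $\phi$-induced structure of \cref{lem:MonGroth}, and as a cocartesian monoidal category — and check in particular that $L(a) = \bigl(a, 0_{F(a)}\bigr)$ is compatibly monoidal, i.e.\ that $\phi_{a,a'}\bigl(0_{F(a)},0_{F(a')}\bigr)$ is canonically the initial object of $F(a+a')$. This is forced by $L$ being a left adjoint, hence coproduct-preserving, but verifying that the transpose isomorphism respects the \emph{symmetric monoidal} structure, and not merely the underlying categories together with their projections to $\A$, is where the argument is least formal. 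Everything else is either a direct check — the adjunction of Step 1, the comma-category isomorphism — or an appeal to the cited $2$-functoriality of $\Fr$ and to \cref{lem:MonGroth}.
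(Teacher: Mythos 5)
Your proposal is correct and follows essentially the same route as the paper: obtain the left adjoint $L$ (you unpack Gray's \cref{prop:opfibtolari} explicitly, while the paper cites it), transpose to get the isomorphism $\A/U \cong L/\X$ over $\A$, check compatibility of the monoidal bifibration structures, and apply the 2-functoriality of $\Fr$. If anything, you are more explicit than the paper at the step it dismisses as ``clear''---reconciling the $\phi$-induced monoidal structure on $\A/U$ from \cref{thm:decorated_cospans_1} with the cocartesian one on $L/\X$ from \cref{thm:structured_cospans}---which is indeed the point where the hypothesis that $F$ factors through $\Rex$ (forcing $\otimes_\X$ to be the coproduct and $L$ to be strong symmetric monoidal) does its work.
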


The hypothesis of this theorem is essentially a way of asking that the fibers of the induced opfibration $U \maps \inta F \to \A$ have finite colimits which are preserved by the reindexing functors, and that the induced fiberwise monoidal structure is cocartesian.   Expanding on this a bit:
the fibers of $U$ are the same as the categories $F(a)$ for all $a \in \A$.   The lax monoidal pseudofunctor structure of $F$ gives rise to a specific symmetric monoidal structure on each category $F(a)$, as in \cref{eq:explicitstructure1}.  The hypothesis of the theorem asks that the resulting pseudofunctor $F \maps \A \to \SMC$ factors through $\Rex$.  This implies
that the symmetric monoidal structure on each category $F(a)$ is cocartesian.

It will be important to know that under the hypotheses of
\cref{thm:equiv} the opfibration $U$ preserves all finite colimits.   For this we need the following lemma.

\begin{lem}[\textbf{Hermida}] \label{lem:fibrewiselimits}
Suppose $\J$ is a small category and $U \maps \X \to \A$ is an opfibration where the base $\A$ has $\J$-colimits.  Then the following are equivalent:
\begin{enumerate}
 \item all fibers have $\J$-colimits, and the reindexing functors preserve them;
 \item the total category $\X$ has $\J$-colimits, and $U$ preserves them.
\end{enumerate}
Moreover, if $\X$ has $\J$-colimits and $U$ preserves them, for any choice of $\J$-colimits in $\A$, they can be chosen in $\X$ in such a way that $U$ strictly preserves them.
\end{lem}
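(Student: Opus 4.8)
The plan is to prove this as the opfibration/colimit dual of Hermida's fibrewise-limits theorem, working throughout with a fixed cleavage for $U$ so that the reindexing functors $f_!$ and the pushforward operation make sense. The organising idea is a single explicit description of $\J$-colimits in the total category $\X$ in terms of $\J$-colimits in $\A$ together with fibrewise $\J$-colimits; once that description is in place both implications, and the `moreover' clause, fall out. Recall that for a morphism $f\maps a\to b$ in $\A$ and $x$ over $a$, the cleavage gives a cocartesian arrow $x\to f_!(x)$ over $f$ with $U(f_!(x))=b$, and cocartesianness means exactly that postcomposition $\Hom_{\X_b}(f_!(x),w)\cong\{\,\text{arrows }x\to w\text{ over }f\,\}$ naturally.

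For $(1)\Rightarrow(2)$: given $D\maps\J\to\X$, form the chosen $\bar c:=\colim_{\A}(UD)$ with colimit cocone $\bar\lambda_j\maps UD(j)\to\bar c$, and define $D'\maps\J\to\X_{\bar c}$ by $D'(j):=(\bar\lambda_j)_!D(j)$, the action on a morphism $\alpha\maps j\to j'$ being the unique fibre morphism compatible with the cocartesian lifts (this is well-typed because $\bar\lambda_{j'}\circ UD(\alpha)=\bar\lambda_j$, and functoriality follows from uniqueness). Using (1), put $c:=\colim_{\X_{\bar c}}D'$ and equip it with the cocone $\lambda_j\maps D(j)\to D'(j)\to c$. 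I claim $(c,\lambda)$ is $\colim_\X D$ and that $U$ sends it on the nose to $(\bar c,\bar\lambda)$. The universal-property check is a diagram chase: a cocone $\mu_j\maps D(j)\to z$ projects to a cocone in $\A$, hence factors through $\bar c$ by a unique $\bar\nu$; each $\mu_j$ then factors through the cocartesian lift over $\bar\lambda_j$ and then through the cocartesian lift over $\bar\nu$, producing a cocone under $\bar\nu_!\circ D'$ in the fibre $\X_{Uz}$; since reindexing functors preserve $\J$-colimits — here is where hypothesis (1) is used a second time — one has $\colim(\bar\nu_!\circ D')=\bar\nu_!(c)$, so the induced fibre map composed with the cocartesian lift $c\to\bar\nu_!(c)$ is the required factorisation, unique by running the same steps in reverse. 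Read with the \emph{chosen} base colimit this construction also yields a colimit in $\X$ lying strictly over $\bar c$, which is the content of the `moreover' clause in this direction; in general, starting from (2) one obtains it by transport: given any $\X$-colimit $(c_0,\lambda^0)$ of $D$, the unique comparison isomorphism $\omega\maps Uc_0\xrightarrow{\sim}\bar c$ lifts to a (necessarily invertible) cocartesian arrow $c_0\to\omega_!(c_0)$, and replacing $(c_0,\lambda^0)$ by $\omega_!(c_0)$ with cocone $\omega_!(-)\circ\lambda^0$ gives a colimit in $\X$ over $\bar c$ with cocone over $\bar\lambda$.

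For $(2)\Rightarrow(1)$: given a fibre diagram $D\maps\J\to\X_a$, view it in $\X$ and set $\tilde c:=\colim_\X D$; since $U$ preserves colimits, $U\tilde c=\colim_{\A}\Delta_a=:a^{+}$, where $\Delta_a$ is the constant diagram at $a$, and there is a canonical fold $\nabla\maps a^{+}\to a$ induced by the cocone $\{\id_a\}_j$, satisfying $\nabla\circ\bar\lambda_j=\id_a$. Then $\nabla_!(\tilde c)$ is $\colim_{\X_a}D$: a fibre cocone under $D$ is in particular an $\X$-cocone, the map it induces out of $\tilde c$ is forced to lie over $\nabla$, and factoring it through the cocartesian lift $\tilde c\to\nabla_!(\tilde c)$ gives the desired fibre factorisation, uniqueness following from uniqueness in $\X$ together with cocartesianness. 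This shows the fibres have $\J$-colimits.

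The step I expect to cost the most care is the remaining claim that the reindexing functors preserve these fibrewise colimits in direction $(2)\Rightarrow(1)$; the naive attempts reduce it, via the $\Hom_{\X_b}(f_!(-),w)$ description, to exactly the statement being proved, so one needs an argument that genuinely uses that $\X$ has \emph{all} $\J$-colimits preserved by $U$. My plan is: for $f\maps a\to b$ and $D\maps\J\to\X_a$, the naturality of the cocartesian-lift comparison assembles the lifts $D(j)\to f_!D(j)$ into a natural transformation $\ell\maps D\Rightarrow f_!D$ of diagrams in $\X$, inducing $\bar\ell\maps\colim_\X D\to\colim_\X(f_!D)$ lying over the induced base map $f^{+}\maps a^{+}\to b^{+}$ (where $f^{+}\circ\bar\lambda_j=\bar\lambda'_j\circ f$). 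One then checks directly, from the two colimit universal properties and the cocartesianness of each $\ell_j$, that $\bar\ell$ is itself cocartesian over $f^{+}$, so $\colim_\X(f_!D)\cong(f^{+})_!\bigl(\colim_\X D\bigr)$. Combining this with the fold formula of the previous paragraph (applied at $b$, with fold $\nabla'\maps b^{+}\to b$), the identity $\nabla'\circ f^{+}=f\circ\nabla$, and pseudofunctoriality $(gh)_!\cong g_!h_!$ of the cleavage gives
\[
\colim_{\X_b}(f_!D)\;=\;\nabla'_!\,(f^{+})_!(\tilde c)\;\cong\;(\nabla' f^{+})_!(\tilde c)\;=\;(f\nabla)_!(\tilde c)\;\cong\;f_!\bigl(\nabla_!(\tilde c)\bigr)\;=\;f_!\bigl(\colim_{\X_a}D\bigr),
\]
which completes $(2)\Rightarrow(1)$.
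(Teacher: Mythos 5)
Your proof is correct, and it is genuinely more informative than what the paper offers: the paper does not prove this lemma at all, but simply cites Hermida's Corollary~4.9 (and Remark~4.11 for the final clause), read in the opfibration/colimit dual. So what you have supplied is a self-contained dual proof of a result the paper delegates to the literature.

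The structure of your argument is the right one, and the places where care is needed are handled correctly. In $(1)\Rightarrow(2)$, the construction $D'(j)=(\bar\lambda_j)_!D(j)$ over the chosen base colimit $\bar c$ is the standard reduction of a global diagram to a fibrewise one, and you correctly invoke hypothesis~(1) twice: once to form $c=\colim_{\X_{\bar c}}D'$, and again to identify $\colim(\bar\nu_!\circ D')$ with $\bar\nu_!(c)$ in the uniqueness check. The `moreover' clause then falls out of the same construction because the chosen base cocone $\bar\lambda$ is exactly what $U$ sends the constructed cocone $\lambda$ to; your transport argument via the invertible cocartesian lift $c_0\to\omega_!(c_0)$ is a correct alternative route starting from~(2). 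In $(2)\Rightarrow(1)$, you correctly identify the fold $\nabla\maps a^{+}\to a$ and the formula $\colim_{\X_a}D=\nabla_!(\colim_\X D)$, and you correctly flag that preservation by reindexing is the delicate point. Your key observation there—that the jointly-cocartesian family $\ell_j\maps D(j)\to f_!D(j)$ induces a comparison $\bar\ell$ which is itself cocartesian over $f^{+}$, so that $\colim_\X(f_!D)\cong(f^{+})_!(\colim_\X D)$—is exactly what makes the calculation via $\nabla'f^{+}=f\nabla$ and pseudofunctoriality of $(-)_!$ go through, and it genuinely uses that $U$ preserves the colimit of $f_!D$ (to know the induced map out of $\colim_\X(f_!D)$ lies over $g$). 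So no gap; if anything, it would be worth recording the cocartesianness of $\bar\ell$ as a separate sublemma, since it is the conceptual heart of the harder implication and is reusable.
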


\begin{proof}
See \cite[Corollary~4.9]{Hermida1999}, and for the final statement \cite[Remark~4.11]{Hermida1999}.
\end{proof}

The first part of this lemma asserts the existence of colimits \emph{locally} in each fiber, and if we let $\J$ range over all finite categories it says that the corresponding pseudofunctor $F \maps \A \to \Cat$ lands in the sub-2-category $\Rex$. The second part formulates the existence of colimits \emph{globally} in the total category $\inta F$, and if we let $\J$ range over all finite categories it says that $\X$ has finite colimits and $U$ preserves all finite colimits.

\begin{cor} \label{cor:fcocMonGroth}
 Suppose $\A$ has finite colimits and $F \maps (\A,+) \to (\Cat,\times)$ is a symmetric lax monoidal pseudofunctor for which the corresponding
pseudofunctor $F  \maps \A \to \SMC$ from \cref{lem:MonGroth} factors through $\Rex\to\SMC$.  Then $\inta F$ has all finite colimits and  the induced
opfibration $U\maps \inta F \to \A$ preserves them.  Moreover we can choose finite colimits in $\inta F$ so that $U$ preserves them strictly.
\end{cor}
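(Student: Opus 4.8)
The plan is to obtain this corollary as essentially an immediate packaging of Hermida's \cref{lem:fibrewiselimits}, applied to the Grothendieck opfibration $U \maps \inta F \to \A$ with the diagram shape $\J$ ranging over all finite categories.

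First I would unwind what the hypothesis buys us. By \cref{lem:MonGroth}, the symmetric lax monoidal pseudofunctor $(F,\phi,\phi_0)$ corresponds to a pseudofunctor $F \maps \A \to \SMC$ under which each fiber $\X_a = F(a)$ carries the symmetric monoidal structure of \cref{eq:explicitstructure1}, and each reindexing (pushforward) functor of $U$ is $f_! = F(f)$ with its strong monoidal structure displayed there. Asking that this pseudofunctor factors through the 2-functor $\Rex \to \SMC$ then says precisely that each category $F(a)$ has finite colimits (with its $\phi$-induced monoidal structure being the resulting cocartesian one) and that each functor $F(f)$ preserves finite colimits. In the language of \cref{lem:fibrewiselimits}, this is exactly condition (1), for every finite $\J$: all fibers of $U$ have $\J$-colimits and the reindexing functors preserve them.

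Since $\A$ has finite colimits by hypothesis, \cref{lem:fibrewiselimits} now yields condition (2), for every finite $\J$: the total category $\inta F$ has $\J$-colimits and $U$ preserves them. Letting $\J$ range over all finite diagram shapes gives that $\inta F$ has all finite colimits and $U$ preserves them; the final clause of \cref{lem:fibrewiselimits} then upgrades this to strict preservation once the chosen finite colimits in $\A$ are fixed.

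The only step requiring a little care — and it is bookkeeping rather than a genuine obstacle — is the identification underlying the phrase ``factors through $\Rex$'': one must check that the fiber categories and reindexing functors of the opfibration $U$ coincide, on the nose, with the values $F(a)$ and $F(f)$ of the pseudofunctor, and that ``$F(f)$ is a morphism in $\Rex$'' really does mean preservation of finite colimits in the sense needed by Hermida's lemma. Both are part of the Grothendieck correspondence recalled in \cref{lem:MonGroth}, so nothing new is needed; in particular the fact that the fiberwise monoidal structure of \cref{eq:explicitstructure1} is then forced to be cocartesian is a convenient byproduct but plays no role in the colimit argument itself.
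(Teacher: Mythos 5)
Your proposal is correct and is precisely the paper's argument: the paper derives the corollary by applying Hermida's \cref{lem:fibrewiselimits} with $\J$ ranging over all finite categories, noting that the factorization of $F$ through $\Rex$ is exactly condition (1) of that lemma, and then reading off condition (2) together with the strictness clause. Your remarks about the Grothendieck correspondence identifying fibers with $F(a)$ and reindexing with $F(f)$ are the same bookkeeping the paper relies on in the discussion surrounding \cref{lem:MonGroth}.
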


In what follows we also need a left adjoint $L$ to the opfibration $U$.  The following result provides sufficient conditions for that.  Following Gray \cite{Gray}, we say a functor has a `left adjoint right inverse' or \define{lari} if it has a left adjoint where the unit of the adjunction is the identity.

\begin{lem}[\textbf{Gray}] \label{prop:opfibtolari}
Let $U \maps\X \to \A$ be an opfibration.   Then $U$ has a lari if its fibers have initial objects
that are preserved by the reindexing functors.
\end{lem}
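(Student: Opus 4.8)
The plan is to build the left adjoint $L$ via the pointwise criterion for adjunctions: $U \maps \X \to \A$ has a left adjoint exactly when, for every $a \in \A$, the comma category $a \downarrow U$ has an initial object, and the value of the left adjoint at $a$ together with the unit at $a$ are then read off from that initial object. So I would exhibit, for each $a$, an initial object of $a \downarrow U$ whose structure morphism $a \to U(-)$ is $\id_a$; the adjunction produced this way automatically has identity unit, which is precisely what it means for $U$ to have a lari.

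For the construction, I would choose, for each $a \in \A$, an initial object $0_a$ of the fiber $\X_a = U^{-1}(a)$ --- these exist by hypothesis --- and claim that $(0_a,\ \id_a \maps a \to U(0_a))$ is initial in $a \downarrow U$. Concretely this means: for every object $(x,\, f \maps a \to U(x))$ of $a \downarrow U$ there is a unique morphism $\phi \maps 0_a \to x$ in $\X$ with $U(\phi) = f$. Write $b = U(x)$. Using that $U$ is an opfibration, pick a cocartesian lift $\bar f \maps 0_a \to f_!(0_a)$ of $f$; since the reindexing functor $f_!$ preserves initial objects by hypothesis, $f_!(0_a)$ is initial in $\X_b$, so there is a unique vertical morphism $u \maps f_!(0_a) \to x$, and $\phi := u\,\bar f$ then lies over $\id_b \circ f = f$, giving existence. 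For uniqueness, any $\phi$ over $f$ factors through the cocartesian lift as $\phi = v\,\bar f$ for a unique vertical $v \maps f_!(0_a) \to x$, and $v$, being a morphism out of the initial object $f_!(0_a)$ of $\X_b$, must equal $u$; hence $\phi = u\,\bar f$ is the unique such morphism.

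Granting this, the pointwise criterion yields a functor $L \maps \A \to \X$ with $L(a) = 0_a$, left adjoint to $U$, with $UL = \id_\A$ and identity unit --- that is, a lari of $U$. (Equivalently one can define $L$ by hand, sending $f \maps a \to b$ to the cocartesian lift $0_a \to f_!(0_a)$ followed by the unique vertical isomorphism $f_!(0_a) \to 0_b$; functoriality is then automatic, since a composite of cocartesian morphisms is again cocartesian and an initial object has only the identity endomorphism, so the coherence isomorphisms of the chosen cleavage collapse to identities on the objects $0_a$.) I do not anticipate any real obstacle; the one thing to be careful about is that the morphism extracted from $f_!(0_a)$ is taken \emph{in the fiber} $\X_b$, so that it is vertical and the composite genuinely lies over $f$ --- and routing the whole argument through the comma-category criterion, rather than checking functoriality of $L$ directly, keeps even that bookkeeping to a minimum.
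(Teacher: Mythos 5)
Your proof is correct and follows essentially the same route as the paper's (and Gray's original): take the initial object $\bot_a$ of each fiber as $L(a)$, and use cocartesian lifts together with fiberwise initiality preserved by reindexing to produce the adjunction with identity unit. The paper defines $L$ on morphisms directly and asserts the adjunction citing Gray, whereas you route the verification through the comma-category criterion (showing $(0_a,\id_a)$ is initial in $a \downarrow U$); this is a cleaner packaging of the same argument, and your parenthetical explicit construction of $L$ is exactly the paper's.
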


\begin{proof}
 This is \cite[Proposition~4.4]{Gray}.    Suppose each fiber $\X_a$ of the opfibration $U$ has an initial object $\bot_a$ and these objects are preserved (up to isomorphism) by the reindexing functors.   Define $L \maps \A \to \X$ on objects $a \in \A$ by $L(a) = \bot_a$.   Given a morphism $f\maps a\to a'$ in $\A$, define $L(f)$ to be the composite
\[
  \bot_a\xrightarrow{\mathrm{Cocart}(f,\bot_a)}f_!(\bot_a)\xrightarrow{\;\chi_a\;}\bot_{a'}
\]
where $\mathrm{Cocart}(f,\bot_a)$ is the cocartesian lifting of $f$ to $\bot_a$ and $\chi_a$ is the unique isomorphism between two initial objects in
the fiber above $a'$.    The functor $L$ then becomes left adjoint to $U$ with unit $\iota_a \maps a \to U(L(a))$ being the identity, using the fact
that $U(L(a)) = U(\bot_a) = a$.
\end{proof}

We now have all the necessary background to construct an isomorphism between the double category of decorated cospans and the double category of structured cospans, starting from a symmetric lax monoidal pseudofunctor $F \maps (\A, +) \to (\Cat, \times)$ whose corresponding pseudofunctor $ F\maps \A \to \SMC$ factors through $\Rex$.

\begin{proof}[Proof of \cref{thm:equiv}]
Since we are assuming the pseudofunctor $F \maps \A \to \SMC$ factors through $\Rex$,
\cref{cor:fcocMonGroth} implies that the Gro\-the\-ndieck construction gives rise to a category $\inta F$ with finite colimits, and we can choose these in such a way that the corresponding opfibration $U \maps \inta F \to \A$ strictly preserves them.  We do this in what follows.

By \cref{prop:opfibtolari}, $U$ has a left adjoint $L\maps\A\to\inta F$ with $UL=1_\A$. Diagrammatically,
\[
 F\maps\A\to\Cat\quad\mapsto\quad\begin{tikzcd}[baseline=.3]\inta F\ar[d,"U"'] \\ \A \end{tikzcd}\quad\mapsto\quad\begin{tikzcd}\A\ar[r,bend left,pos=.55,"L"]\ar[r,phantom,"\bot"description] & \inta F\ar[l,bend left,pos=.45,"U"]\end{tikzcd}
\]
describes the construction of the adjunction from the original $F$. Explicitly, the left adjoint maps each object $a \in \A$ to the initial object in its fiber $\X_a$, namely $L(a)=(a,I_a)$, where $I_a$ is defined as in \cref{eq:explicitstructure1}.

As a left adjoint, $L$ preserves all colimits that exist between the categories $\A$ and $\inta F$, which have finite colimits.   Thus, we can construct the symmetric monoidal double category of structured cospans $_L\lCsp(\inta F)$ as in \cref{thm:structured_cospans}.  To show that this is isomorphic to the symmetric monoidal double category of decorated cospans $F\lCsp$ given by  \cref{thm:decorated_cospans_2}, we use a result of Shulman \cite[Theorem~14.11]{Shulman2010}.    Namely, the $\mathbb{F}\mathbf{r}$ construction of \cref{lem:shulman} extends to a 2-functor
\[ \mathbb{F}\mathbf{r} \maps \mathbf{BCMF} \to \mathbf{FDbl} \]
where $\mathbf{BCMF}$ is the 2-category consisting of
\begin{itemize}
\item Beck--Chevalley monoidal bifibrations,
\item strong monoidal morphisms of bifibrations, and
\item monoidal transformations of bifibrations
\end{itemize}
while $\mathbf{FDbl}$ is the 2-category of
\begin{itemize}
\item fibrant double categories,
\item double functors, and
\item double transformations.
\end{itemize}
Given two monoidal bifibrations $\Phi \maps \A \to \mathsf{B}$ and $\Phi' \maps \A' \to \mathsf{B}'$, a `strong monoidal morphism of bifibrations' from $\Phi$ to $\Phi'$ consists of a pair of strong monoidal functors $F_1$ and $F_2$ making the following square commute
\[
\begin{tikzpicture}[scale=1.5]
\node (A) at (0,0) {$\A$};
\node (B) at (1,0) {$\A'$};
\node (C) at (0,-1) {$\mathsf{B}$};
\node (D) at (1,-1) {$\mathsf{B}'$};
\path[->,font=\scriptsize,>=angle 90]
(A) edge node[above]{$F_1$} (B)
(A) edge node[left]{$\Phi$} (C)
(B) edge node[right]{$\Phi'$} (D)
(C) edge node[below]{$F_2$} (D);
\end{tikzpicture}
\]
not just as functors but as strong monoidal functors.   In our case the two monoidal bifibrations are the functors $\Phi$ and $\Psi$ of Theorems \ref{thm:decorated_cospans_2} and \ref{thm:structured_cospans}, respectively.  These share $\A$ as a common base, so we only need a single strong monoidal functor $F \maps \A/U \to L/\X$ making this diagram of strong monoidal functors commute:
\[
\begin{tikzpicture}[scale=1.0]
\node (A) at (0,0) {$\A/U$};
\node (B) at (2,0) {$L/ \X$};
\node (C) at (1,-1.5) {$\A$.};
\path[->,font=\scriptsize,>=angle 90]
(A) edge node[above]{$F$} (B)
(A) edge node[left]{$\Phi$} (C)
(B) edge node[right]{$\Psi$} (C);
\end{tikzpicture}
\]
There is an isomorphism $F \maps \A/U \to L/\X$ arising from the fact that $U$ is right adjoint to $L$.   It is clear that $F$ is strong monoidal via componentwise binary coproducts which are preserved by both $L$ and $U$, and that $\Psi F=\Phi$ as strong monoidal functors.  As $F$ is an isomorphism, we obtain an isomorphism
\[  \mathbb{F}\mathbf{r}(F) \maps \mathbb{F}\mathbf{r}(\Phi) \to \mathbb{F}\mathbf{r}(\Psi)\] between the two monoidal double categories $\mathbb{F}\mathbf{r}(\Phi)=F\lCsp$ and $\mathbb{F}\mathbf{r}(\Psi)= {}_L \lCsp(\inta F)$ of Theorems \ref{thm:decorated_cospans_2} and \ref{thm:structured_cospans}, respectively.   One can check by hand that this isomorphism
is symmetric monoidal.
\end{proof}

Given the hypothesis of \cref{thm:equiv}, the isomorphism $\Fr(F)$ between decorated and
structured cospans works concretely as follows.  First recall that $\X = \inta F$, so that an object of $\X$ is a pair $(a,s)$ with $a \in \A$ and $s \in F(a)$.   The functor $U \maps \X \to \A$ maps $(a,s)$ to $a$.    Its left adjoint $L \maps \A \to \X$ maps $a \in \A$ to $(a,I_a)$, where $I_a \in F(a)$ is the \define{trivial decoration} given by the composite
\[  \one\xrightarrow{\phi_0}F(0)\xrightarrow{F(!_a)}F(a). \]
The object $L(a) = (a,I_a)$ is the initial object in the fiber of $\inta F$ over $a$.

The isomorphism $\Fr(F) \maps F\lCsp \to {}_L\lCsp(\inta F)$  is the identity on objects and vertical morphisms, which are just objects and morphisms of $\A$.   It maps the decorated cospan
\[
\begin{tikzpicture}[scale=1.5]
\node (A) at (0,0) {$a$};
\node (B) at (1,0) {$m$};
\node (C) at (2,0) {$b$,};
\node (D) at (3,0) {$s \in F(m)$};
\path[->,font=\scriptsize,>=angle 90]
(A) edge node[above]{$i$} (B)
(C) edge node[above]{$o$} (B);
\end{tikzpicture}
\]
to the structured cospan
\[
\begin{tikzpicture}[scale=1.5]
\node (A) at (0,0) {$(a,I_a)$};
\node (B) at (1.5,0) {$(m,s)$};
\node (C) at (3,0) {$(b,I_b)$.};
\path[->,font=\scriptsize,>=angle 90]
(A) edge node[above]{$(i,!)$} (B)
(C) edge node[above]{$(o,!)$} (B);
\end{tikzpicture}
\]
where $! \maps F(i)(I_a) \to s$ is the unique morphism with this domain and
codomain (recall that $F(i)$ preserves initial objects), and similarly for
$! \maps F(o)(I_b) \to s$.   Finally, $\Fr(\Phi)$ sends a map of decorated cospans:
\[
\begin{tikzpicture}[scale=1.5,baseline=(current bounding box.center)]
\node (A) at (0,0.5) {$a$};
\node (A') at (0,-0.5) {$a'$};
\node (B) at (1,0.5) {$m$};
\node (C) at (2,0.5) {$b$};
\node (C') at (2,-0.5) {$b'$};
\node (D) at (1,-0.5) {$m'$};
\node (E) at (3,0.5) {$s \in F(m)$};
\node (F) at (3,-0.5) {$s' \in F(m')$};
\node (G) at (1,-1) {$\tau \maps F(h)(s) \to s'$};
\path[->,font=\scriptsize,>=angle 90]
(A) edge node[above]{$i$} (B)
(C) edge node[above]{$o$} (B)
(A) edge node[left]{$f$} (A')
(C) edge node[right]{$g$} (C')
(A') edge node[below] {$i'$} (D)
(C') edge node[below] {$o'$} (D)
(B) edge node [left] {$h$} (D);
\end{tikzpicture}
\]
to this map of structured cospans:
\[
\begin{tikzpicture}[scale=1.5,baseline=(current bounding box.center)]
\node (A) at (0,0.5) {$(a,I_a)$};
\node (A') at (0,-0.5) {$(a',I_{a'})$};
\node (B) at (1.5,0.5) {$(m,s)$};
\node (C) at (3,0.5) {$(b,I_b)$};
\node (C') at (3,-0.5) {$(b',I_{b'})$.};
\node (D) at (1.5,-0.5) {$(m',s')$};
\path[->,font=\scriptsize,>=angle 90]
(A) edge node[above]{$(i,!)$} (B)
(C) edge node[above]{$(o,!)$} (B)
(A) edge node[left]{$L(f)$} (A')
(C) edge node[right]{$L(g)$} (C')
(A') edge node[below] {$(i',!)$} (D)
(C') edge node[below] {$(o',!)$} (D)
(B) edge node [left] {$(h,\tau)$} (D);
\end{tikzpicture}
\]

\section{Bicategorical and categorical aspects}
\label{spinoffs}

While double categories are a natural context for studying cospans, bicategories are more
familiar---and of course, \emph{categories} are even more so!   Luckily, all our results
phrased in the language of double categories have analogues for bicategories and categories.
We explain those here.

As discussed for example by Shulman \cite{Shulman2010}, any double category $\lD$ has a
\define{horizontal bicategory}, denoted $\bD$, in which:
\begin{itemize}
\item objects are objects of $\lD$,
\item morphisms are horizontal 1-cells of $\lD$,
\item 2-morphisms are \define{globular} 2-morphisms of $\lD$, meaning 2-morphisms whose source and target vertical 1-morphisms are identities,
\item composition of morphisms is given by composition of horizontal 1-cells in $\lD$,
\item vertical and horizontal composition of 2-morphisms are given by vertical and horizontal
composition of 2-morphisms in $\lD$.
\end{itemize}
The bicategory $\bD$ has a \define{decategorification}, a category $\D$ in which:
\begin{itemize}
\item objects are objects of $\bD$,
\item morphisms are isomorphism classes of morphisms of $\bD$.
\end{itemize}
Thus, the double category $F\lCsp$ of structured cospans constructed in \cref{thm:decorated_cospans_1} automatically gives rise to a bicategory $F\bCsp$, and a category $F\Csp$.   In \cref{thm:decorated_cospans_2} we gave conditions under which the double category $F\lCsp$ becomes symmetric monoidal.   We would like the bicategory $F\bCsp$ and the category $F\Csp$ to become symmetric monoidal under the same conditions, and indeed this is true.

A double category is `fibrant' if every vertical 1-morphism has a `companion' and a `conjoint'---concepts explained in \cref{def:companion}. Shulman (\cref{Shulhorizontalbicat}) proved that when a double category $\lD$ is fibrant, any symmetrical monoidal structure on $\lD$ gives one on $\bD$.    We can apply this to decorated cospans as follows:

\begin{lem}
Given a category $\A$ with finite colimits and a lax monoidal pseudofunctor $(F,\phi,\phi_0) \maps (\A,+)\to(\Cat,\times)$, the double category $F\lCsp$ is fibrant.
\end{lem}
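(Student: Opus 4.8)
The plan is to reuse the presentation of $F\lCsp$ from the proof of \cref{thm:decorated_cospans_1}, where it was built as $\Fr(\Phi)$ for the Beck--Chevalley monoidal bifibration $\Phi \maps \A/U \to \A$. A pseudo double category $\lD$ is fibrant exactly when the source--target functor $(S,T)\maps \lD_1 \to \lD_0\times\lD_0$ is a fibration (equivalently, an opfibration); and for $\Fr(\Phi)$ this functor is, by construction, the pullback of $\Phi$ along the coproduct functor $\A\times\A \to \A$, since a horizontal $1$-cell amounts to a triple $(a,b,c)$ with $\Phi(c)=a+b$ and a $2$-cell to a triple $(f,g,h)$ with $\Phi(h)=f+g$. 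As the pullback of a bifibration is a bifibration, $(S,T)$ is one as soon as $\Phi$ is, and we already verified in \cref{thm:decorated_cospans_1} that $\Phi$ is a bifibration. So fibrancy of $F\lCsp$ is immediate, and needs no hypothesis on $F$ beyond being a lax monoidal pseudofunctor; in short, the lemma is an instance of the general fact that $\Fr$ of a bifibration is fibrant \cite{Shulman2008}.

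For concreteness — and because the downstream results use fibrancy to transport the symmetric monoidal structure from $F\lCsp$ to the horizontal bicategory — I would also spell out companions and conjoints explicitly. Given a vertical $1$-morphism $f\maps a\to b$, i.e.\ a morphism of $\A$, I expect a companion to be the $F$-decorated cospan $a\xrightarrow{f}b\xleftarrow{\id_b}b$ carrying the \emph{trivial decoration} $I_b\in F(b)$, and a conjoint to be $b\xrightarrow{\id_b}b\xleftarrow{f}a$ again carrying $I_b$, where $I_b$ is the composite $\one\xrightarrow{\phi_0}F(0)\xrightarrow{F(!_b)}F(b)$ of \cref{eq:explicitstructure1} (available for any lax monoidal pseudofunctor). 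The four binding $2$-cells are maps of decorated cospans whose cospan part is the evident binding cell in the cospan double category $\Csp(\A)$, and whose decoration-morphism part is the canonical isomorphism supplied by the pseudofunctoriality constraints of $F$ together with the fact that $I_b$ is built from $\phi_0$ and the unique arrow out of the initial object; equivalently, they are the images under $\Fr$ of the cartesian and cocartesian liftings of $f$ that were computed in the proof of \cref{thm:decorated_cospans_1}.

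It then remains to verify the companion and conjoint identities of \cref{def:companion}. I would dispatch these by a diagram chase that splits into two independent parts: on underlying cospans the identities are the familiar ones in $\Csp(\A)$, and on decorations they reduce to the unit-coherence axioms of the lax monoidal pseudofunctor $F$, since every composite of decoration morphisms that arises collapses to an identity once $I_b$ is unwound through $\phi_0$ and the pseudofunctor constraints. The main obstacle is purely bookkeeping: confirming that the decoration components of the binding cells are the precise canonical isomorphisms and that the binding equations hold strictly (recall that vertical composition in $F\lCsp$ is strict while horizontal composition is only pseudo). This bookkeeping is entirely avoidable, however, because the abstract argument of the first paragraph already proves the lemma.
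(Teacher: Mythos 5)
Your proposal is correct, and you actually give two independent arguments. The second (explicit) one is essentially the paper's proof: the companion of $f \maps a \to b$ is the cospan $a \xrightarrow{f} b \xleftarrow{1} b$ with the trivial decoration $I_b = F(!_b)\circ\phi_0$, the conjoint is its reversal, and the four binding cells have identity or pseudofunctoriality-isomorphism decoration morphisms. That matches the paper's construction exactly, down to the choice of $I_b$. Your first (abstract) argument is a genuine shortcut the paper does not take: since a horizontal $1$-cell of $\Fr(\Phi)$ is by definition an object of the strict pullback of $\Phi \maps \A/U \to \A$ along $+\maps \A\times\A\to\A$, the source--target functor $(S,T)\maps F\lCsp_1 \to \A\times\A$ is literally that pullback projection; pullbacks of (op)fibrations are (op)fibrations, so $(S,T)$ is a bifibration, and by Shulman's characterization of framed bicategories \cite[Theorem 4.1]{Shulman2008} this is exactly fibrancy of the double category. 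Indeed Shulman's Theorems 14.4/14.9, which the paper already cites to build $\Fr(\Phi)$, produce a \emph{framed bicategory}, so fibrancy is in fact part of the cited construction. The abstract route is cleaner and makes the lemma nearly vacuous; what the paper's explicit computation buys is a concrete description of companions and conjoints (and hence of restrictions/extensions) that is convenient to have in hand when comparing $F\bCsp$ to the structured-cospan bicategory in the subsequent results, but it is not logically necessary.
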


\begin{proof}
We show that any vertical 1-morphism $f \maps a \to b$ in $F\lCsp$ has a companion and a conjoint.  First, we can make this horizontal 1-cell $\hat{f}$:
\[
\begin{tikzpicture}[scale=1.5]
\node (A) at (0,0) {$a$};
\node (B) at (1,0) {$b$};
\node (C) at (2,0) {$b$};
\node (D) at (4,0) {$I_b \in F(b),$};
\path[->,font=\scriptsize,>=angle 90]
(A) edge node[above]{$f$} (B)
(C) edge node[above]{$1$} (B);
\end{tikzpicture}
\]
where $I_b$ is the trivial decoration given by
\[    \one \xrightarrow{\phi_0} F(0_\A) \xrightarrow{F(!_b)} F(b), \]
into a companion of $f$ using the following 2-morphisms:
\[
\begin{tikzpicture}[scale=1.5]
\node (A) at (0,0.5) {$a$};
\node (A') at (0,-0.5) {$b$};
\node (B) at (1,0.5) {$b$};
\node (C) at (2,0.5) {$b$};
\node (C') at (2,-0.5) {$b$};
\node (D) at (1,-0.5) {$b$};
\node (E) at (3,0.5) {$I_b \in F(b)$};
\node (F) at (3,-0.5) {$I_b \in F(b)$};
\node (G) at (5,0.5) {$a$};
\node (H) at (6,0.5) {$a$};
\node (I) at (7,0.5) {$a$};
\node (G') at (5,-0.5) {$a$};
\node (H') at (6,-0.5) {$b$};
\node (I') at (7,-0.5) {$b$};
\node (J) at (8,0.5) {$I_a \in F(a)$};
\node (K) at (8,-0.5) {$I_b \in F(b)$};
\node (L) at (1,-1) {$\tau_{1_b} = 1_{I_b}$};
\node (M) at (6,-1) {$\tau_f \maps F(f)(I_a) \to I_b$};
\path[->,font=\scriptsize,>=angle 90]
(A) edge node[above]{$f$} (B)
(C) edge node[above]{$1$} (B)
(A) edge node[left]{$f$} (A')
(C) edge node[right]{$1$} (C')
(A') edge node[below] {$1$} (D)
(C') edge node[below] {$1$} (D)
(B) edge node [left] {$1$} (D)
(G) edge node [above] {$1$} (H)
(G) edge node [left] {$1$} (G')
(H) edge node [left] {$f$} (H')
(G') edge node [below] {$f$} (H')
(I) edge node [above] {$1$} (H)
(I) edge node [right] {$f$} (I')
(I') edge node [below] {$1$} (H');
\end{tikzpicture}
\]
where the decoration morphism $\tau_f$ is the isomorphism given by pseudofunctoriality of $F$:
\[
\begin{tikzpicture}[scale=1.5]
\node (A) at (0,0) {$\one$};
\node (B) at (1,0) {$F(0)$};
\node (C) at (2.2,0.5) {$F(a)$};
\node (C') at (2.2,-0.5) {$F(b)$};
\node (D) at (1.8,0) {$\cong$};
\path[->,font=\scriptsize,>=angle 90]
(A) edge node[above]{$\phi_0$} (B)
(B) edge node[above]{$F(!_a)\;$} (C)
(B) edge node[below]{$F(!_b)\;\;$} (C')
(C) edge node[right]{$F(f)$} (C');
\end{tikzpicture}
\]
These 2-morphisms satisfy the equations \cref{eq:CompanionEq} required of a companion, involving vertical and horizontal composition of 2-morphisms in this double category:
\[
\begin{tikzpicture}[scale=1.5]
\node (N) at (0,1.5) {$a$};
\node (O) at (1,1.5) {$a$};
\node (P) at (2,1.5) {$a$};
\node (Q) at (-1,1.5) {$I_a \in F(a)$};
\node (A) at (0,0.5) {$a$};
\node (A') at (0,-0.5) {$b$};
\node (B) at (1,0.5) {$b$};
\node (C) at (2,0.5) {$b$};
\node (C') at (2,-0.5) {$b$};
\node (D) at (1,-0.5) {$b$};
\node (E) at (-1,0.5) {$I_b \in F(b)$};
\node (F) at (-1,-0.5) {$I_b \in F(b)$};
\node (G) at (4,1) {$a$};
\node (H) at (5,1) {$a$};
\node (I) at (6,1) {$a$};
\node (G') at (4,0) {$b$};
\node (H') at (5,0) {$b$};
\node (I') at (6,0) {$b$};
\node (J) at (7,1) {$I_a \in F(a)$};
\node (K) at (7,0) {$I_b \in F(b)$};
\node (Q) at (1,-1) {$\tau_f \maps F(f)(I_a) \to I_b$};
\node (L) at (1,-1.5) {$\tau_{1_b} = 1_{I_b}$};
\node (M) at (5,-0.5) {$\tau_f \maps F(f)(I_a) \to I_b$};
\node (R) at (3,0.5) {$=$};
\path[->,font=\scriptsize,>=angle 90]
(N) edge node[above]{$1$} (O)
(P) edge node[above]{$1$} (O)
(N) edge node[left]{$1$} (A)
(O) edge node[left]{$f$} (B)
(P) edge node[right]{$f$} (C)
(A) edge node[above]{$f$} (B)
(C) edge node[above]{$1$} (B)
(A) edge node[left]{$f$} (A')
(C) edge node[right]{$1$} (C')
(A') edge node[below] {$1$} (D)
(C') edge node[below] {$1$} (D)
(B) edge node [left] {$1$} (D)
(G) edge node [above] {$1$} (H)
(G) edge node [left] {$f$} (G')
(H) edge node [left] {$f$} (H')
(G') edge node [below] {$1$} (H')
(I) edge node [above] {$1$} (H)
(I) edge node [right] {$f$} (I')
(I') edge node [below] {$1$} (H');
\end{tikzpicture}
\]

\[
\begin{tikzpicture}[scale=1.5]
\node (G) at (-0.75,0.5) {$a$};
\node (H) at (-0.75,-0.5)  {$b$};
\node (I) at (-1.5,0.5) {$a$};
\node (J) at (-1.5,-0.5) {$a$};
\node (A) at (0,0.5) {$a$};
\node (A') at (0,-0.5) {$b$};
\node (B) at (0.75,0.5) {$b$};
\node (C) at (1.5,0.5) {$b$};
\node (C') at (1.5,-0.5) {$b$};
\node (D) at (0.75,-0.5) {$b$};
\node (E) at (0.75,1) {$\scriptstyle{I_b \in F(b)}$};
\node (F) at (2,-0.5) {$\scriptstyle{I_b \in F(b)}$};
\node (L) at (2,0) {$\scriptstyle{\tau_{b} = 1_{I_b}}$};
\node (E') at (-0.75,1) {$\scriptstyle{I_a \in F(a)}$};
\node (F') at (-2.25,-0.5) {$\scriptstyle{I_b \in F(b)}$};
\node (L') at (-2.5,0) {$\scriptstyle{\tau_{f} \maps F(f)(I_a) \to I_b}$};
\node (M) at (2.5,-0.5) {$\scriptstyle{=}$};
\node (N) at (4.5,0) {$a$};
\node (O) at (4.5,-1) {$b$};
\node (P) at (5.25,0) {$b$};
\node (R) at (6,0) {$b$};
\node (S) at (6,-1) {$b$};
\node (T) at (5.25,0.5) {$\scriptstyle{I_b \in F(b)}$};
\node (T') at (3.75,0.5) {$\scriptstyle{I_a \in F(a)}$};
\node (U) at (4.5,-1.5) {$\scriptstyle{I_b \in F(b)}$};
\node (V) at (4.5,-1.75) {$\scriptstyle{\tau_{\rho_{\hat{f}}} \maps (I_b \odot I_a) \to I_b}$};
\node (K) at (-1.5,-1.5) {$a$};
\node (L') at (0,-1.5) {$b$};
\node (M') at (1.5,-1.5) {$b$};
\node (N') at (0,-2) {$\scriptstyle{I_b \in F(b)}$};
\node (O') at (0,-2.25) {$\scriptstyle{\tau_{\lambda_{\hat{f}}} \maps (I_b \odot I_b) \to I_b}$};
\node (W) at (3.75,0) {$a$};
\node (Y) at (3,0) {$a$};
\node (Z) at (3,-1) {$a$};
\path[->,font=\scriptsize,>=angle 90]
(Y) edge node [left] {$1$} (Z)
(Y) edge node [above] {$1$} (W)
(Z) edge node [below] {$f$} (O)
(N) edge node [above] {$1$} (W)
(J) edge node [left] {$1$} (K)
(A') edge node [left] {$1$} (L')
(C') edge node [right] {$1$} (M')
(K) edge node[below] {$f$} (L')
(M') edge node [below] {$1$} (L')
(N) edge node[left]{$f$} (O)
(R) edge node[right]{$1$} (S)
(N) edge node[above]{$f$} (P)
(R) edge node[above]{$1$} (P)
(S) edge node[below]{$1$} (O)
(A) edge node[above]{$f$} (B)
(C) edge node[above]{$1$} (B)
(A) edge node[left]{$f$} (A')
(C) edge node[right]{$1$} (C')
(A') edge node[above] {$1$} (D)
(C') edge node[above] {$1$} (D)
(B) edge node [left] {$1$} (D)
(A) edge node[above]{$1$} (G)
(G) edge node[left]{$f$} (H)
(A') edge node[above]{$1$} (H)
(J) edge node[above] {$f$} (H)
(I) edge node[left] {$1$} (J)
(I) edge node [above] {$1$} (G);
\end{tikzpicture}
\]
Note that the right hand side of the first equation is $U_f$, while the second equation involves the left and right unitors for $\odot$: these are maps from a horizontal composite of two decorated cospans to a single decorated cospan.  The conjoint of $f$ is given by this horizontal 1-cell $\check{f}$, which is just the opposite of the companion above:
\[
\begin{tikzpicture}[scale=1.5]
\node (A) at (0,0) {$b$};
\node (B) at (1,0) {$b$};
\node (C) at (2,0) {$a$};
\node (D) at (4,0) {$I_b \in F(b)$.};
\path[->,font=\scriptsize,>=angle 90]
(A) edge node[above]{$1$} (B)
(C) edge node[above]{$f$} (B);
\end{tikzpicture}
\]
Just as $\hat{f}$ obeys the equations required of a companion, $\check{f}$ obeys the equations required of a conjoint with similar structure 2-morphisms to those of a companion above.
\end{proof}

\begin{thm}
\label{thm:bicat}
Let $\A$ be a category with finite colimits and $F \maps (\A, +) \to (\Cat,\times)$ a symmetric lax monoidal pseudofunctor. Then there exists a symmetric monoidal bicategory $F \mathbf{Csp}$ in which:
\begin{enumerate}
\item objects are those of $\A$,
\item morphisms are $F$-decorated cospans:
\[
\begin{tikzpicture}[scale=1.5]
\node (A) at (0,0) {$a$};
\node (B) at (1,0) {$m$};
\node (C) at (2,0) {$b$};
\node (D) at (4,0) {$s \in F(m)$,};
\path[->,font=\scriptsize,>=angle 90]
(A) edge node[above]{$i$} (B)
(C) edge node[above]{$o$} (B);
\end{tikzpicture}
\]
\item a 2-morphism is a map of cospans in $\A$
\[
\begin{tikzpicture}[scale=1.5]
\node (A) at (0,0) {$a$};
\node (B) at (1,0.5) {$m$};
\node (C) at (2,0) {$b$};
\node (E) at (1,-0.5) {$m'$};
\node (D) at (3,0.5) {$s \in F(m)$};
\node (F) at (3,-0.5) {$s' \in F(m')$};
\path[->,font=\scriptsize,>=angle 90]
(A) edge node[above]{$i$} (B)
(C) edge node[above]{$o$} (B)
(A) edge node[below]{$i'$} (E)
(B) edge node[left]{$h$} (E)
(C) edge node[below]{$o'$} (E);
\end{tikzpicture}
\]
together with a morphism $\tau \maps F(h)(s) \to s'$ in $F(m')$.
\end{enumerate}
\end{thm}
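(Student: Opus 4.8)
The plan is to obtain $F\mathbf{Csp}$ as the horizontal bicategory of the symmetric monoidal double category $F\lCsp$ built in \cref{thm:decorated_cospans_2}. By that theorem $F\lCsp$ is symmetric monoidal, and by the lemma immediately preceding this statement it is fibrant; therefore Shulman's result (\cref{Shulhorizontalbicat}) applies and transports the symmetric monoidal structure of $F\lCsp$ to its horizontal bicategory $F\bCsp$. So essentially no new argument is needed: the theorem is the conjunction of \cref{thm:decorated_cospans_2}, the fibrancy lemma, and \cref{Shulhorizontalbicat}, and what remains is only to check that the resulting bicategory has the data listed in the statement.

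First I would unwind the definition of the horizontal bicategory. Its objects are the objects of $F\lCsp$, namely the objects of $\A$; its morphisms are the horizontal 1-cells of $F\lCsp$, namely $F$-decorated cospans $a \xrightarrow{i} m \xleftarrow{o} b$ equipped with $s \in F(m)$; and composition of morphisms is the pushout composition of decorated cospans described after \cref{thm:decorated_cospans_1}. For the 2-morphisms, I would observe that a globular 2-morphism of $F\lCsp$ is a 2-morphism whose source and target vertical 1-morphisms are identities, so in the defining square of a map of decorated cospans one has $f = 1_a$ and $g = 1_b$; the square then collapses to a map of cospans $a \to m \leftarrow b$, $a \to m' \leftarrow b$ with $h \maps m \to m'$, and the accompanying decoration morphism is exactly a morphism $\tau \maps F(h)(s) \to s'$ in $F(m')$ --- precisely item (3).

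Then I would note that the monoidal structure inherited through \cref{Shulhorizontalbicat} is the expected one, since it is built by restricting the structure of $F\lCsp$ along the inclusion of the horizontal bicategory: the tensor product of objects is the coproduct in $\A$, the tensor product of decorated cospans is the one displayed in \cref{thm:decorated_cospans_2}, and the braiding comes from the symmetry of $F\lCsp$. All pentagon, hexagon and interchange coherence data, and the verification of their axioms, are supplied by Shulman's theorem and need not be redone.

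The only point requiring care --- more bookkeeping than genuine obstacle --- is the identification of globular 2-morphisms with the maps of cospans in item (3), i.e.\ making sure that forcing the vertical legs to be identities really does leave exactly the data $(h,\tau)$ with no residual constraint. Everything else, including the fact that horizontal composition in $F\bCsp$ is only weakly associative and unital, is automatic because $F\lCsp$ is already a pseudo double category, so nothing is lost in passing to the horizontal bicategory.
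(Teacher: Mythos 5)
Your proposal is correct and matches the paper's proof exactly: the paper also obtains $F\bCsp$ by applying \cref{Shulhorizontalbicat} to the fibrant symmetric monoidal double category $F\lCsp$, with fibrancy supplied by the lemma immediately preceding the statement. Your extra unwinding of the globular 2-morphisms is sound and merely makes explicit what the paper leaves to the reader.
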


\begin{proof}
This follows by applying \cref{Shulhorizontalbicat} to the fibrant symmetric monoidal double category $F\lCsp$.
\end{proof}

This symmetric monoidal bicategory $F\bCsp$ generalizes one constructed by the second author \cite{Courser}.    We can decategorify $F\bCsp$ to obtain a symmetric monoidal category of
decorated cospans generalizing those considered by Fong \cite{Fong}:

\begin{cor}
Let $\A$ be a category with finite colimits and $F \maps (\A, +) \to (\Cat, \times)$ a symmetric lax monoidal pseudofunctor.  Then there exists a symmetric monoidal category $F\Csp$ in which:
\begin{enumerate}
\item objects are those of $\A$
\item morphisms are isomorphism classes of $F$-decorated cospans of $\A$, where two
$F$-decorated cospans
\[
\begin{tikzpicture}[scale=1.5]
\node (A) at (0,0) {$a$};
\node (B) at (1,0) {$m$};
\node (C) at (2,0) {$b$};
\node (D) at (4,0) {$s \in F(m)$};
\path[->,font=\scriptsize,>=angle 90]
(A) edge node[above]{$i$} (B)
(C) edge node[above]{$o$} (B);
\end{tikzpicture}
\]
\[
\begin{tikzpicture}[scale=1.5]
\node (A) at (0,0) {$a$};
\node (B) at (1,0) {$m'$};
\node (C) at (2,0) {$b$};
\node (D) at (4,0) {$s' \in F(m')$};
\path[->,font=\scriptsize,>=angle 90]
(A) edge node[below]{$i'$} (B)
(C) edge node[below]{$o'$} (B);
\end{tikzpicture}
\]
are isomorphic if and only if there exists an isomorphism $f \maps m \to m'$ in $\A$ such that following diagram commutes:
\[
\begin{tikzpicture}[scale=1.5]
\node (A) at (0,0) {$a$};
\node (B') at (1,0.5) {$m$};
\node (B) at (1,-0.5) {$m'$};
\node (C) at (2,0) {$b$};
\path[->,font=\scriptsize,>=angle 90]
(A) edge node[below]{$i'$} (B)
(C) edge node[below]{$o'$} (B)
(A) edge node[above]{$i$} (B')
(C) edge node[above]{$o$} (B')
(B') edge node[left]{$f$} (B);
\end{tikzpicture}
\]
and there exists an isomorphism $\tau \maps F(f)(s) \to s'$ in $F(m')$.
\end{enumerate}
\end{cor}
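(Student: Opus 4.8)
The plan is to realize $F\Csp$ as the decategorification of the symmetric monoidal bicategory $F\bCsp$ produced by \cref{thm:bicat}. Recall that the decategorification $\D$ of any bicategory $\bD$ has the objects of $\bD$ as its objects and the isomorphism classes of $1$-cells of $\bD$ as its morphisms; this is a well-defined category because, on each hom-category, horizontal composition is a functor and hence preserves invertible $2$-cells, so composition descends to isomorphism classes and inherits associativity and unitality from the associators and unitors of $\bD$ (and isomorphism of objects in a category is always an equivalence relation, so the relation on $1$-cells is a genuine congruence). Moreover, when $\bD$ is symmetric monoidal, its decategorification is a symmetric monoidal category, the tensor product and symmetry being those of $\bD$ read off modulo $2$-isomorphism. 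Applying this to $F\bCsp$ immediately yields a symmetric monoidal category whose objects are those of $\A$, whose tensor product on objects is $+$, and whose morphisms from $a$ to $b$ are isomorphism classes of $F$-decorated cospans from $a$ to $b$. It remains only to check that the isomorphism relation on $F$-decorated cospans inherited from $F\bCsp$ is exactly the one described in the statement.

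The substantive step is therefore to identify the invertible $2$-cells of $F\bCsp$. By \cref{thm:bicat}, a $2$-cell from $(a \xrightarrow{i} m \xleftarrow{o} b, s)$ to $(a \xrightarrow{i'} m' \xleftarrow{o'} b, s')$ (with the same feet $a,b$) is a morphism $h \maps m \to m'$ in $\A$ with $h i = i'$ and $h o = o'$, together with a decoration morphism $\tau \maps F(h)(s) \to s'$ in $F(m')$. I would first show that such a pair $(h,\tau)$ is invertible in the hom-category $F\bCsp(a,b)$ if and only if $h$ is an isomorphism in $\A$ and $\tau$ is an isomorphism in $F(m')$. For the nontrivial direction, given an inverse $2$-cell $(h',\tau')$, the two equations $(h',\tau')\circ(h,\tau)=\id$ and $(h,\tau)\circ(h',\tau')=\id$ force $h' h = 1_m$ and $h h' = 1_{m'}$ at the level of underlying maps of cospans, so $h$ is an isomorphism with $h' = h^{-1}$; the decoration components of the same two equations, expanded through the pseudofunctoriality isomorphisms $F(h')\circ F(h)\cong F(h'h)\cong F(1)\cong\id$, then exhibit $\tau$ and a transported version of $\tau'$ as mutually inverse, so $\tau$ is an isomorphism. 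Conversely, given an isomorphism $f \maps m \to m'$ with $f i = i'$, $f o = o'$, and an isomorphism $\tau \maps F(f)(s) \to s'$, the pair $(f,\tau)$ is a $2$-cell whose inverse is $(f^{-1}, \tilde\tau)$, where $\tilde\tau \maps F(f^{-1})(s') \to s$ is the composite of $F(f^{-1})(\tau^{-1}) \maps F(f^{-1})(s') \to F(f^{-1})(F(f)(s))$ with the pseudofunctoriality isomorphism $F(f^{-1})(F(f)(s)) \cong F(f^{-1}f)(s) = F(1_m)(s) \cong s$; one checks both composites are identity $2$-cells using the coherence of these isomorphisms. This shows that the two $F$-decorated cospans lie in the same isomorphism class exactly under the condition stated in the corollary.

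With the isomorphism relation thus identified, $F\Csp$ is precisely the decategorification of $F\bCsp$, hence a symmetric monoidal category with the asserted objects and morphisms. I expect the identification of invertible $2$-cells to be the main (and only mildly technical) obstacle, since it is the point at which one must make explicit the coherence data of the pseudofunctor $F$ that the bicategory $F\bCsp$ otherwise keeps implicit, in particular in constructing and verifying the inverse decoration morphism $\tilde\tau$; everything else — well-definedness of composition on isomorphism classes, associativity, units, and transport of the tensor product and symmetry through decategorification — is the general decategorification machinery recalled above, applied to the symmetric monoidal bicategory of \cref{thm:bicat}.
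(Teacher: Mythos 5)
Your proposal is correct and takes the same route the paper does: the paper presents this corollary as the decategorification of the symmetric monoidal bicategory $F\bCsp$ from \cref{thm:bicat}, with no further argument given. Your identification of the invertible globular $2$-morphisms of $F\bCsp$ (showing $(h,\tau)$ is invertible precisely when $h$ is an isomorphism in $\A$ and $\tau$ is an isomorphism in $F(m')$, using pseudofunctoriality to build the inverse decoration morphism) is a correct and useful elaboration of a detail the paper leaves implicit in the phrase ``isomorphism classes of $F$-decorated cospans.''
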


In \cref{thm:equiv} we gave conditions under which the symmetric monoidal double category of \emph{decorated} cospans $F\lCsp$ is isomorphic to the  symmetric monoidal double category of \emph{structured} cospans ${}_L \lCsp(\inta F)$.   We now show that under the same conditions we get an isomorphism of symmetric monoidal bicategories, and of categories.

\begin{thm} \label{thm:bicat_equiv}
Suppose $\A$ has finite colimits and $F \maps(\A,+) \to (\Cat,\times)$ is a symmetric lax monoidal pseudofunctor that factors through $\Rex$ as an ordinary pseudofunctor.    Define the symmetric monoidal bicategory $_L\bCsp(\inta F)$ as in \cref{thm:equiv}.   Then there is an isomorphism of symmetric monoidal bicategories
\[      F\bCsp \cong {}_L \bCsp(\inta F)   \]
and of symmetric monoidal categories
\[      F\Csp \cong {}_L \Csp(\inta F)  . \]
\end{thm}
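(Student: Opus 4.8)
The plan is to transport the isomorphism of symmetric monoidal double categories from \cref{thm:equiv} along the two operations ``pass to the horizontal bicategory'' and ``decategorify''. Recall from the proof of \cref{thm:equiv} that, under the present hypotheses, the isomorphism of comma categories $\A/U \cong L/\X$ induces, via Shulman's $2$-functor $\Fr$, a symmetric monoidal double functor $\Fr(F) \maps F\lCsp \to {}_L\lCsp(\inta F)$ which is in fact an isomorphism: it is a bijection on objects, on vertical $1$-morphisms, on horizontal $1$-cells and on $2$-morphisms, it is the identity on the shared category of objects $\A$, and it admits a strict inverse double functor $\Fr(F^{-1})$.

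First I would note that both double categories here are fibrant symmetric monoidal double categories: for $F\lCsp$ this is the Lemma preceding \cref{thm:bicat}, while ${}_L\lCsp(\inta F) = \Fr(\Psi)$ is fibrant because Shulman's $\Fr$ lands in the $2$-category $\mathbf{FDbl}$ of fibrant double categories (one may alternatively cite \cite{BC}). Hence \cref{Shulhorizontalbicat} equips the horizontal bicategories $F\bCsp$ and ${}_L\bCsp(\inta F)$ with symmetric monoidal structures, namely those appealed to in \cref{thm:bicat} and in the statement above. Passing to the horizontal bicategory is functorial: a symmetric monoidal double functor between fibrant symmetric monoidal double categories induces a symmetric monoidal pseudofunctor of the associated symmetric monoidal bicategories, compatibly with composition and identities. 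Applying this to $\Fr(F)$ and $\Fr(F^{-1})$ produces mutually inverse symmetric monoidal pseudofunctors between $F\bCsp$ and ${}_L\bCsp(\inta F)$; since $\Fr(F)$ is the identity on objects and vertical $1$-morphisms and a bijection on all cells, the induced pseudofunctor is the identity on objects and a bijection on $1$-cells and on globular $2$-morphisms, hence an isomorphism of symmetric monoidal bicategories. This gives $F\bCsp \cong {}_L\bCsp(\inta F)$. The categorical statement follows by the same token from functoriality of decategorification, which sends a symmetric monoidal bicategory to the symmetric monoidal category with the same objects and isomorphism classes of $1$-cells as morphisms, and a symmetric monoidal pseudofunctor to a symmetric monoidal functor; an isomorphism of symmetric monoidal bicategories, being bijective on objects and $1$-cells and an equivalence on each hom-category, decategorifies to an isomorphism of symmetric monoidal categories, so $F\Csp \cong {}_L\Csp(\inta F)$.

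The one step meriting care — and the main, if modest, obstacle — is the \emph{monoidal} functoriality of the horizontal-bicategory construction: one must know that Shulman's recipe producing a symmetric monoidal bicategory from a symmetric monoidal fibrant double category is natural in the double category, so that the strict isomorphism $\Fr(F)$ carries the symmetric monoidal structure of $F\bCsp$ onto that of ${}_L\bCsp(\inta F)$ on the nose rather than merely up to coherent equivalence. This is unproblematic here since $\Fr(F)$ strictly preserves every ingredient from which that structure is built — the tensor of $1$-cells and $2$-morphisms, the associativity and unit constraints, the braiding, and the coherence cells assembled from the companions and conjoints of vertical $1$-morphisms — so one may simply transport the structure along $\Fr(F)$ and reapply \cref{Shulhorizontalbicat} to conclude that the transported structure is again symmetric monoidal. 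The category-level claim is then immediate.
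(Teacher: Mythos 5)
Your proposal follows essentially the same route as the paper: both invoke the Hansen–Shulman functoriality of the passage from fibrant symmetric monoidal double categories to symmetric monoidal bicategories (\cref{Shulhorizontalbicat}) to transport the isomorphism of \cref{thm:equiv} to the horizontal bicategories, and then use that decategorification is functorial and preserves isomorphisms to conclude at the category level. Your closing paragraph on the ``on the nose'' concern is extra care the paper elides by simply citing the Hansen–Shulman result, but it does not change the substance of the argument.
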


\begin{proof}
Hansen and Shulman \cite{HS} showed that the passage from symmetric monoidal double categories to symmetric monoidal bicategories is functorial in a suitable sense.  This implies that an isomorphism of symmetric monoidal double categories $\lD \cong \lD'$ gives an isomorphism of symmetric monoidal bicategories $\bD \cong \bD'$.    Since the process of decategorifying a bicategory merely discards 2-morphisms and takes isomorphism classes of 1-morphisms, the isomorphism of symmetric monoidal bicategories $\bD \cong \bD'$ in turn induces an isomorphism of symmetric monoidal categories $\D \cong \D'$.   Thus, the theorem follows from \cref{thm:equiv}. \end{proof}

\section{Applications}\label{Applications}

Thinking about systems and processes categorically dates back to early works by Lawvere \cite{Lawvere}, Bunge--Fiore \cite{BungeFiore}, Joyal--Nielsen--Winskel \cite{JNW}, Katis--Sabadini--Walters \cite{KSW} and others.   Spivak and others have used wiring diagrams and sheaves to capture compositional features of dynamical systems \cite{BFV,SSV,VSL}.  Another approach uses signal flow diagrams and other string diagrams \cite{BE,BSZ,FRS} to understand systems behaviorally, following ideas of Willems \cite{Willems}.

Decorated cospans were introduced by Fong \cite{Fong,FongThesis} to describe open systems as cospans equipped with extra data.  They were then applied to open electrical circuits \cite{BF}, Markov processes \cite{BFP}, and chemical reaction networks \cite{BP}.  Unfortunately, some of these applications were marred by technical flaws, which were later fixed using structured cospans \cite{BC}. Here we explain how they can also be fixed using our new decorated cospans.  We compare the two approaches in applications to graphs, electrical circuits, Petri nets, reaction networks and dynamical systems.

In some cases, \cref{thm:equiv} shows that the structured and decorated cospan approaches are equivalent: \cref{subsec:graphs,subsec:circuits,subsec:petri} illustrate this.  However, in some cases decorated cospans appear to be necessary, and in \cref{subsec:petrirates} we explain why \cref{thm:equiv} does not apply to open dynamical systems.

\subsection{Graphs}
\label{subsec:graphs}

One of the simplest kinds of network is a graph.  For us a \define{graph} will be a pair of functions $s,t\maps E \to N$ where $E$ and $N$ are finite
sets.   We call elements of $E$ \define{edges} and elements of $N$ \define{nodes}.  There is a category $\Graph$ where the objects are graphs and a
morphism from the graph $s,t\maps E \to N$ to the graph $s',t' \maps E' \to N'$ is a pair of functions $f \maps N \to N', g \maps E \to E'$ such that
these diagrams commute:
\[
\begin{tikzpicture}[scale=1.5]
\node (A) at (0,0) {$E$};
\node (A') at (0,-1) {$E'$};
\node (B) at (1,0) {$N$};
\node (B') at (1,-1) {$N'$};
\path[->,font=\scriptsize,>=angle 90]
(A) edge node[above]{$s$} (B)
(A') edge node[below]{$s'$} (B')
(A) edge node[left]{$g$} (A')
(B) edge node[right]{$f$} (B');

\node (C) at (2,0) {$E$};
\node (C') at (2,-1) {$E'$};
\node (D) at (3,0) {$N$};
\node (D') at (3,-1) {$N'$.};
\path[->,font=\scriptsize,>=angle 90]
(C) edge node[above]{$t$} (D)
(C') edge node[below]{$t'$} (D')
(C) edge node[left]{$g$} (C')
(D) edge node[right]{$f$} (D');
\end{tikzpicture}
\]

We can easily build a double category with `open graphs' as horizontal 1-cells using the machinery of structured cospans \cite[Section 5]{BC}.  Let $L \maps \Fin\Set \to \Graph$ be the functor that assigns to a finite set $N$ the \define{discrete graph} on $N$: the graph with no edges and $N$ as its set of vertices. Both $\Fin\Set$ and $\Graph$ have finite colimits, and the functor $L \maps \Fin\Set \to \Graph$ is left adjoint to the forgetful functor $R \maps \Graph \to \Fin\Set$ that assigns to a graph $G$ its underlying set of vertices $R(G)$. Thus, using structured cospans and appealing to \cref{thm:structured_cospans}, we get a symmetric monoidal double category $_L \lCsp(\Graph)$ in which:
\begin{itemize}
\item objects are finite sets,
\item a vertical 1-morphism from $X$ to $Y$ is a function $f \maps X \to Y$,
\item a horizontal 1-cell from $X$ to $Y$ is an \define{open graph} from $X$ to $Y$, meaning a cospan in $\Graph$ of this form:
\[
\begin{tikzpicture}[scale=1.5]
\node (A) at (0,0) {$L(X)$};
\node (B) at (1,0) {$G$};
\node (C) at (2,0) {$L(Y)$,};
\path[->,font=\scriptsize,>=angle 90]
(A) edge node[above]{$i$} (B)
(C) edge node[above]{$o$} (B);
\end{tikzpicture}
\]
\item a 2-morphism is a commuting diagram in $\Graph$ of this form:
\[
\begin{tikzpicture}[scale=1.5]
\node (A) at (0,0) {$L(X)$};
\node (B) at (1,0) {$G$};
\node (C) at (2,0) {$L(Y)$};
\node (A') at (0,-1) {$L(X')$};
\node (B') at (1,-1) {$G'$};
\node (C') at (2,-1) {$L(Y')$.};
\path[->,font=\scriptsize,>=angle 90]
(A) edge node[above]{$i$} (B)
(C) edge node[above]{$o$} (B)
(A') edge node[below]{$i'$} (B')
(C') edge node[below]{$o'$} (B')
(A) edge node [left]{$L(f)$} (A')
(B) edge node [left]{$\alpha$} (B')
(C) edge node [right]{$L(g)$} (C');
\end{tikzpicture}
\]
\end{itemize}
Here is an example of an open graph:
\[
\scalebox{0.8}{
\begin{tikzpicture}
	\begin{pgfonlayer}{nodelayer}
		\node [contact] (n1) at (-2,0) {$\bullet$};
		\node [style = none] at (-2.1,0.3) {$n_1$};
		\node [contact] (n2) at (0,1) {$\bullet$};
		\node [style = none] at (0,1.3) {$n_2$};
		\node [contact] (n3) at (0,-1) {$\bullet$};
		\node [style = none] at (0,-1.3) {$n_3$};
		\node [contact] (n4) at (2,0) {$\bullet$};
		\node [style = none] at (2.1,0.3) {$n_4$};

		\node [style = none] at (-1,1) {$e_1$};
		\node [style = none] at (-1,-1) {$e_2$};
		\node [style = none] at (1,1) {$e_3$};
		\node [style = none] at (1,-1) {$e_4$};
	    \node [style = none] at (0.3,0) {$e_5$};

		\node [style=none] (1) at (-3,0) {1};
		\node [style=none] (4) at (3,0) {2};

		\node [style=none] (ATL) at (-3.4,1.4) {};
		\node [style=none] (ATR) at (-2.6,1.4) {};
		\node [style=none] (ABR) at (-2.6,-1.4) {};
		\node [style=none] (ABL) at (-3.4,-1.4) {};

		\node [style=none] (X) at (-3,1.8) {$X$};
		\node [style=inputdot] (inI) at (-2.8,0) {};

		\node [style=none] (Z) at (3,1.8) {$Y$};
	 \node [style=inputdot] (outI') at (2.8,0) {};

		\node [style=none] (MTL) at (2.6,1.4) {};
		\node [style=none] (MTR) at (3.4,1.4) {};
		\node [style=none] (MBR) at (3.4,-1.4) {};
		\node [style=none] (MBL) at (2.6,-1.4) {};

	\end{pgfonlayer}
	\begin{pgfonlayer}{edgelayer}
		\draw [style=inarrow, bend left=20, looseness=1.00] (n1) to (n2);
		\draw [style=inarrow, bend right=20, looseness=1.00] (n1) to (n3);
		\draw [style=inarrow, bend left=20, looseness=1.00] (n2) to (n4);
		\draw [style=inarrow, bend right=20, looseness=1.00] (n3) to (n4);
		\draw [style=inarrow] (n2) to (n3);
		\draw [style=simple] (ATL.center) to (ATR.center);
		\draw [style=simple] (ATR.center) to (ABR.center);
		\draw [style=simple] (ABR.center) to (ABL.center);
		\draw [style=simple] (ABL.center) to (ATL.center);
		\draw [style=simple] (MTL.center) to (MTR.center);
		\draw [style=simple] (MTR.center) to (MBR.center);
		\draw [style=simple] (MBR.center) to (MBL.center);
		\draw [style=simple] (MBL.center) to (MTL.center);
		\draw [style=inputarrow] (inI) to (n1);
		\draw [style=inputarrow] (outI') to (n4);
	\end{pgfonlayer}
\end{tikzpicture}
}
\]

We can also build a double category with open graphs as horizontal 1-cells using decorated cospans.    For any finite set $N$, there is a category $F(N)$ where:
\begin{itemize}
\item an object is a \define{graph structure} on $N$: that is, a graph $s,t \maps E \to N$,
\item a morphism from $s,t \maps E \to N$ to $s',t' \maps E' \to N$ is a morphism of graphs that is the identity on $N$: that is, a function $g \maps E \to E'$ such that these diagrams commute:
\[
\begin{tikzpicture}[scale=1.5]
\node (A) at (0,0) {$E$};
\node (A') at (0,-1) {$E'$};
\node (B) at (1,-0.5) {$N$};
\path[->,font=\scriptsize,>=angle 90]
(A) edge node[above]{$s$} (B)
(A') edge node[below]{$s'$} (B)
(A) edge node[left]{$g$} (A');
\node (C) at (2,0) {$E$};
\node (C') at (2,-1) {$E'$};
\node (D) at (3,-0.5) {$N$.};
\path[->,font=\scriptsize,>=angle 90]
(C) edge node[above]{$t$} (D)
(C') edge node[below]{$t'$} (D)
(C) edge node[left]{$g$} (C');
\end{tikzpicture}
\]
\end{itemize}

In general, decorated cospans involve a pseudofunctor to $\Cat$, but in this example there is actually an honest functor $F \maps \Set \to \Cat$ that assigns to a set $N$ the above category $F(N)$.   Given a function $f \maps M \to N$, we define $F(f) \maps F(M) \to F(N)$ as the functor that maps any graph structure $s,t \maps E \to M$ to the graph structure $f s, f t \maps E \to N$.

We can make $F$ into a symmetric lax monoidal pseudofunctor $F \maps (\Fin\Set, +) \to (\Cat,\times)$ by equipping it with suitable functors
\[   \phi_{N,N'} \maps F(N) \times F(N') \to F(N+N'), \qquad \phi_0 \maps  \one \to F(\emptyset) . \]

The functor $\phi_0$ is uniquely determined since $F(\emptyset)$ is the terminal category.   More interesting is $\phi_{N,N'}$.   This functor maps a pair of graph structures $s, t \maps E \to N$ and $s',t' \maps E' \to N'$ to the graph structure $s+s', t+t' \maps E+E' \to N+N'$.  In other words, it sends a pair of graph structures to their `disjoint union'.   Surprisingly, though $F$ is a functor, this choice of $\phi_{N,N'}$ does not make $F$ into a symmetric lax monoidal functor, but only a symmetric lax monoidal pseudofunctor, since it obeys the required laws only up to natural isomorphism, as in \cref{eq:omega}.   See \cite[Section 5]{BC} for a proof that these laws fail to hold on the nose.   This fact is what necessitated a generalization of Fong's original approach to decorated cospans.

It is well known, and easy to check, that the Grothendieck category $\inta F$ is isomorphic to the category $\Graph$.   The other side of this observation is that the opfibration $U \maps \inta F \to \Fin\Set$ is isomorphic to the forgetful functor $R \maps \Graph \to \Fin\Set$.    In fact one can check that $U \maps \inta F \to \Fin\Set$ and $R \maps \Graph \to \Fin\Set$ are isomorphic as symmetric monoidal opfibrations, where all the categories involved are given cocartesian monoidal structures.

Starting from the symmetric lax monoidal pseudofunctor $F \maps (\Fin\Set, +) \to (\Cat,\times)$, \cref{thm:decorated_cospans_2} gives us a symmetric monoidal double category $F\lCsp$ in which:
\begin{itemize}
\item objects are finite sets,
\item a vertical 1-morphism from $X$ to $X'$ is a function $f \maps X \to X'$,
\item a horizontal 1-cell from $X$ to $Y$ is a pair
\[
\begin{tikzpicture}[scale=1.5]
\node (A) at (0,0) {$X$};
\node (B) at (1,0) {$N$};
\node (C) at (2,0) {$Y$};
\node (D) at (3.25,0) {$G \in F(N)$};
\path[->,font=\scriptsize,>=angle 90]
(A) edge node[above]{$i$} (B)
(C) edge node[above]{$o$} (B);
\end{tikzpicture}
\]
which can also be thought of as an open graph from $X$ to $Y$,
\item a 2-morphism
\[
\begin{tikzpicture}[scale=1.5]
\node (A) at (0,0) {$X$};
\node (B) at (1,0) {$N$};
\node (C) at (2,0) {$Y$};
\node (A') at (0,-1) {$X'$};
\node (C') at (2,-1) {$Y'$};
\node (D) at (1,-1) {$N'$};
\node (E) at (3,0) {$G \in F(N)$};
\node (F) at (3,-1) {$G' \in F(N')$};
\path[->,font=\scriptsize,>=angle 90]
(A) edge node[above]{$i$} (B)
(C) edge node[above]{$o$} (B)
(A) edge node[left]{$f$} (A')
(C) edge node[right]{$g$} (C')
(C') edge node [below] {$o'$} (D)
(A') edge node [below] {$i'$} (D)
(B) edge node [left] {$h$} (D);
\end{tikzpicture}
\]
is a commuting diagram in $\Fin\Set$ together with a morphism $\tau \maps F(h)(G) \to G'$ in $F(N')$.
\end{itemize}

We thus have two symmetric monoidal double categories: ${}_L \lCsp(\Graph)$ obtained from structured cospans and $F\lCsp$ obtained from decorated cospans. Each of these double categories has $\Fin\Set$ as its category of objects, open graphs as horizontal 1-cells, and maps of open graphs as 2-morphisms.   This suggests that ${}_L \lCsp(\Graph)$  and $F\lCsp$ are isomorphic as symmetric monoidal double categories---and indeed this follows from \cref{thm:equiv}.

\subsection{Circuits}
\label{subsec:circuits}

Structured and decorated cospans are a powerful tool for studying categories where the morphisms are electrical circuits---see \cite[Section 6.1]{BC} and \cite{BCR,BF}.  The key idea is to use open graphs with labeled edges to describe circuits, where the labels can stand for resistors with any chosen resistance, capacitors with any chosen capacitance, or other circuit elements.   The whole theory of open graphs discussed in the previous section can be recapitulated for labeled graphs.  Since the abstract formalism works the same way, we can be brief.   Concrete applications of this formalism are discussed in the above references, and in \cite{BFP} a class of Markov processes were
also handled using this formalism, by reducing them to circuits of resistors.

Fix a set $\La$ to serve as edge labels.  Define an $\La$-\define{graph} to be a graph $s,t\maps E\to N$ equipped with a function $\ell \maps E \to \La$.  There is a category $\Graph_\La$ where the objects are $\La$-graphs and a morphism from the $\La$-graph
 \[ \xymatrix{\La & E \ar@<2.5pt>[r]^{s} \ar@<-2.5pt>[r]_{t} \ar[l]_{\ell} & N} \]
 to the $\La$-graph
\[ \xymatrix{\La & E' \ar@<2.5pt>[r]^{s'} \ar@<-2.5pt>[r]_{t'} \ar[l]_{\ell'} & N'} \]
is a pair of functions $f \maps N \to N', g \maps E \to E'$ such that these diagrams commute:
\[
\begin{tikzpicture}[scale=1.5]
\node (A) at (0,0) {$E$};
\node (A') at (0,-1) {$E'$};
\node (B) at (1,0) {$N$};
\node (B') at (1,-1) {$N'$};
\path[->,font=\scriptsize,>=angle 90]
(A) edge node[above]{$s$} (B)
(A') edge node[below]{$s'$} (B')
(A) edge node[left]{$g$} (A')
(B) edge node[right]{$f$} (B');

\node (C) at (2,0) {$E$};
\node (C') at (2,-1) {$E'$};
\node (D) at (3,0) {$N$};
\node (D') at (3,-1) {$N'$};
\path[->,font=\scriptsize,>=angle 90]
(C) edge node[above]{$t$} (D)
(C') edge node[below]{$t'$} (D')
(C) edge node[left]{$g$} (C')
(D) edge node[right]{$f$} (D');

\node (E) at (4,-0.5) {$\La$};
\node (G) at (5,0) {$E$};
\node (G') at (5,-1) {$E'$.};
\path[->,font=\scriptsize,>=angle 90]
(G) edge node[above]{$\ell$} (E)
(G) edge node[right]{$g$} (G')
(G') edge node[below]{$\ell'$} (E);
\end{tikzpicture}
\]
There is a functor $U \maps \Graph_\La \to \Fin\Set$ that takes an $\La$-graph to its underlying set of nodes. This has a left adjoint $L \maps \Fin\Set \to \Graph_\La$ sending any set to the $\La$-graph with that set of nodes and no edges.  Both $\Fin\Set$ and $\Graph_\La$ have colimits, and $L$ preserves them.

This sets the stage for structured cospans: \cref{thm:structured_cospans} gives us a symmetric monoidal double category ${}_L \lCsp(\Graph_\La)$ where a horizontal 1-cell is an \define{open} $\La$-\define{graph}, also called an $\La$-\define{circuit}: that is, a cospan in $\Graph_\La$ of this form:
\[
\begin{tikzpicture}[scale=1.5]
\node (A) at (0,0) {$L(X)$};
\node (B) at (1,0) {$G$};
\node (C) at (2,0) {$L(Y)$.};
\path[->,font=\scriptsize,>=angle 90]
(A) edge node[above]{$i$} (B)
(C) edge node[above]{$o$} (B);
\end{tikzpicture}
\]
For example, here is a $\La$-circuit with $\La = (0,\infty)$:
\[
\scalebox{1}{
\begin{tikzpicture}
	\begin{pgfonlayer}{nodelayer}
		\node [contact] (n1) at (-2,0) {$\bullet$};
		\node [style = none] at (-2.1,0.3) {$$};
		\node [contact] (n2) at (0,1) {$\bullet$};
		\node [style = none] at (0,1.3) {$$};
		\node [contact] (n3) at (0,-1) {$\bullet$};
		\node [style = none] at (0,-1.3) {$$};
		\node [contact] (n4) at (2,1) {$\bullet$};
		\node [style = none] at (2.1,0.3) {$$};
		\node [contact] (n5) at (2,-1) {$\bullet$};
		\node [style = none] at (2.1,0.3) {$$};

		\node [style = none] at (-1,1.1) {$2.53$};
		\node [style = none] at (-1,-1.1) {$0.71$};
		\node [style = none] at (1,1.3) {$9.6$};
		\node [style = none] at (1,-1.3) {$1.02$};
	     \node [style = none] at (-0.4,0) {$12.4$};
	     \node [style = none] at (1.6,0) {$6.3$};

		\node [style=none] (1) at (-3,0) {};
		\node [style=none] (4) at (3,0) {};

		\node [style=none] (ATL) at (-3.4,1.4) {};
		\node [style=none] (ATR) at (-2.6,1.4) {};
		\node [style=none] (ABR) at (-2.6,-1.4) {};
		\node [style=none] (ABL) at (-3.4,-1.4) {};

		\node [style=none] (X) at (-3,1.8) {$X$};
		\node [style=inputdot] (inI) at (-2.8,0) {};

		\node [style=none] (Z) at (3,1.8) {$Y$};
	 \node [style=inputdot] (outI') at (2.8,1) {};
	 \node [style=inputdot] (outI'') at (2.8,0) {};
	 \node [style=inputdot] (outI''') at (2.8,-1) {};

		\node [style=none] (MTL) at (2.6,1.4) {};
		\node [style=none] (MTR) at (3.4,1.4) {};
		\node [style=none] (MBR) at (3.4,-1.4) {};
		\node [style=none] (MBL) at (2.6,-1.4) {};

	\end{pgfonlayer}
	\begin{pgfonlayer}{edgelayer}
		\draw [style=inarrow, bend left=20, looseness=1.00] (n1) to (n2);
		\draw [style=inarrow, bend right=20, looseness=1.00] (n1) to (n3);
		\draw [style=inarrow, bend left=0, looseness=1.00] (n2) to (n4);
		\draw [style=inarrow, bend right=0, looseness=1.00] (n3) to (n4);
		\draw [style=inarrow, bend right=0, looseness=1.00] (n3) to (n5);
		\draw [style=inarrow] (n2) to (n3);
		\draw [style=simple] (ATL.center) to (ATR.center);
		\draw [style=simple] (ATR.center) to (ABR.center);
		\draw [style=simple] (ABR.center) to (ABL.center);
		\draw [style=simple] (ABL.center) to (ATL.center);
		\draw [style=simple] (MTL.center) to (MTR.center);
		\draw [style=simple] (MTR.center) to (MBR.center);
		\draw [style=simple] (MBR.center) to (MBL.center);
		\draw [style=simple] (MBL.center) to (MTL.center);
		\draw [style=inputarrow] (inI) to (n1);
		\draw [style=inputarrow] (outI') to (n4);
		\draw [style=inputarrow] (outI'') to (n5);
		\draw [style=inputarrow] (outI''') to (n5);
	\end{pgfonlayer}
\end{tikzpicture}
}
\]
The edges here represent wires, with the positive real numbers labeling them serving to describe the resistance of resistors on the wires.  The elements of the sets $X$ and $Y$ represent `terminals': that is, points where we allow ourselves to attach a wire from another circuit.

We can now also describe $\La$-circuits using our new approach to decorated cospans.   There is a symmetric lax monoidal pseudofunctor $F \maps (\Fin\Set, +) \to (\Cat, \times)$ such that for any finite set $N$, the category $F(N)$ has:
\begin{itemize}
\item objects being $\La$-\define{graph structures} on $N$: that is, $\La$-graphs where the set of nodes is $N$,
\item morphisms being morphisms of $\La$-graphs that are the identity on the set of nodes.
\end{itemize}
This gives a symmetric monoidal double category $F \lCsp$, and using \cref{thm:equiv} we can show that this is isomorphic, as a symmetric monoidal double category, to ${}_L \lCsp(\Graph_\La)$.

\subsection{Petri nets}
\label{subsec:petri}

Petri nets are widely used as models of systems in engineering and computer science \cite{GiraultValk, Peterson}.   Structured cospans have been used to define a symmetric monoidal double category of `open Petri nets' \cite{BM}, which lets us build large Petri nets out of smaller pieces.  We can also use decorated cospans to create a double category of open Petri nets.  Again this example is very similar to the example of open graphs.

A \define{Petri net} is a pair of finite sets $S$ and $T$ and functions $s,t \maps T \to \N[S]$.  Here $S$ is the set of \define{places}, $T$ is the set of \define{transitions}, and $\N[S]$ is the underlying set of the free commutative monoid on $S$.  Each transition thus has a formal sum of places as its source and target as prescribed by the functions $s$ and $t$, respectively.  Here is an example:
\[
\begin{tikzpicture}
	\begin{pgfonlayer}{nodelayer}
		\node [style=species] (I) at (0,1) {H};
		\node [style=species] (T) at (0,-1) {O};
		\node [style=transition] (W) at (2,0) {$\phantom{\Big|}\alpha$\phantom{\Big|}};
		\node [style=species] (Water) at (4,0) {$\textnormal{H}_2$O};
	\end{pgfonlayer}
	\begin{pgfonlayer}{edgelayer}
		\draw [style=inarrow, bend right=40, looseness=1.00] (I) to (W);
		\draw [style=inarrow, bend left=40, looseness=1.00] (I) to (W);
		\draw [style=inarrow, bend right=40, looseness=1.00] (T) to (W);
		\draw [style=inarrow] (W) to (Water);
	\end{pgfonlayer}
\end{tikzpicture}
\]
This Petri net has a single transition $\alpha$ with $2\textnormal{H}+\textnormal{O}$ as its source and $\textnormal{H}_2 \textnormal{O}$ as its target.

There is a category $\Petri$ with Petri nets as objects, where a morphism from the Petri net
$s, t \maps T \to \N[S]$ to the Petri net $s', t' \maps T' \to \N[S']$ is a pair of functions $f \maps S \to S', g \maps T \to T'$ such that the following diagrams commute:
	\[
	\xymatrix{
		T \ar[d]_g  \ar[r]^-{s} & \N[S] \ar[d]^-{\N[f]} \\
		T' \ar[r]_-{s'} & \N[S']
	}
	\qquad
	\xymatrix{
		T \ar[d]_g  \ar[r]^-{t} & \N[S] \ar[d]^-{\N[f]} \\
		T' \ar[r]_-{t'} & \N[S'] .
	}
	\]
There is a functor $R \maps \Petri \to \Fin\Set$ sending any Petri net to its set of places, and this has a left adjoint $L \maps \Fin\Set \to \Petri$ sending any finite set $S$ to the Petri net with $S$ as its set of places and no transitions \cite[Lemma 11]{BM}.   Since both $\Fin\Set$ and $\Petri$ have finite colimits and $L$ preserves them, \cref{thm:structured_cospans} yields a symmetric monoidal double category ${}_L \lCsp(\Petri)$ in which:
\begin{itemize}
\item objects are finite sets,
\item vertical 1-morphisms are functions,
\item horizontal 1-cells are \define{open Petri nets}, which are cospans in $\Petri$ of the form:
\[
\begin{tikzpicture}[scale=1.5]
\node (D) at (-3,0) {$L(X)$};
\node (E) at (-2,0) {$P$};
\node (F) at (-1,0) {$L(Y)$};
\path[->,font=\scriptsize,>=angle 90]
(D) edge node [above] {$i$} (E)
(F) edge node [above] {$o$} (E);
\end{tikzpicture}
\]
\item 2-morphisms are diagrams in $\Petri$ of the form:
\[
\begin{tikzpicture}[scale=1.5]
\node (E) at (3,0) {$L(X)$};
\node (F) at (5,0) {$L(Y)$};
\node (G) at (4,0) {$P$};
\node (E') at (3,-1) {$L(X')$};
\node (F') at (5,-1) {$L(Y')$.};
\node (G') at (4,-1) {$P'$};
\path[->,font=\scriptsize,>=angle 90]
(F) edge node[above]{$o$} (G)
(E) edge node[left]{$L(f)$} (E')
(F) edge node[right]{$L(g)$} (F')
(G) edge node[left]{$\alpha$} (G')
(E) edge node[above]{$i$} (G)
(E') edge node[below]{$i'$} (G')
(F') edge node[below]{$o'$} (G');
\end{tikzpicture}
\]
\end{itemize}

We can equivalently describe open Petri nets using decorated cospans.  This works very much like the previous examples.  There is a symmetric lax monoidal pseudofunctor $F \maps (\Fin\Set, +) \to (\Cat, \times)$ such that for any finite set $S$, the category $F(S)$ has:
\begin{itemize}
\item objects given by Petri nets whose set of places is $S$,
\item morphisms given by morphisms of Petri nets that are the identity on the set of places.
\end{itemize}
This gives a symmetric monoidal double category $F \lCsp$, and using \cref{thm:equiv} we can show that this is isomorphic, as a symmetric monoidal double category, to ${}_L \lCsp(\Petri)$.

The machinery of structured cospans has been used to provide a semantics for open Petri nets \cite{BM}: a symmetric monoidal double functor from ${}_L \lCsp(\Petri)$ to  a symmetric monoidal double category of `open commutative monoidal categories'.  Presumably this double functor can equivalently be obtained using the machinery of decorated cospans, with the help of \cref{thm:functoriality}.  However, it should be clear by now that so far, in cases where either structured or decorated cospans can be used, structured cospans are simpler.   We next turn to an example where decorated cospans are necessary.

\subsection{Petri nets with rates}
\label{subsec:petrirates}

In chemistry, population biology, epidemiology and other fields, modelers use `Petri nets with rates', where the transitions are labeled with nonnegative real numbers called `rate constants' \cite{Haas,Koch,Wilkinson}.   From any Petri net with rates one can systematically construct a dynamical system.  Mathematical chemists have proved deep theorems relating the topology of Petri nets with rates to the qualitative behavior of their dynamical systems \cite{CTF}.

Pollard and the first author showed how to construct an open dynamical system from any open Petri net with rates,  thus defining a functor from a category with open Petri nets with rates as morphisms to one with open dynamical systems as morphisms \cite{BP}.  They used Fong's original decorated cospans to do this.  Here we show we show how to promote these categories to double categories using our new approach to decorated cospans.

First, to briefly illustrate these ideas, here is an open Petri net with rates:
\[
\begin{tikzpicture}
	\begin{pgfonlayer}{nodelayer}
\node [style=inputdot] (1a) at (-5.3, 0.5) {};
\node [style=inputdot] (1b) at (-5.3, 0) {};
\node [style=inputdot] (2) at (-5.3, -0.5) {};
\node [style=inputdot] (3) at (1.3, 0) {};
\node [style=none] (1'a) at (-4.4, 0.5) {};
\node [style=none] (1'b) at (-4.4, 0.5) {};
\node [style=none] (2') at (-4.4, -0.5) {};
\node [style=none] (3') at (0.4, 0) {};
\node [style=species] (S) at (-4, 0.5) {$S$};
\node [style=species] (I) at (-4, -0.5) {$I$};
\node [style=species] (R) at (0, 0) {$R$};
		\node [style=transition] (tau1) at (-1.5, 0.7) {$r_1$};
            \node [style=transition] (tau2) at (-1.5,-1) {$r_2$};
		\node [style=none] (I1a) at (-5.6, 0.5) {$i_1$};
		 \node [style=none] (I1b) at (-5.6, 0) {$i_2$};
           \node [style=none] (I2) at (-5.6, -0.5) {$i_3$};
		\node [style=none] (O3) at (1.65, 0) {$o_1$};
		\node [style=none] (ATL) at (-5.8,1.4) {};
		\node [style=none] (ATR) at (-5,1.4) {};
		\node [style=none] (ABR) at (-5,-1.4) {};
		\node [style=none] (ABL) at (-5.8,-1.4) {};
		\node [style=none] (MTL) at (1,1.4) {};
		\node [style=none] (MTR) at (1.8,1.4) {};
		\node [style=none] (MBR) at (1.8,-1.4) {};
		\node [style=none] (MBL) at (1,-1.4) {};
		\node [style=none] (X) at (-5.4,1.8) {$X$};
		\node [style=none] (Z) at (1.4,1.8) {$Y$};

	\end{pgfonlayer}
	\begin{pgfonlayer}{edgelayer}
		\draw [style=inarrow, bend left =10, looseness=1.00] (S) to (tau1);
		\draw [style=inarrow, bend left=15, looseness=1.00] (I) to (tau1);
		\draw [style=inarrow, bend left=25, looseness=1.00] (tau1) to (I);
		\draw [style=inarrow, bend left=40, looseness=1.00] (tau1) to (I);
	       \draw [style=inarrow, bend right=25, looseness=1.00] (I) to (tau2);
             \draw [style=inarrow, bend right=15, looseness=1.00] (tau2) to (R);
\path[color=purple, very thick, shorten >=2pt, shorten <=2pt, ->, >=stealth] (1a) edge (1'a);
\path[color=purple, very thick, shorten >=2pt, shorten <=2pt, ->, >=stealth] (1b) edge (1'b);
\path[color=purple, very thick, shorten >=2pt, shorten <=2pt, ->, >=stealth] (2) edge (2');
\path[color=purple, very thick, shorten >=2pt, shorten <=2pt, ->, >=stealth] (3) edge (3');
		\draw [style=simple] (ATL.center) to (ATR.center);
		\draw [style=simple] (ATR.center) to (ABR.center);
		\draw [style=simple] (ABR.center) to (ABL.center);
		\draw [style=simple] (ABL.center) to (ATL.center);
		\draw [style=simple] (MTL.center) to (MTR.center);
		\draw [style=simple] (MTR.center) to (MBR.center);
		\draw [style=simple] (MBR.center) to (MBL.center);
		\draw [style=simple] (MBL.center) to (MTL.center);
	\end{pgfonlayer}
\end{tikzpicture}
\]
It is an open Petri net where the transitions are labeled with rate constants $r_1, r_2 \geq 0$.
Here is the corresponding open dynamical system:
\begin{equation}
\label{eq:openPetrir}
  \begin{array}{ccl} \displaystyle{\frac{dS(t)}{dt}} &=& -r_1 \, S(t)I(t)  + I_1(t) + I_2(t) \\ \\
\displaystyle{\frac{dI(t)}{dt}}  &=& r_1\, S(t)I(t) - r_2 \, I(t) + I_3(t)  \\  \\
\displaystyle{\frac{dR(t)}{dt}}  &=&   r_2 \, I(t)  - O_1(t).
\end{array}
\end{equation}
Here $I_1(t),I_2(t),I_3(t)$ and $O_1(t)$ are arbitrary smooth functions of time, which describe inflows and outflows at the points $i_1,i_2,i_3 \in X$ and $o_1 \in Y$.
If we drop these inflow and outflow terms, we obtain a dynamical system: an autonomous system of coupled nonlinear first-order ordinary differential equations.   In fact these equations are a famous model of infectious disease, the `SIR model', where $S(t)$, $I(t)$ and $R(t)$ describe the populations of susceptible, infected and recovered individuals, respectively.   The inflow and outflow terms allow individuals to enter or leave the population.   This in turn lets us couple the SIR model to other models, and build larger models from smaller pieces.    Indeed, a group of researchers has recently used open Petri nets and the mathematics of structured cospans in their software tool for building and manipulating epidemiological models \cite{AP,BFMLP}.

Now we turn to the details.  A \define{Petri net with rates} is a Petri net $s,t \maps T \to \N[S]$ together with a function $r \maps T \to [0,\infty)$ assigning to each transition $\tau \in T$ a nonnegative real number called its \define{rate constant}.  There is a category $\Petri_r$ whose objects are Petri nets with rates, where a morphism from
\[   \xymatrix{ [0,\infty) & T \ar[l]_-r \ar@<-.5ex>[r]_-t \ar@<.5ex>[r]^-s & \N[S] }\]
 to
 \[   \xymatrix{ [0,\infty) & T' \ar[l]_-{r'} \ar@<-.5ex>[r]_-{t'} \ar@<.5ex>[r]^-{s'} & \N[S'] }\]
is a morphism of the underlying Petri nets whose map $g \maps T \to T'$ obeys
\[    r'(\tau') = \sum_{\{\tau \in T: g(\tau) = \tau'\}} r(\tau) \]
for all $\tau' \in T'$.   This definition was suggested by Sophie Libkind; it agrees with the earlier definition \cite{BP} in the case of isomorphisms, but not in general, and the difference is important here.

We can describe open Petri nets with rates using decorated cospans.  There is a symmetric lax monoidal pseudofunctor $F \maps (\Fin\Set, +) \to (\Cat, \times)$ such that for any finite set $S$, the category $F(S)$ has:
\begin{itemize}
\item objects given by Petri nets with rates whose set of places is $S$,
\item morphisms given by morphisms of Petri nets with rates that are the identity on the set of places.
\end{itemize}
This gives a symmetric monoidal double category $F \lCsp$ where the horizontal 1-cells are called \define{open Petri nets with rates}.

There is also a symmetric monoidal double category of open dynamical systems.   A dynamical system is a vector field, thought of as giving a system of first-order ordinary differential equations.  A Petri net with rates gives a special sort of dynamical system: an \define{algebraic} vector field on $\R^S$ for some finite set $S$, meaning a vector field whose components are polynomials in the coordinates. We shall think of such a vector field as a special sort of function $v \maps \R^S \to \R^S$.

Using Fong's original approach to decorated cospans, Pollard and the first author constructed a symmetric monoidal category for which the morphisms are open dynamical systems \cite[Theorem 17]{BP}.  This category is constructed from a symmetric lax monoidal functor
$D \maps \Fin\Set \to \Set$ such that:
\begin{itemize}
\item $D$ maps any finite set $S$ to
\[ D(S) = \{ v \maps \R^S \to \R^S | \; v \textrm{ is algebraic}  \}. \]
\item $D$ maps any function $f \maps S \to S'$ between finite sets to the function $D(f) \maps D(S) \to D(S')$ given as follows:
\[ D(f)(v) = f_* \circ v \circ f^* \]
where the pullback $ f^* \maps \R^{S'} \to \R^S $ is given by
\[ f^*(c)(\sigma) = c(f(\sigma)) \]
while the pushforward $ f_* \maps \R^{S} \to \R^{S'} $ is given by
\[ f_*(c)(\sigma') = \sum_{ \{ \sigma \in S : f(\sigma) = \sigma' \} } c(\sigma). \]
\end{itemize}
The functorality of $D$ is proved in \cite[Lemma 15]{BP} while the symmetric lax monoidal stucture is given in Lemma 16 of that paper.

Since every set gives a discrete category with that set of objects, we can reinterpret $D$ as a symmetric lax monoidal pseudofunctor $D \maps (\Fin\Set, +) \to (\Cat, \times)$ which happens to actually be a functor.  Applying \cref{thm:decorated_cospans_2} we obtain a symmetric monoidal double category $D \lCsp$ where:
\begin{itemize}
\item objects are finite sets,
\item vertical 1-morphisms are functions,
\item a horizontal 1-cell from $X$ to $Y$ is an \define{open dynamical system}, that is, a cospan
\[
\begin{tikzpicture}[scale=1.5]
\node (D) at (-3,0) {$X$};
\node (E) at (-2,0) {$S$};
\node (F) at (-1,0) {$Y$};
\path[->,font=\scriptsize,>=angle 90]
(D) edge node [above] {$i$} (E)
(F) edge node [above] {$o$} (E);
\end{tikzpicture}
\]
in $\Fin\Set$ together with an algebraic vector field $v \in D(S)$,
\item a 2-morphism from
\[
\begin{tikzpicture}[scale=1.5]
\node (D) at (-3,0) {$X$};
\node (E) at (-2,0) {$S$};
\node (F) at (-1,0) {$Y,$};
\node (G) at (0,0) {$v \in D(S)$};
\path[->,font=\scriptsize,>=angle 90]
(D) edge node [above] {$i$} (E)
(F) edge node [above] {$o$} (E);
\end{tikzpicture}
\]
to
\[
\begin{tikzpicture}[scale=1.5]
\node (D) at (-3,0) {$X'$};
\node (E) at (-2,0) {$S'$};
\node (F) at (-1,0) {$Y'$};
\node (G) at (0,0) {$v' \in D(S')$};
\path[->,font=\scriptsize,>=angle 90]
(D) edge node [above] {$i'$} (E)
(F) edge node [above] {$o'$} (E);
\end{tikzpicture}
\]
is a diagram
\[
\begin{tikzpicture}[scale=1.5]
\node (E) at (3,0) {$X$};
\node (F) at (5,0) {$Y$};
\node (G) at (4,0) {$S$};
\node (E') at (3,-1) {$X'$};
\node (F') at (5,-1) {$Y'$};
\node (G') at (4,-1) {$S'$};
\path[->,font=\scriptsize,>=angle 90]
(F) edge node[above]{$o$} (G)
(E) edge node[left]{$f$} (E')
(F) edge node[right]{$g$} (F')
(G) edge node[left]{$h$} (G')
(E) edge node[above]{$i$} (G)
(E') edge node[below]{$i'$} (G')
(F') edge node[below]{$o'$} (G');
\end{tikzpicture}
\]
in $\Fin\Set$ such that $D(h)(v) = v'$.
\end{itemize}

Next, we can define a symmetric monoidal double functor
\[      \graysquare \maps F \lCsp \to D \lCsp \]
sending any open Petri net with rates to its corresponding open dynamical system.  This was already defined at the level of categories by Pollard and the first author \cite[Section 7]{BP}, who called it `gray-boxing'.   To boost this result to the double category level we use \cref{thm:functoriality}, taking the square in that theorem to be
\[
\begin{tikzpicture}[scale=1.5]
\node (A) at (0,0) {$\Fin\Set$};
\node (B) at (1.5,0) {$\Cat$};
\node (C) at (0,-1) {$\Fin\Set$};
\node (D) at (1.5,-1) {$\Cat$.};
\node (E) at (0.75,-0.5) {$\Downarrow \theta$};
\path[->,font=\scriptsize,>=angle 90]
(A) edge node[above]{$F$} (B)
(A) edge node[left]{$1$} (C)
(B) edge node[right]{$1$} (D)
(C) edge node[below]{$D$} (D);
\end{tikzpicture}
\]
Here $\theta$ is given as follows.  For any finite set $S$, $\theta_S \maps F(S) \to D(S)$ maps any Petri net with rates
\[   \xymatrix{ [0,\infty) & T \ar[l]_-r \ar@<-.5ex>[r]_-t \ar@<.5ex>[r]^-s & \N[S] }\]
to an algebraic vector field on $\R^S$, say $v$.   This vector field is defined using a standard prescription taken from chemistry, called `the law of mass action'.   Namely, for any $c \in \R^s$, we set
\[
v(c) = \sum_{\tau \in T} r(\tau) \, ( t(\tau) - s(\tau) ) c^{s(\tau)}
\]
where
\[     c^{s(\tau)} = \prod_{i \in S} {c_i}^{s(\tau)_i}  \]
and we think of $t(\tau), s(\tau) \in \N[S]$ as vectors in $\R^S$.   This formula is explained in the paper with Pollard \cite{BP}.   Using the new definition of morphisms in $F(S)$, one can check that $\theta$ extends to a monoidal natural transformation between the functors $F, D \maps (\Fin\Set,+) \to (\Cat,\times)$.  Thus, it defines a symmetric monoidal double functor $\graysquare \maps F \lCsp \to D \lCsp$.

In applications, this double functor lets us turn an open Petri net with rates into an open dynamical system as follows.  Given a Petri net with rates and defining $v$ as above, we obtain a system of first-order ordinary differential equations for a function $c \maps \R \to \R^S$
called the \define{rate equation}:
\[    \frac{d}{dt} c(t) = v(c(t)) .  \]
More generally, given an open Petri net with rates
\[
\begin{tikzpicture}[scale=1.5]
\node (D) at (-3,0) {$X$};
\node (E) at (-2,0) {$S$};
\node (F) at (-1,0) {$Y,$};
\node (G) at (0,0) {$P \in F(S)$};
\path[->,font=\scriptsize,>=angle 90]
(D) edge node [above] {$i$} (E)
(F) edge node [above] {$o$} (E);
\end{tikzpicture}
\]
we get an equation called the \define{open rate equation}:
\[     \frac{d}{dt} c(t) = v(c(t))  + i_*(I(t)) - o_*(O(t)) \]
where $v$ is defined as above and $I \maps \R \to \R^X$ and $O \maps \R \to \R^Y$ are arbitrary smooth functions describing \define{inflows} and \define{outflows}, respectively.   Applying this prescription to the open Petri net with rates shown at the start of this section one gets the differential equations \cref{eq:openPetrir}.  Other examples are worked out in \cite{BP}.

We now show that the decorated cospan double category $D \lCsp$ of open dynamical systems is not isomorphic to a structured cospan double category via \cref{thm:equiv}. Recall that in this theorem we start with the data required to build a decorated cospan category, namely a symmetric lax monoidal pseudofunctor $F \maps (\A,+) \to (\Cat,\times)$, and show that if the resulting pseudofunctor $F \maps \A \to \SMC$ factors through $\Rex$, then the opfibration $U \maps \X = \inta F \to \A$ has a left adjoint $L \maps \A \to \X$. We then obtain an isomorphism between decorated and structured cospan double categories, $F \lCsp \cong {}_L \lCsp(\X)$. We now show that in the case at hand, where $F = D$ is the functor sending each finite set $S$ to the set of dynamical systems on $\R^S$, the opfibration $U$ does \emph{not} have a left adjoint. Thus, the conditions of \cref{thm:equiv} cannot hold in this case: $F$ does not factor through $\Rex$.

Taking $D$ as above, it is easy to see that in the category $\inta D$
\begin{itemize}
\item an object is a pair $(S,v)$ where $S$ is a finite set and $v$ is an algebraic vector field $v \maps \R^S \to \R^S$,
\item a morphism from $(S,v)$ to $(S',v')$ is a function $f \maps S \to S'$ such that $v' = f_* \circ v \circ f^*$
\end{itemize}
with the usual composition of functions.   The forgetful functor $U \maps \inta D \to \Fin\Set$ acts as follows:
\begin{itemize}
\item on objects, $D(S,v) = S$,
\item on morphisms, $D(f) = f$.
\end{itemize}

To show that $U$ does not have a left adjoint, we use a known result \cite[Lemma 4.6.1]{Riehl}:
\begin{lem} \label{lem:initial}
A functor $U \maps \A \to \X$ admits a left adjoint if and only if for every $x \in \X$, the comma category $x \downarrow U$ has an initial object.
\end{lem}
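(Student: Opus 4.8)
The plan is to prove this standard result — the universal-arrow characterization of adjunctions — by establishing the two implications separately, keeping the notation of the lemma ($U \maps \A \to \X$, with a prospective left adjoint $L \maps \X \to \A$). An object of $x \downarrow U$ is a pair $(a, f\maps x \to Ua)$ with $a \in \A$, and a morphism $(a,f) \to (a',f')$ is a morphism $k \maps a \to a'$ in $\A$ with $U(k)\circ f = f'$.

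For the forward implication, suppose $L \dashv U$ with unit $\eta \maps \id_\X \To UL$. Fix $x \in \X$; I claim $(Lx, \eta_x \maps x \to U(Lx))$ is an initial object of $x \downarrow U$. Indeed, a morphism from $(Lx,\eta_x)$ to an arbitrary object $(a,f)$ is a map $k \maps Lx \to a$ with $U(k)\circ\eta_x = f$, and the adjunction supplies a bijection $\Hom_\A(Lx,a) \cong \Hom_\X(x,Ua)$ sending $k$ to $U(k)\circ\eta_x$; hence there is exactly one such $k$, the transpose of $f$. So $(Lx,\eta_x)$ is initial.

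For the reverse implication, assume each comma category $x \downarrow U$ has an initial object, and choose one, $(Lx, \eta_x \maps x \to U(Lx))$, for every $x \in \X$. First I would promote $L$ to a functor: given $g \maps x \to x'$, the pair $(Lx', \eta_{x'}\circ g)$ is an object of $x \downarrow U$, so initiality of $(Lx,\eta_x)$ yields a unique morphism $Lg \maps Lx \to Lx'$ with $U(Lg)\circ\eta_x = \eta_{x'}\circ g$; the uniqueness clause forces $L(\id_x)=\id_{Lx}$ and $L(g'g)=L(g')L(g)$, and exhibits $\eta$ as natural. Then the assignment $k \mapsto U(k)\circ\eta_x$ defines a map $\Hom_\A(Lx,a) \to \Hom_\X(x,Ua)$ which is surjective because every $(a,f)$ receives a morphism from the initial object and injective because that morphism is unique; naturality in $a$ is immediate from functoriality of $U$, and naturality in $x$ follows from the defining equation for $Lg$. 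Hence $L$ is left adjoint to $U$ with unit $\eta$.

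Honestly there is no genuine obstacle here: the only care required is extracting functoriality of $L$ and the naturality of $\eta$ and of the hom-bijection purely from the uniqueness parts of the universal properties. Since the statement is exactly \cite[Lemma~4.6.1]{Riehl}, one could equally well just cite it.
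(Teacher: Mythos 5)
Your proof is correct; it is the standard universal-arrow characterization of adjunctions, and both directions are handled cleanly (initiality of $(Lx,\eta_x)$ via the transpose bijection, and in the converse direction the uniqueness clauses delivering functoriality of $L$, naturality of $\eta$, and naturality of the hom-bijection). The paper itself does not supply a proof — it simply cites Riehl's Lemma~4.6.1 — so there is no distinct argument to compare against; your write-out fills in exactly what the citation points to, as you yourself note in the final sentence.
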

Because the empty set is initial in $\Fin\Set$, the comma category
$\emptyset \downarrow U$ is just $\inta D$.  This contains an object $(\emptyset, v_\emptyset)$, where $v_\emptyset$ is the only possible vector field on $\R^\emptyset$, namely, the zero vector field.   The only object in $\inta D$ with any morphisms to $(\emptyset, v_\emptyset)$ is $(\emptyset, v_\emptyset)$ itself, so no other object can be initial.  However $(\emptyset, v_\emptyset)$ is not initial either, because it has no morphisms to an object $(S,v)$ unless $v$ is the zero vector field on $\R^S$.  Thus by \cref{lem:initial}, $U$ does not have a left adjoint.

\section{Conclusions}\label{sec:conclusions}

We have given conditions under which a decorated cospan double category is isomorphic to a structured cospan double category, in \cref{thm:equiv}. The converse question is also interesting: is every structured cospan double category isomorphic to a decorated cospan double category? The answer is similar to the previous one: yes, under certain conditions that let us pass from an appropriate functor $L \maps \A \to \X$ to an appropriate pseudofunctor $F \maps \A \to \Cat$.

Let us now sketch the story; details will appear in a forthcoming paper \cite{CV}.   Suppose the conditions hold for constructing the double category of structured cospans ${}_L \lCsp(\X)$ as in \cref{thm:structured_cospans}.  That is, suppose $\A$ and $\X$ have finite colimits and $L \maps \A \to \X$ preserves them.    If $L$ also has a right adjoint `left inverse' (meaning the unit is the identity) $U \maps \X \to \A$, which moreover strictly preserves the chosen pushouts, it can be shown that $U$ is an opfibration.  Consequently, $U$ corresponds to a pseudofunctor $F \maps \A \to \Cat$ by the inverse Grothendieck construction, as in the first part of \cref{thm:Grothendieck}. Furthermore, if $U$ preserves finite coproducts, $F$ acquires the structure of a symmetric lax monoidal pseudofunctor $F \maps (\A,+) \to (\Cat,\times)$ by the special case of the cocartesian monoidal Grothendieck construction discussed under \cref{lem:MonGroth}.  As a result, $F$ now has enough structure to induce a double category of decorated cospans $F\lCsp$ as in \cref{thm:decorated_cospans_2}.  Finally, it can be shown that the structured and decorated cospan double categories are isomorphic as symmetric monoidal double categories: ${}_L \lCsp(\X) \cong F\lCsp$.

To give a better sense of how the pseudofunctor $F \maps \A \to\Cat$ is constructed: for each object $a \in A$, $F(a)$ is defined to be the fiber of $U$ over $a$, namely the category of all objects in $x \in \X$ such that $U(x)=a$ and morphisms $k\maps x\to y$ such that $U(k)=1_a$.  Given a morphism $f \maps a \to b$, there is a functor $F(f) \maps F(a)\to F(b)$ that maps $x \in F(a)$ to the following pushout:
\begin{displaymath}
 \begin{tikzcd}
La\ar[r,"Lf"]\ar[d,"\varepsilon_x"']\ar[dr,phantom,near end,"\ulcorner"] & Lb\ar[d] & \\
x\ar[d,dotted]\ar[r] & x+_{La}Lb\ar[d,dotted] & \textrm{in }\X \\
a\ar[r,"f"'] & b & \textrm{in }\A
 \end{tikzcd}
\end{displaymath}
where $\varepsilon_x \maps LU(x)=L(a)\to x$ is the counit of the adjunction $L\dashv U$. The fact that $U$ strictly preserves pushouts is necessary to show that the pushout is mapped, via $U$, directly down to $b$.

Even though for both \cref{thm:equiv} and the above result the conditions stated are only sufficient, they suggest that with work we could establish this functorial picture:
\begin{displaymath}
\begin{tikzpicture}
\node [align=center,draw] (A) { Lax monoidal pseudofunctors \\ $(\A,+)\to(\Cat,\times)$};
\node[below right=1 and 1.5 of A,align=center,draw] (C)  { Symmetric monoidal \\double categories };
\node[below left=1.05 and .5  of A,draw] (D)  {Special opfibrations};
\node[below left=2 and 1 of A,draw] (E)  {Special laris};
\node[below=3 of A,align=center,draw] (B)  {Finite colimit preserving \\ functors $\A\to\X$};
\node[below=0 of D] {\rotatebox{90}{$\simeq$}};
\path[->,font=\scriptsize]
(A) edge node[above,sloped, midway]{$\quad F\mapsto F\lCsp$}  (C)
(D) edge (A)
(E) edge (B)
(B) edge node[below,sloped, midway]{$\quad L\mapsto {}_{L}\lCsp(\X)$} (C);
\end{tikzpicture}
\end{displaymath}
with a natural isomorphism in the middle.  The connection between opfibrations and laris goes back to Gray's \cref{prop:opfibtolari}, but we need to specialize it to a class suitable for both the structured and decorated cospan constructions.  This would imply that starting from an appropriate middle ground, these two constructions are essentially the same.   We leave such considerations for future work.

Finally, it is worth mentioning a structured cospan double category to which the argument sketched above does \emph{not} apply.   For any functor $\phi \maps \C \to \D$ between small categories, precomposition with $\phi$ gives a functor $R \maps \widehat{\D} \to \widehat{\C}$ between presheaf categories which has a left adjoint $L \maps \widehat{\C} \to \widehat{\D}$.   Since presheaf categories have all small colimits and $L$ preserves them, the conditions of \cref{thm:structured_cospans} apply and we obtain a symmetric monoidal double category of structured cospans, ${}_L\lCsp(\widehat{\D})$.  However, the right adjoint $R$ is not always an opfibration---and when it is not, the above arguments cannot be used to show that ${}_L\lCsp(\widehat{\D})$ is a decorated cospan double category.  A simple example where $R$ is not an opfibration was  provided to us by Morgan Rogers.  Take $\phi \maps \mathsf{1} + \mathsf{1} \to \mathsf{1}$ to be the unique functor where $\mathsf{1}$ is the terminal category.  Then $R \maps \Set \to \Set^2$ is the diagonal, and the morphism $(0,0) \to (0,1)$ in $\Set^2$ admits no lift at all to $0 \in \Set$.   It will be interesting to find conditions on $\phi \maps \C \to \D$ that
guarantee ${}_L\lCsp(\widehat{\D})$ is isomorphic to a decorated double cospan category.

\appendix

\section{Definitions}
In this appendix, we gather some well-known concepts required to make the material self-contained, as well as references to more detailed expositions.

\subsection{Bicategories}
\label{subsec:bicats}

For standard 2-categorical material, we refer the reader to \cite{KS}.  For monoidal 2-categories see \cite{DS}, and for detailed definitions concerning monoidal bicategories see \cite{GPS,McCrudden,Stay}.  Briefly, a \define{monoidal} bicategory $\bA$ comes with a pseudofunctor $\otimes\maps\bA\times\bA\to\bA$ and a unit object $I$ that are associative and unital up to coherent equivalence. A \define{braided} monoidal bicategory also comes with a pseudonatural equivalence $\beta_{a,b}\maps a\ot b\to b\ot a$ and appropriate invertible modifications obeying certain equations; it is \define{sylleptic} if there is an invertible modification $1_{a\ot b}\Rrightarrow\beta_{b,a}\circ\beta_{a,b}$ obeying its own equation, and \define{symmetric} if one further axiom holds.

A \define{lax monoidal} pseudofunctor (called \emph{weak monoidal homomorphism} in some earlier references) between monoidal bicategories $F\maps\bA\to\bB$ is a pseudofunctor equipped with pseudonatural transformations with components $\phi_{a,b}\maps Fa\otimes Fb\to F(a\otimes b)$ and $\phi_0\maps I\to FI$ along with invertible modifications for associativity and unitality with components
\begin{equation}\label{eq:omega}
\begin{tikzcd}[column sep=.8in, row sep=.3in]
(F a\ot F b)\ot F c\ar[ddr,phantom,"{\cong}"]\ar[r,"{\phi_{a,b}\ot1}"]\ar[d,anchor=center,sloped,swap,"\sim"] & F (a\ot b)\ot F
c\ar[d,"{\phi_{a\ot b,c}}"] \\
F a\ot( F b\ot F c)\ar[d,"{1\ot\phi_{b,c}}"'] &
F ((a\ot b)\ot c)\ar[d,anchor=center,sloped,"\sim"] \\
F a\ot F(b\ot c)\ar[r,"{\phi_{a,b\ot c}}"'] &
F (a\ot(b\ot c))
\end{tikzcd}
\end{equation}
\begin{displaymath}
\begin{tikzcd}[column sep=.3in,row sep=.3in]
F a\ar[r,"\sim"]\ar[drr,sloped,"\sim"'] & F a\otimes I\ar[r,"1\otimes\phi_0"]\ar[dr,phantom,"\cong"] & F
a\otimes F I\ar[d,"\phi_{a,I}"] \\
&& F(a\ot I)
\end{tikzcd}\qquad
\begin{tikzcd}[column sep=.3in,row sep=.3in]
F a\ar[r,"\sim"]\ar[drr,sloped,"\sim"'] & I\otimes F a \ar[r,"\phi_0\otimes1"]\ar[dr,phantom,"\cong"] & F
I\otimes F a\ar[d,"\phi_{I,a}"] \\
&& F(I\ot a)
\end{tikzcd}
\end{displaymath}
subject to coherence conditions listed in \cite[Definition 2]{DS}.
In particular, pseudonaturality of the monoidal structure means that it comes with isomorphisms of this form:
\begin{equation}\label{eq:pseudonaturality}
\begin{tikzcd}[column sep=.5in]
 Fa\ot Fb\ar[r,"Ff\ot Fg"]\ar[d,"\phi_{a,b}"']\ar[dr,phantom,"\stackrel{\phi_{f,g}}{\cong}"] & Fa'\ot Fb'\ar[d,"\phi_{a',b'}"] \\
 F(a\ot b)\ar[r,"F(f\ot g)"'] & F(a'\ot b')
 \end{tikzcd}
\end{equation}
 natural in $f$ and $g$.
A \define{braided lax monoidal} pseudofunctor between braided monoidal bicategories comes with an invertible modification with components
\begin{equation}\label{eq:braidedpseudofun}
 \begin{tikzcd}
Fa\otimes Fb\ar[r,"\phi_{a,b}"]\ar[d,"\beta_{Fa,Fb}"']\ar[dr,phantom,"\stackrel{u_{a,b}}{\cong}"] & F(a\ot b)\ar[d,"F(\beta_{a,b})"] \\
Fb\otimes Fa\ar[r,"\phi_{b,a}"'] & F(b\ot a)
 \end{tikzcd}
\end{equation}
subject to two axioms found e.g. in \cite[Definition 14]{DS}.
A \define{sylleptic lax monoidal} pseudofunctor satisfies one extra condition and a \define{symmetric lax monoidal} pseudofunctor between symmetric monoidal bicategories is just a sylleptic one.

\subsection{Fibrations and opfibrations}\label{subsec:fibrations}

Basic material regarding the theory of fibrations can be found, for example, in \cite{Borc,Gray}. Recall that a functor $U \maps \X \to \A$ is an \textbf{opfibration} if for every $x\in\X$ with $U(x)=a$ and $f \maps a \to b$ in $\A$, there exists a \textbf{cocartesian lifting} of $f$ to $x$, namely a morphism $\beta$ in $\X$ with domain $x$ with $U(\beta) = f$ and the following universal property: for any $g\maps b\to b'$ in $\A$ and $\gamma\maps x\to y'$ in $\X$ above the composite $g\circ f$, there exists a unique $\delta\maps y\to y'$ such that $U(\delta)=g$ and $\gamma=\delta\circ\beta$ as shown below.
\begin{displaymath}
\xymatrix @R=.1in @C=.6in
{&& y'\ar @{.>}@/_/[dd] & \textrm{in }\X\\
x\ar[r]_-{\beta} \ar @{.>}@/_/[dd]
\ar[urr]^-{\gamma} &
y \ar @{.>}@/_/[dd] \ar @{-->}[ur]_-{\exists! \delta}
  \\
&& b' &&\\
a\ar[r]_-{f=U(\beta)} \ar[urr]^-{g\circ f=U(\gamma)}
 & b \ar[ur]_-g && \textrm{in }\A}
\end{displaymath}
The category $\X$ is called the \textbf{total} category and $\A$ is called the \textbf{base} category of the opfibration. For any $a\in\A$, the \textbf{fiber} above $a$ is the category $\X_a$ consisting of all objects that map to $a$ and \define{vertical} morphisms between them, i.e., morphisms mapping to $1_a$.

Assuming the axiom of choice, we may select a cocartesian lifting of each morphism $f\maps a\to b$ in $\A$ to each $x\in\X _a$, denoted by $\mathrm{Cocart}(f,x)\maps x\to f_!(x)$, rendering $U$ a so-called \textbf{cloven} opfibration. This choice induces \textbf{reindexing functors} $f_!\maps\X _a\to\X _b$ between the fibers, which by the lifting's universal property come equipped with natural isomorphisms $(1_a)_!\cong 1_{\X _a}$ and $(f\circ g)_!\cong f_!\circ g_!$.   With the help of these, any cloven opfibration $U \maps \X \to \A$ gives a pseudofunctor $F \maps \A \to \Cat$, where $\A$ is viewed as a 2-category with trivial 2-morphisms, $F(a) = \X_a$ for each object $a \in \A$, and $F(f) = f_!$ for each morphism $f$ in $\A$.

In fact, there is a 2-equivalence between opfibrations and pseudofunctors induced by the so-called `Grothendieck construction', or more specifically the `covariant'  Grothendieck construction, since there is also a version of this construction for fibrations.  Let $\OpFib(\A)$ denote the 2-subcategory of the slice 2-category $\Cat/ \A$ of opfibrations over $\A$, functors that
preserve cocartesian liftings, and natural transformations with vertical components.
\begin{defn}\label{def:GrothCat}
For any pseudofunctor $F\maps\A\to\Cat$ where $\A$ is a category viewed as a 2-category with trivial 2-morphisms, the \textbf{Grothendieck category}
$\inta F$ has
\begin{itemize}
\item objects pairs $(a, s \in F(a))$ and
\item a morphism from $(a, s \in F(a))$ to $(b, t\in F(b))$ is a pair $(f \maps a \to b,k \maps F(f)(s) \to t)$.
\end{itemize}
The identity morphism of $(a, s\in F(a))$ is $(1_a\maps a\to a,F(1_a)(s)\xrightarrow{\sim} s)$ and the composite of $(f,k) \maps (a,s) \to (b,t)$ and $(g,\ell) \maps (b,t) \to (c,u)$ is
\[
\left(a\xrightarrow{f}b\xrightarrow{g}c, \; F(g\circ f)(x)\xrightarrow{\sim} (Fg)((Ff)(s))\xrightarrow{(Fg)(k)}(Fg)(t)\xrightarrow{\ell}u\right)
\]
This is an opfibered category over $\A$ via the obvious forgetful functor, with fibers $(\inta F)_a=F(a)$ and reindexing functors $f_!=F(f)$.
\end{defn}
The constructions sketched so far---the Grothendieck construction and the construction of a pseudofunctor into $\Cat$ from a cloven opfibration---are the two halves of the following equivalence.

\begin{thm}\label{thm:Grothendieck}\hfill
\begin{enumerate}
\item Every opfibration $\X \to \A$ gives rise to a pseudofunctor $\A \to \Cat$.
\item Every pseudofunctor $\A \to \Cat$ gives rise to an opfibration $\inta F \to\A$.
\item The above correspondences yield an equivalence of 2-categories
\begin{displaymath}
[\A,\Cat]_\pse \simeq \OpFib(\A)
\end{displaymath}
where $[\A,\Cat]_\pse$ is the 2-category of pseudofunctors from $\A$ to $\Cat$, pseudonatural transformations, and modifications.
\end{enumerate}
\end{thm}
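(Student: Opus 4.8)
The plan is to verify that the two constructions already laid out in this subsection---the passage from a cloven opfibration to a pseudofunctor, and the Grothendieck category $\inta F$ of \cref{def:GrothCat}---are well defined, and then to promote them to 2-functors that are mutually quasi-inverse. This is a classical result, with careful treatments in \cite{Borc,Gray}, and the argument follows the standard pattern; below I describe the skeleton one would fill in.

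First I would check that (1) and (2) produce the stated data. For (1), fix a cleavage of $U\maps\X\to\A$, set $F(a)=\X_a$ and $F(f)=f_!$, and obtain the comparison isomorphisms $F(1_a)\cong 1_{\X_a}$ and $F(g\circ f)\cong F(g)\circ F(f)$ from the universal property of cocartesian liftings: a composite of cocartesian liftings is cocartesian, and any two cocartesian liftings of the same morphism at the same object are related by a unique vertical isomorphism. The pentagon and unit coherence axioms for $F$ then hold automatically, since in each case both sides are vertical maps out of the same cocartesian lifting and hence coincide by uniqueness. For (2), one verifies that the composition law displayed after \cref{def:GrothCat} is associative and unital; the associator and unitor isomorphisms of the pseudofunctor $F$ are precisely what makes this work, and the identities to be checked are instances of the pseudofunctor coherence axioms. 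Finally, $\pi_F\maps\inta F\to\A$ is an opfibration because the cocartesian lift of $f\maps a\to b$ at $(a,s)$ is $(f,1_{F(f)(s)})\maps(a,s)\to(b,F(f)(s))$, whose universal property unwinds directly from the definition of morphisms in $\inta F$.

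Next I would extend both constructions to 2-functors and exhibit the comparison. A morphism of opfibrations over $\A$ restricts on fibers to a family of functors that assemble into a pseudonatural transformation, with structure 2-cells again supplied by universal properties, while a natural transformation with vertical components becomes a modification. Conversely a pseudonatural transformation $\alpha\maps F\To F'$ induces a cocartesian-lifting-preserving functor $\inta F\to\inta F'$ over $\A$, sending $(a,s)$ to $(a,\alpha_a(s))$, and a modification becomes a vertical natural transformation. Then one produces the quasi-inverse witnesses: starting from $F$, equipping $\inta F$ with its canonical cleavage gives back a pseudofunctor isomorphic to $F$ through the identity-on-objects comparison; starting from a cloven opfibration $U$, the functor $\X\to\inta F$ sending $x\in\X_a$ to $(a,x)$, and $\gamma\maps x\to y$ to $U(\gamma)$ paired with the vertical part of $\gamma$ relative to the chosen lift, is an equivalence over $\A$ preserving cocartesian liftings. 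Checking that these comparisons are pseudonatural in $U$ and in $F$ yields the 2-equivalence $[\A,\Cat]_\pse\simeq\OpFib(\A)$.

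The main obstacle is the coherence bookkeeping rather than any conceptual difficulty: one must confirm that the comparison isomorphisms assembled from universal properties satisfy the pseudofunctor, pseudonatural-transformation, and modification axioms, and that the quasi-inverse witnesses are (pseudo)natural at the 2-categorical level. Each individual verification reduces to a uniqueness clause in a universal property or to a coherence axiom already assumed for $F$, but organizing them so that the 2-functoriality and the 2-natural isomorphisms are manifest takes care. Since the result is entirely standard, in practice I would simply cite \cite{Borc} or \cite{Gray} and record only the explicit descriptions of the two constructions, which are the ones used elsewhere in this paper.
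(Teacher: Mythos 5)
Your proposal is correct and ultimately takes the same route as the paper, which simply cites the classical literature (the paper cites Jacobs; you cite Borceux and Gray and conclude you would do the same). The sketch you give in the interim — building the two 2-functors from the fiber/reindexing data and from $\inta F$, producing the comparison equivalences $\X \simeq \inta F$ over $\A$ and $F \cong (\text{reindexing of } \inta F)$, and noting that every coherence check reduces to a uniqueness clause in a universal property or a pseudofunctor axiom — is the standard argument those references contain, and its details are accurate, including the observation that the cocartesian lift of $f$ at $(a,s)$ in $\inta F$ is $(f, 1_{F(f)(s)})$.
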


\begin{proof}
The idea goes back to Grothendieck; a proof can be found in, for example, \cite[Section 1.10]{Jacobs}.
\end{proof}

All the above concepts and results have analogues for fibrations.   A functor $\Phi \maps \X \to \A$ a \define{fibration} if and only if $\Phi\op \maps \X\op \to \A\op$ is an opfibration.  Equivalently, $\Phi$ is a fibration if and only if for every $y \in \X$ with $U(y) = b$ and $f \maps a \to b$ in $\A$ there exists a cartesian lifting of $f$ to $y$, where this concept is defined dually to cocartesian lifting.     Furthermore, if $\Phi$ is a fibration there is a contravariant reindexing functor
\[    f^\ast \maps \X_b \to \X_a \]
for each morphism $f \maps a \to b$.  Moreover, $\Phi\colon\X\to\A$ is a \define{bifibration} if it is both a fibration and opfibration.

If $\X$ and $\A$ are (symmetric) monoidal categories, a \define{(symmetric) monoidal fibration} $\Phi \maps \X \to \A$ is a fibration that is also a (symmetric) strict monoidal functor, such that the tensor product preserves cartesian liftings.  Similarly, a \define{(symmetric) monoidal opfibration} is an opfibration that is also a (symmetric) strict monoidal functor, such that the tensor product preserves cocartesian liftings.   Finally, a \define{(symmetric) monoidal bifibration} is a bifibration that is also a (symmetric) strict monoidal functor such that the tensor product preserves both cartesian and cocartesian liftings.

In a bifibration we have both covariant and contravariant reindexing functors, and in
fact $f_!$ is left adjoint to $f^*$ \cite[Proposition 3.9]{Shulman2008}.   Using this, one can easily show that for any commutative square in $\A$
\[
\begin{tikzpicture}[scale=1.5]
\node (A) at (0,0) {$a$};
\node (B) at (1.5,0) {$b$};
\node (C) at (0,-1) {$c$};
\node (D) at (1.5,-1) {$d$};
\path[->,font=\scriptsize,>=angle 90]
(A) edge node[above]{$h$} (B)
(A) edge node[left]{$k$} (C)
(B) edge node[right]{$g$} (D)
(C) edge node[below]{$f$} (D);
\end{tikzpicture}
\]
the following square commutes up to a specified natural transformation:
\[
\begin{tikzpicture}[scale=1.5]
\node (A) at (0,0) {$\X_a$};
\node (B) at (1.5,0) {$\X_b$};
\node (C) at (0,-1) {$\X_c$};
\node (D) at (1.5,-1) {$\X_d$};
\node (E) at (0.75,-0.5) {$\scriptstyle\Downarrow \theta$};
\path[->,font=\scriptsize,>=angle 90]
(B) edge node[above]{$h^\ast$} (A)
(A) edge node[left]{$k_!$} (C)
(B) edge node[right]{$g_!$} (D)
(D) edge node[below]{$f^\ast$} (C);
\end{tikzpicture}
\]
where $\theta$ is built as a composite involving the unit of the adjunction between
$g_!$ and $g^\ast$ and the counit of the adjunction between $k_!$ and $k^\ast$:
\begin{equation}
\label{theta}
   k_! h^\ast \To k_! h^\ast g^\ast g_! \cong k_! k^\ast f^\ast g_! \To f^\ast g_! .
\end{equation}
If $\theta$ is a natural isomorphism whenever the original square in $\A$ is
a pushout, we say that the bifibration $\Phi$ is \define{Beck--Chevalley}.  (Shulman  uses the term `strongly co-BC' \cite[Definition 13.21] {Shulman2008}.)

\subsection{Double categories}\label{sec:doublecats}

For double categories we follow the notation of our paper on structured cospans \cite{BC}, which in turn follows that of Hansen and Shulman \cite{HS,Shulman2010}.  Our double categories are always `pseudo' double categories, where composition of horizontal 1-cells is unital and associative only up to coherent isomorphism \cite{GP1,GP2,Shulman2008}.

\begin{defn}\label{defn:double_category}
A \define{double category} $\lD$ consists of a \define{category of objects}
$\lD_0$, a \define{category of arrows} $\lD_1$, functors
\[   S,T \maps \lD_1 \to \lD_0, \;  U\maps \lD_0 \to \lD_1, \; \textrm{ and }
   \odot \maps \lD_1 \times_{\lD_0} \lD_1 \to \lD_1\]
called the \define{source} and \define{target}, \define{unit} and \define{composition}
functors, respectively, such that
\[  S(U_{A})=A=T(U_{A}),  \quad S(M \odot N)=S(N), \quad T(M \odot N)=T(M), \]
and natural isomorphisms called the \define{associator}
\[ \alpha_{L,M,N} \maps (L \odot M) \odot N \to L \odot (M \odot N)  \]
and \define{left and right unitors}
\[		\lambda_N \maps U_{T(N)} \odot N \to N, \qquad
     \rho_N \maps N \odot U_{S(N)} \to N \]
such that $S(\alpha), S(\lambda), S(\rho), T(\alpha), T(\lambda)$ and $T(\rho)$ are all identities,
such that the standard coherence laws hold: the pentagon identity for the
associator and the triangle identity for the left and right unitor.
\end{defn}

Objects of $\lD_0$ are called \define{objects} and morphisms of $\lD_0$ are called \define{vertical 1-morphisms}. Objects of $\lD_1$ are called \define{horizontal 1-cells} and morphisms of $\lD_1$ are called \define{2-morphisms}.   We can draw a 2-morphism $a \maps M \to N$ with $S(a)=f,T(a)=g$ as follows:
\[
\begin{tikzcd}
A\ar[tick,r,"M"]\ar[d,"f"']\ar[dr,phantom,"\scriptstyle\Downarrow\alpha"] & B\ar[d,"g"] \\
C\ar[tick,r,"N"'] & D
\end{tikzcd}
\]
We call $M$ and $N$ the \define{horizontal source and target} of $a$ respectively, and call $f$ and $g$ its \define{vertical source and target}.   A 2-morphism where $f$ and $g$ are identities is called \textbf{globular}.   For example, the associator and unitors in a double category are globular 2-morphisms.

\begin{defn}\label{def:doublefun}
Given double categories $\lD$ and $\lE$, a \define{double functor} $\lF \maps \lD \to \lE$ consists of:
\begin{itemize}
\item{functors $\lF_0 \maps \lD_0 \to \lE_0$ and $\lF_1 \maps \lD_1 \to \lE_1$ such that $S \lF_1 = \lF_0 S$ and $T \lF_1 = \lF_0 T$, and}
\item{for every composable pair of horizontal 1-cells $M$ and $N$ in $\lD$, a natural transformation $\mathbb{F}_\odot \maps \lF(N) \odot \lF(M) \to \lF(N \odot M)$ called the \define{composite comparison} and for every object $a$ in $\lD$, a natural transformation $\mathbb{F}_U \maps U_{\lF_0(a)} \to \lF_1(U_a)$ called the \define{unit comparison}. The components of each of these natural transformations are globular isomorphisms that must obey coherence laws analogous to those of a monoidal functor.}
\end{itemize}
\end{defn}

\begin{defn}
Given double functors $\lF,\mathbb{G} \maps \lD \to \lE$, a \define{double natural transformation} $\alpha \maps \lF \Rightarrow \mathbb{G}$ consists of natural transformations $\alpha_0 \maps \lF_0 \Rightarrow \mathbb{G}_0$ and $\alpha_1 \maps \lF_1 \Rightarrow \mathbb{G}_1$ such that:
\begin{itemize}
\item{$S(\alpha_M) = \alpha_{S(M)}$ and $T(\alpha_M) = \alpha_{T(M)}$ for all horizontal 1-cells $M$ of $\lD$,}
\item{$\alpha \circ \lF_\odot = \mathbb{G}_\odot \circ (\alpha_M \odot \alpha_N)$ for all composable pairs $M$ and $N$ of horizontal 1-cells in $\lD$, and}
\item{$\alpha \circ \lF_U = \mathbb{G}_U \circ \alpha$ for all objects $a$ of $\lD$.}
\end{itemize}
The double natural transformation $\alpha$ is a \define{double natural isomorphism} if both $\alpha_0$ and $\alpha_1$ are natural isomorphisms.
\end{defn}

Let $\Dbl$ denote the 2-category of double categories, double functors and double transformations. One can check that $\Dbl$ has finite products, and in any 2-category with finite products we can define a `pseudomonoid', which is a categorified analogue of a monoid \cite{DS}. For example, a pseudomonoid in $\Cat$ is a monoidal category.   We can also define symmetric pseudomonoids, which in $\Cat$ are symmetric monoidal categories.

\begin{defn}
\label{defn:monoidal_double_category}
A \define{monoidal double category} is a pseudomonoid in $\Dbl$, namely it is equipped with double functors $\otimes\maps\lD\times\lD\to\lD$, $I\maps\one\to\lD$ and invertible double transformations $\otimes\circ(1\times\otimes)\cong\otimes\circ(\otimes\times1)$, $\otimes\circ(1\times I)\cong1\cong\otimes\circ(I\times1)$ satisfying standard axioms.
\end{defn}
\noindent
Explicitly, a monoidal double category is a double category $\lD$ with:
\begin{itemize}
\item monoidal structures on both $\lD_0$ and $\lD_1$ (each with tensor product denoted $\otimes$, associator $a$, left unitor $\ell$ and right unitor $r$ and unit object $I$), such that $U \maps \lD_0 \to \lD_1$ strictly preserves the unit objects and $S,T \maps \lD_1 \to \lD_0$ are strict monoidal,
\item the structure of a double functor on $\otimes$:
that is, invertible globular 2-morphisms
\[ \chi \maps (M_2\otimes N_2)\odot(M_1\otimes N_1)  \simrightarrow
(M_2\odot M_1)\otimes (N_2\odot N_1)\]
\[ \mu \maps U_{A\otimes B} \simrightarrow U_A \otimes U_B\]
obeying a list of equations that can be found after \cite[Definition 2.10]{HS} and also \cite[Definition A.5]{BC}.
\end{itemize}

\begin{defn}
\label{defn:symmetric_monoidal_double_category}
A \define{symmetric monoidal double category} is a symmetric pseudomonoid in $\Dbl$.
\end{defn}
\noindent
Explicitly, a symmetric monoidal double category is a monoidal double category $\lD$ such that:
\begin{itemize}
		\item $\lD_0$ and $\lD_1$ are symmetric monoidal categories, with braidings both denoted $\beta$.
		\item The functors $S$ and $T$ are symmetric strict monoidal functors.
		\item The following diagrams commute, expressing that the braiding is a transformation of double categories:
		\begin{displaymath}
		\begin{tikzpicture}
			\node (A) at (0,1.5) {\footnotesize{$ (M_2\otimes N_2)\odot (M_1\otimes N_1)$}};
			\node (A') at (0,0) {\footnotesize{$(M_2 \odot M_1) \otimes (N_2 \odot N_1)$}};
			\node (B) at (5,1.5) {\footnotesize{$ (N_2 \otimes M_2)\odot (N_1 \otimes M_1)$}};
			\node (B') at (5,0) {\footnotesize{$(N_2\odot N_1) \otimes (M_2\odot M_1)$}};
			\path[->,font=\scriptsize]
				(A) edge node[left]{$\chi$} (A')
				(A) edge node[above]{$\beta \odot \beta$} (B)
				(B) edge node[right]{$\chi$} (B')
				(A') edge node[above]{$\beta$} (B');
		\end{tikzpicture}
		\quad
		\begin{tikzpicture}
			\node (A) at (0,1.5) {\footnotesize{$U_A \otimes U_B$}};
			\node (A') at (0,0) {\footnotesize{$U_B\otimes U_A$}};
			\node (B) at (2,1.5) {\footnotesize{$U_{A\otimes B} $}};
			\node (B') at (2,0) {\footnotesize{$U_{B\otimes A}$}};
			\path[->,font=\scriptsize]
				(A) edge node[left]{$\beta$} (A')
				(B) edge node[above]{$\mu$} (A)
				(B) edge node[right]{$U_\beta$} (B')
				(B') edge node[above]{$\mu$} (A');
		\end{tikzpicture}
		\end{displaymath}
\end{itemize}

\begin{defn}\label{defn:monoidal_double_functor}
Given symmetric monoidal double categories $\lD$ and $\lE$, a \define{symmetric monoidal double functor} $\lF \maps \lD \to \lE$ is a double functor $\lF$ together with invertible transformations $\lF_\otimes\maps {\otimes \circ (\lF,\lF)} \to \lF \circ \otimes$ and $I_\lE \to \lF \circ I_\lD$ that satisfy the usual coherence axioms for a symmetric monoidal functor.
\end{defn}
\noindent
Explicitly, a symmetric monoidal double functor is a double functor $\lF \maps \lD \to \lE$ such that:
\begin{itemize}
\item{$\lF_0$ and $\lF_1$ are symmetric monoidal functors,}
\item{we have equalities $\lF_0 S_{\lD} = S_{\lE} \lF_1$ and $\lF_0 T_{\lD} = T_{\lE} \lF_1$ of monoidal functors, and}
\item{the following diagrams commute, expressing that $\phi$ is a transformation of double categories:
		\begin{displaymath}
		\begin{tikzpicture}
			\node (A) at (0,1.5) {\footnotesize{$ (\lF(M_2)\otimes \lF(N_2))\odot (\lF(M_1)\otimes \lF(N_1))$}};
			\node (A') at (0,0) {\footnotesize{$(\lF(M_2) \odot \lF(M_1)) \otimes (\lF(N_2) \odot \lF(N_1))$}};
			\node (B) at (5,1.5) {\footnotesize{$\lF(M_2 \otimes N_2)\odot \lF(M_1 \otimes N_1)$}};
			\node (B') at (5,0) {\footnotesize{$\lF((M_2 \otimes N_2)\odot(M_1 \otimes N_1))$}};
			\node (A'') at (0,-1.5) {\footnotesize{$\lF(M_2 \odot M_1) \otimes \lF(N_2 \odot N_1)$}};
			\node (B'') at (5,-1.5) {\footnotesize{$\lF((M_2 \odot M_1)\otimes(N_2 \odot N_1))$}};
			\path[->,font=\scriptsize]
				(A) edge node[left]{$\chi$} (A')
				(A) edge node[above]{$\lF_\otimes \odot \lF_\otimes$} (B)
				(B) edge node[right]{$\lF_\odot$} (B')
				(A') edge node [left] {$\lF_\odot \otimes \lF_\odot$} (A'')
				(B') edge node [right] {$\lF(\chi)$} (B'')
				(A'') edge node[above]{$\lF_\otimes$} (B'');
		\end{tikzpicture}
		\quad
		\begin{tikzpicture}
			\node (A) at (0,1.5) {\footnotesize{$U_{\lF(a) \otimes \lF(b)}$}};
			\node (A') at (0,0) {\footnotesize{$U_{\lF(a)} \otimes U_{\lF(b)}$}};
			\node (B) at (2,1.5) {\footnotesize{$U_{\lF(a \otimes b)} $}};
			\node (B') at (2,0) {\footnotesize{$\lF(U_{a \otimes b})$}};
			\node (A'') at (0,-1.5) {\footnotesize{$\lF(U_a) \otimes \lF(U_b)$}};
			\node (B'') at (2,-1.5) {\footnotesize{$\lF(U_a \otimes U_b) $}};
			\path[->,font=\scriptsize]
				(A) edge node[left]{$\mu$} (A')
				(A') edge node [left] {$\lF_U \otimes \lF_U$} (A'')
				(B') edge node [right] {$\lF(\mu)$} (B'')
				(A) edge node[above]{$U_{\lF_\otimes}$} (B)
				(B) edge node[right]{$\lF_U$} (B')
				(A'') edge node[above]{$\lF_\otimes$} (B'');
		\end{tikzpicture}
		\end{displaymath}
}
\end{itemize}

\begin{defn}\label{def:isomorphism}
An \define{isomorphism} of symmetric monoidal double categories is a symmetric monoidal
double functor $\lF \maps \lD \to \lE$ that has an inverse.
\end{defn}

A symmetric monoidal double functor is an isomorphism if
it is bijective on objects, vertical 1-morphisms, horizontal 1-cells and 2-morphisms.

\begin{defn}\label{def:companion}
  Let $\lD$ be a double category and $f\maps A\to B$ a vertical
  1-morphism.  A \define{companion} of $f$ is a horizontal 1-cell
  $\fhat\maps A\to B$ together with 2-morphisms
	\[
	\raisebox{-0.5\height}{
	\begin{tikzpicture}
		\node (A) at (0,1) {$A$};
		\node (B) at (1,1) {$B$};
		\node (A') at (0,0) {$B$};
		\node (B') at (1,0) {$B$};
		\path[->,font=\scriptsize,>=angle 90]
			(A) edge node[above]{$\hat{f}$} (B)
			(A) edge node[left]{$f$} (A')
			(B) edge node[right]{$1$} (B')
			(A') edge node[below]{$U_B$} (B');
		%
		\node () at (0.5,0.5) {\scriptsize{$\Downarrow$}};
	\end{tikzpicture}
	}
	\quad \text{ and } \quad
	\raisebox{-0.5\height}{
	\begin{tikzpicture}
		\node (A) at (0,1) {$A$};
		\node (B) at (1,1) {$A$};
		\node (A') at (0,0) {$A$};
		\node (B') at (1,0) {$B$};
		\path[->,font=\scriptsize,>=angle 90]
			(A) edge node[above]{$U_A$} (B)
			(A) edge node[left]{$1$} (A')
			(B) edge node[right]{$f$} (B')
			(A') edge node[below]{ $\hat{f}$} (B');
		%
		\node () at (0.5,0.5) {\scriptsize{$\Downarrow$}};
	\end{tikzpicture}
	}
	\]
  such that the following equations hold.
	\begin{equation}
	\label{eq:CompanionEq}
	\raisebox{-0.5\height}{
	\begin{tikzpicture}
		\node (A) at (0,2) {$A$};
		\node (B) at (1.1,2) {$A$};
		\node (A') at (0,1) {$A$};
		\node (B') at (1.1,1) {$B$};
		\node (A'') at (0,0) {$B$};
		\node (B'') at (1.1,0) {$B$};
		\path[->,font=\scriptsize,>=angle 90]
			(A) edge node[left]{$1$} (A')
			(A') edge node[left]{$f$} (A'')
			(B) edge node[right]{$f$} (B')
			(B') edge node[right]{$1$} (B'')
			(A) edge node[above]{$U_A$} (B)
			(A') edge  (B')
			(A'') edge node[below]{$U_B$} (B'');
		%
		\draw[line width=2mm,white] (0.5,.925) -- (0.5,1.075);
		\node () at (0.5,0.5) {\scriptsize{$\Downarrow$}};
		\node () at (0.5,1.5) {\scriptsize{$\Downarrow$}};
		\node () at (0.5,1) {\scriptsize $\widehat{f}$};
	\end{tikzpicture}
	}
	\raisebox{-0.5\height}{=}
	\raisebox{-0.5\height}{
	\begin{tikzpicture}
		\node (A) at (0,1) {$A$};
		\node (B) at (1,1) {$A$};
		\node (A') at (0,0) {$B$};
		\node (B') at (1,0) {$B$};
		\path[->,font=\scriptsize,>=angle 90]
		(A) edge node[left]{$f$} (A')
		(B) edge node[right]{$f$} (B')
		(A) edge node[above]{$U_A$} (B)
		(A') edge node[below]{$U_B$} (B');
		%
		\node () at (0.5,0.5) {\scriptsize{$\Downarrow U_f$}};
	\end{tikzpicture}
	}
	\raisebox{-0.5\height}{\text{   and   }}
	\raisebox{-0.5\height}{
	\begin{tikzpicture}
		\node (A) at (0,1) {$A$};
		\node (A') at (0,0) {$A$};
		\node (B) at (1.4,1) {$A$};
		\node (B') at (1.4,0) {$B$};
		\node (C) at (2.8,1) {$B$};
		\node (C') at (2.8,0) {$B$};
		\node (A'') at (0,-1) {$A$};
		\node (C'') at (2.8,-1) {$B$};
		\path[->,font=\scriptsize,>=angle 90]
			(A) edge node[left]{$1$} (A')
			(B) edge node [left] {$f$} (B')
			(C) edge node[right]{$1$} (C')
			(A) edge node[above]{$U_A$} (B)
			(B) edge node[above]{$\hat{f}$} (C)
			(A') edge (B')
			(B') edge (C')
			(A'') edge node[below]{$\hat{f}$} (C'')
			(A') edge node[left]{$1$} (A'')
			(C') edge node[right]{$1$} (C'');
		%
	\draw[line width=2mm,white] (0.7,-.05) -- (0.7,.05);
	\draw[line width=4.4mm,white] (2.1,-.05) -- (2.1,.05);
		\node () at (0.7,0.5) {\scriptsize{$\Downarrow$}};
		\node () at (2.1,0.5) {\scriptsize{$\Downarrow$}};
		\node () at (1.4,-0.6) {\scriptsize{$\Downarrow \lambda_{\hat{f}}$}};
		\node () at (0.7,0) {$\hat{f}$};
		\node () at (2.1,0) {\scriptsize{$U_B$}};

	\end{tikzpicture}
	}
	\raisebox{-0.5\height}{=}
	\raisebox{-0.5\height}{
	\begin{tikzpicture}
	     \node (A0) at (0,2) {$A$};
	     \node (B0) at (1,2) {$A$};
		\node (C0) at (2,2) {$B$};
		\node (A) at (0,1) {$A$};
		\node (C) at (2,1) {$B$};
		%
		\path[->,font=\scriptsize,>=angle 90]
			(A0) edge node[above]{$U_A$} (B0)
			(B0) edge node[above]{$\hat{f}$} (C0)
			(A0) edge node[left]{$1$} (A)
			(C0) edge node[right]{$1$} (C)
			(A) edge node[below]{$\hat{f}$} (C);
		%
		\node () at (1,1.4) {\scriptsize{$\Downarrow \rho_{\hat{f}}$}};
	\end{tikzpicture}
	}
	\end{equation}
  A \define{conjoint} of $f$, denoted $\fchk \maps B \to A$, is a
  companion of $f$ in the double category
  obtained by reversing the horizontal 1-cells, but not the vertical
  1-morphisms, of $\lD$.
\end{defn}
\noindent

\begin{defn}
\label{defn:fibrant}
We say that a double category is \define{fibrant} if every vertical
1-morphism has both a companion and a conjoint.
\end{defn}

\begin{thm}{\cite[Theorem 1.1]{HS}}
\label{Shulhorizontalbicat}
If $\lD$ is a fibrant monoidal double category, then its horizontal bicategory $\bD$ is a monoidal bicategory. If $\lD$ is braided or symmetric, then so is $\bD$.
\end{thm}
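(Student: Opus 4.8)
The plan is to transport the monoidal structure of $\lD$ along the horizontal-bicategory construction, using fibrancy — that is, the companions and conjoints of \cref{def:companion} — to turn the vertical structure 1-\emph{isomorphisms} of $\lD$ (the associator, the unitors, and in the braided case the braiding) into the pseudonatural \emph{equivalences} that a monoidal bicategory requires. The guiding principle is the universal property of companions: in a fibrant double category the assignment $f\mapsto\widehat f$ from vertical 1-morphisms to horizontal 1-cells is (pseudo)functorial and compatible with horizontal composition and with double functors, so essentially every piece of structure and every axiom of the output bicategory is forced by, and reduces mechanically to, the coherence data already present in the monoidal double category $\lD$.

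First I would define the tensor pseudofunctor $\otimes\maps\bD\times\bD\to\bD$ as the action of the double functor $\otimes\maps\lD\times\lD\to\lD$ on horizontal 1-cells and on globular 2-cells (a double functor sends globular 2-cells to globular 2-cells); the compositor and unitor constraints making this a pseudofunctor \emph{of bicategories} are precisely the composite comparison $\chi$ and unit comparison $\mu$ of the monoidal double category $\lD$ (\cref{defn:monoidal_double_category}), and their pseudofunctor axioms are the double-functor coherence laws satisfied by $\chi$ and $\mu$. The unit is obtained the same way from $I\maps\one\to\lD$.

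Next I would build the associativity data. Its component at $(A,B,C)$ is taken to be $\mathbf a_{ABC}:=\widehat{a_{A,B,C}}$, the companion of the vertical 1-isomorphism $a_{A,B,C}$ of $\lD_0$, with pseudo-inverse its conjoint $\check{a}_{A,B,C}$; the companion equations \cref{eq:CompanionEq} supply the unit and counit exhibiting this as an adjoint equivalence in $\bD$, and pseudonaturality in the three arguments is witnessed by globular 2-cells obtained by ``bending'', via \cref{eq:CompanionEq}, the naturality 2-morphisms in $\lD_1$ packaged into the associator double transformation. The left and right unitors $\mathbf l,\mathbf r$ are built from the unitor double transformations of $\lD$ in exactly the same manner. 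The invertible modifications of $\bD$ — the pentagonator and the unit-coherence modifications — are then produced by bending the coherence 2-morphisms of $\lD$ that make $a,\lambda,\rho$ satisfy the pentagon and triangle identities (up to the associators of $\lD_0$ and $\lD_1$); the modification axioms, and finally the two coherence axioms of a monoidal bicategory (the $K_5$/associahedron axiom for the pentagonator and the triangle-type axiom), unwind to identities of 2-morphisms in $\lD_1$ that hold because $\lD$ is monoidal and because of the zig-zag identities for companions. The braided, sylleptic and symmetric cases follow the identical recipe: a braiding double transformation $\beta$ on $\lD$ yields a braiding $\mathbf b_{AB}:=\widehat{\beta_{A,B}}$ on $\bD$, the hexagon-type coherence cells of $\lD$ yield the two hexagonators, the syllepsis modification $1\Rrightarrow\mathbf b\,\mathbf b$ comes from the corresponding cell of $\lD$, and each braided/sylleptic/symmetric axiom of $\bD$ reduces to the corresponding axiom of $\lD$.

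The main obstacle is bookkeeping rather than conceptual: a symmetric monoidal bicategory carries a large body of structure (a tensor pseudofunctor, a unit, three pseudonatural equivalences, several invertible modifications) subject to many Gordon--Power--Street-type axioms, and one must check that each piece, when defined by ``take a companion'', is well formed and that every axiom holds. The way to keep this tractable — and the genuine technical content of the Hansen--Shulman argument \cite{HS} — is to isolate a single coherence lemma: in a fibrant double category the companion/conjoint assignment is functorial and strictly compatible with horizontal composition and with (monoidal) double functors, so that the whole verification factors through that lemma together with the monoidal-double-category axioms, rather than requiring each bicategorical axiom to be fought by hand. A secondary point that must not be overlooked is that the associator, unitors and braiding of $\bD$ are genuine \emph{equivalences}, which is exactly where it is used that $a_{A,B,C},\lambda,\rho$ (and $\beta$) are \emph{iso}morphisms and that companions of isomorphisms are adjoint equivalences.
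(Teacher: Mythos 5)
The paper itself gives no proof of this statement: it is quoted directly from Hansen and Shulman \cite[Theorem 1.1]{HS} and used as a black box, so there is nothing internal to compare your sketch against. Judged against the cited argument, your outline is substantively on target: the tensor pseudofunctor on $\bD$ inherits its constraint 2-cells from $\chi$ and $\mu$, the associator, unitor and braiding 1-cells are companions of the corresponding vertical isomorphisms (and you correctly note that invertibility of those verticals is what upgrades the resulting adjunctions to adjoint equivalences), pseudonaturality comes from the $\lD_1$-level components of the double transformations together with companion ``bending,'' and the crux is a functoriality lemma for the companion and conjoint assignments that makes the coherence check tractable rather than a brute-force verification of the Gordon--Power--Street axioms.

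One correction of substance: you describe the pentagonator and unit modifications of $\bD$ as obtained by bending ``the coherence 2-morphisms of $\lD$ that make $a,\lambda,\rho$ satisfy the pentagon and triangle identities.'' For a pseudomonoid in $\Dbl$ there are no such 2-cells to bend --- the pentagon and triangle hold \emph{strictly} as equations of invertible double transformations, not up to a chosen 2-cell. The pentagonator and unit modifications of $\bD$ are therefore not transported from data already present in $\lD$; they are manufactured from the structure isomorphisms of the companion assignment itself (for instance $\widehat{g\circ f}\cong \widehat g\odot\widehat f$ and $\widehat{1_A}\cong U_A$), applied to a strict equality. This is exactly where the functoriality lemma you flag as ``the genuine technical content'' is doing indispensable work, so the point is consistent with the rest of your sketch, but the source of those modifications should be attributed correctly.
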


\end{document}